\documentclass[a4paper,draft]{amsproc}
\usepackage{amssymb}
\usepackage[hyphens]{url} 
\usepackage{bussproofs}

\theoremstyle{plain}
 \newtheorem{thm}{Theorem}[section]
 \newtheorem{prop}{Proposition}[section]
 \newtheorem{lem}{Lemma}[section]
 \newtheorem{cor}{Corollary}[section]
\theoremstyle{definition}
 \newtheorem{exm}{Example}[section]
 \newtheorem{dfn}{Definition}[section]
 \newtheorem{con}{Convention}[section]
\theoremstyle{remark}
 \newtheorem{rem}{Remark}[section]

 \numberwithin{equation}{section}

\renewcommand{\le}{\leqslant}\renewcommand{\ge}{\geqslant}

\renewcommand{\setminus}{\smallsetminus}
\newcommand{\Ll}{\mathcal{L}_1}
\newcommand{\Var}{\mathit{Var}_1}
\newcommand{\Cost}{\mathit{Con}}
\newcommand{\Pre}{\mathit{Pred}}
\newcommand{\Fun}{\mathit{Fun}}
\newcommand{\Ter}{\mathit{Term}}
\newcommand{\Frml}{\mathit{Form}_1}
\newcommand{\Mod}{\mathbf{M}_1}
\newcommand{\Sat}{\mathit{Sat}}
\newcommand{\Leg}{\mathrm{Leg}}
\newcommand{\adown}{a_{\downarrow}}
\newcommand{\aup}{a_{\uparrow}}
\newcommand{\segue}{\Rightarrow}
\newcommand{\Sub}{\mathrm{Sub}}
\newcommand{\Subg}{\mathrm{Sub}^{\ast}}
\newcommand{\dg}[1]{\#(#1)}
\newcommand{\wt}[1]{\mathrm{w}(#1)}
\newcommand{\LD}{\Ll(M)}
\newcommand{\bd}[1]{\vdash_{#1}}

\title[Sequent-style tableaux for first-order logic]{Sequent-style tableaux for first-order logic: structural analysis, cut admissibility, and the correspondence with $\mathrm{LK}$}

\subjclass[2020]{Primary 03F03; Secondary 03B10, 03F05}

\keywords{Sequent-style tableaux, block calculus, cut admissibility, subformula property, Hintikka sets, proof theory}

\author[Cuconato]{\bfseries Simone Cuconato}

\address{
Department of Physics \\
University of Calabria \\
Arcavacata di Rende (CS) \\
Italy}
\email{simone.cuconato@unical.it}

\begin{document}

{\begin{flushleft}\baselineskip9pt\scriptsize
MANUSCRIPT
\end{flushleft}}
\vspace{18mm} \setcounter{page}{1} \thispagestyle{empty}

\begin{abstract}
We give a self-contained development of the first-order block calculus in unsigned sequent-style notation: each node of the refutation tree carries a finite block $\Pi = \Gamma \cup \lnot[\Delta]$, negation is governed by explicit rules, and a branch closes on a complementary pair of literals. The calculus is Smullyan's, and so in substance are the theorems; what is offered here is a different arrangement of them. The structural properties are established in the order of dependence familiar from G3-style sequent calculi: closure on arbitrary formulae is admissible, weakening and the substitution of parameters are admissible with preservation of the height, every rule is height-preserving invertible, and cut is admissible, the last being derived from the first three rather than conversely. Soundness, completeness under a fair strategy, countable compactness and the countable model property follow, together with a syntactic criterion under which every fair construction terminates. The correspondence is then proved, in both directions and with cut included, with Gentzen's LK in its usual presentation with explicit weakening, which requires lemmas on parameters that set-based Gentzen systems do not need.
\end{abstract}

\maketitle

\section{Introduction}\label{sec:intro}

Anyone who approaches for the first time Gentzen's sequent calculus \cite{Gentzen1935,mancosu2021,troelstra2000} and the analytic tableaux of Beth and Smullyan \cite{Smullyan1968} is struck by a singular fact. The two systems set out from different proof-theoretic intuitions -- the one builds a derivation from the bottom upwards by composing logical rules, the other searches for a model by refutation, decomposing a formula into a tree-shaped structure -- and yet they share the same combinatorial skeleton. The left introduction rules of $\mathrm{LK}$, read backwards, are the expansion rules of a tableau; a tableau branch that closes is the axiom $\alpha \segue \alpha$ of a sequent. This kinship is old -- it is systematized in Smullyan \cite{Smullyan1968}, in Fitting \cite{Fitting1996}, and in Negri and von Plato \cite{NegriVonPlato2001} -- and the calculus we study here is its notational realization: in place of labelling each node of the tree with a single well-formed formula, one labels each node with a finite set of formulae, which we call a \emph{block} and denote by $\Pi$. A block gathers in one list, devoid of internal structure, both the premisses and the negation of the conclusion one intends to refute: the sign $\segue$, rather than separating two ranks of formulae, is reabsorbed into the translation that turns a sequent $\Gamma \segue \Delta$ into the set $\Pi = \Gamma \cup \lnot[\Delta]$.

What we are after is not the notation but its proof-theoretic behaviour. A calculus with no cut rule is not, for that reason alone, a calculus in which cut is superfluous; the content of Gentzen's \emph{Hauptsatz} is the second statement, not the first, and it has to be proved. The spine of the paper is accordingly the structural analysis of Section~\ref{sec:strutturale}: closure on arbitrary complementary pairs is derivable from closure on literals (Lemma~\ref{lem:chiusura-gen}), weakening and the substitution of one parameter for another are admissible without increase of height (Lemmas~\ref{lem:sost} and \ref{lem:wk}), every rule is height-preserving invertible (Lemma~\ref{lem:inv}), and on this basis the rule of cut
\[
\frac{\Pi, \theta \qquad \Pi, \lnot\theta}{\Pi}
\]
is admissible (Theorem~\ref{thm:taglio}), by the double induction on the degree of the cut formula and on the sum of the heights that is standard for $\mathrm{G3}$-style calculi. Once cut is available on the side of blocks, the translation from $\mathrm{LK}$ of Section~\ref{sec:LK} no longer needs to assume its premiss cut-free, and the Hauptsatz for $\mathrm{LK}$ follows by transfer (Corollary~\ref{cor:hauptsatz}). None of these statements is new -- Section~\ref{sec:precedenti} says whose they are and where they may be found -- and the interest of the arrangement, if it has any, lies in the direction in which the implications run.

It is in the logic of predicates that a refutation calculus is tested, since there it must confront the potential infinity of the domain of quantification. Two new families of rules appear: the $\gamma$ rules, universal, which may be applied repeatedly along one and the same branch by instantiating on a constant $\adown$ already present, and the $\delta$ rules, existential, which require a fresh individual constant $\aup$, according to the mechanism by which one chooses a witness in the informal proof of an existential statement. Both families interact with the structural analysis: the persistence of the principal formula in the $\gamma$ rules is what plays the role of left contraction, and the freshness condition on $\aup$ is what makes the substitution lemma the right tool in the quantifier cases of cut admissibility.

Every derivation displayed below is written out in full, in the format fixed by Convention~\ref{conv:display}: the root block at the top, the rule and the parameter recorded beside each line, the entire current block at every node, no formula elided. The reader can check each step mechanically against the rules of Section~\ref{sec:regole}.

\subsection{Relation to earlier work}\label{sec:precedenti}

The calculus studied here is not new, and it is best to say at the outset whose it is.

Tableaux whose points are finite \emph{sets} of formulae rather than single formulae were introduced by Smullyan under the name \emph{block tableaux}: they are developed for propositional logic in \cite[Ch.~XI, \S1]{Smullyan1968} and for quantification theory in the section of the same chapter entitled ``Block tableaux and Gentzen systems for first-order logic'' \cite[Ch.~XI, \S2]{Smullyan1968}. His quantifier rules are ours. The rule $C$ adjoins to a point $S, \gamma$ the instance $\gamma(a)$ while retaining $\gamma$; the rule $D$ replaces $S, \delta$ by $S, \delta(a)$ for a parameter $a$ new to the branch. The translation between a closed block tableau and a derivation in a Gentzen system is his as well, and in the same form: one replaces each point $S$ by the corresponding sequent and reads the resulting tree in the opposite orientation \cite[pp.~107--109]{Smullyan1968}. So is the observation that closure may be restricted to atomic pairs without loss of completeness \cite[p.~110]{Smullyan1968}.

The structural results are likewise his. Smullyan calls a formula $X$ \emph{eliminable} when the existence of closed tableaux for $S \cup \{X\}$ and for $S \cup \{\bar X\}$ implies the existence of one for $S$, which is the admissibility of cut in the formulation of our Theorem~\ref{thm:taglio}; he observes that eliminability follows at once, and non-constructively, from the completeness theorem, and then proves it constructively in an abstract form that covers tableaux and Gentzen systems together \cite[Ch.~XII]{Smullyan1968}. His proof is a double induction on the degree of $X$ and on a weight assigned to closed tableaux, and its four hypotheses $P_1$--$P_4$ are, in order, the closure conditions of an analytic consistency property, invertibility with strictly decreasing weight, weakening with preservation of the weight, and the substitution of one parameter for another in a $\delta$ instance with preservation of the weight. Our Lemmas~\ref{lem:chiusura-gen}, \ref{lem:sost}, \ref{lem:wk} and \ref{lem:inv} are those four conditions, verified directly for the block calculus in unsigned notation. That the Hauptsatz for tableaux yields the Hauptsatz for Gentzen systems by constructive translation, which is our Corollary~\ref{cor:hauptsatz}, is stated on the same page \cite[p.~111]{Smullyan1968}. Compactness and the Skolem--L\"owenheim theorem for first-order tableaux are proved in \cite[Ch.~V, \S4]{Smullyan1968}, Craig's interpolation lemma and Beth's definability theorem in \cite[Ch.~XV]{Smullyan1968}. Presentations of tableaux on sets have circulated widely since; the development of tableau refutation procedures out of sequent calculi is traced by Fitting \cite{Fitting1999}. A block, read as the one-sided sequent $\Pi \segue$, is also closely related to the one-sided calculi of Sch\"utte and Tait \cite{Schuette1977, Tait1968}, from which it differs in that no negation normal form is presupposed and negation is governed by explicit rules, each of them analytic.

What the present paper offers is therefore not a new theorem but a different route to known ones, and a presentation in which three things are done differently.

The first is the order of dependence. Smullyan obtains the invertibility of the rules as a corollary of the Hauptsatz \cite[Ch.~XII, Cor.~3--5]{Smullyan1968}. Here invertibility is proved first and directly, by induction on the height of a closed tableau (Lemma~\ref{lem:inv}), and cut is proved from it together with the height-preserving admissibility of weakening and of substitution. This is the pattern by which the structural properties of $\mathrm{G3}$-style sequent calculi are established \cite{NegriVonPlato2001}, and Section~\ref{sec:G3} argues that it is the pattern that suits a calculus whose structural rules are absorbed exactly as theirs are.

The second is the sequent calculus on the other side of the correspondence. Smullyan's Gentzen systems take sequents to be pairs of finite \emph{sets} and their axioms to carry an arbitrary context, so that weakening and contraction never appear as rules. The correspondence with Gentzen's $\mathrm{LK}$ in its usual presentation, with the axiom $\alpha \segue \alpha$ and explicit rules of weakening and cut, requires the three lemmas on parameters and on the inversion of $\lnot$s proved in Section~\ref{sec:LK}, which have no counterpart on his side.

The third is the notation. The calculus is presented without signs, on blocks $\Pi = \Gamma \cup \lnot[\Delta]$, with explicit rules for each negated compound; this is the notation of \cite{cuconato2025metodi} and of the companion paper \cite{cuconato-proofteoria}, in which the propositional case is related to the sequent calculi $\mathrm{G0T}$ and $\mathrm{G3T}$ and to a natural deduction system with general elimination rules. The termination criterion of Theorem~\ref{thm:terminazione} is stated in those terms; it is a block-calculus form of the classical decidability of the Bernays--Sch\"onfinkel prefix class.

The remainder of the paper is organized as follows. Section~\ref{sec:preliminari} fixes syntax, semantics and the notion of block. Section~\ref{sec:regole} presents the calculus, the closure criterion and the fair strategy. Section~\ref{sec:strutturale} is the structural analysis, and culminates in cut admissibility. Section~\ref{sec:metateoria} proves soundness, completeness, compactness and the countable model property. Section~\ref{sec:analiticita} is devoted to analyticity and to termination. Section~\ref{sec:LK} establishes the correspondence with $\mathrm{LK}$, cut included, and draws the Hauptsatz. Section~\ref{sec:esempi} collects worked derivations, and some concluding remarks close the paper.

\section{Preliminaries}\label{sec:preliminari}

We fix here the language, the semantics and the translation of sequents into blocks. Nothing in this section is new, and the reader familiar with the standard apparatus may consult it only for the notation; two choices are, however, not the default ones, and they are made for reasons that will become visible later: the semantics evaluates sentences in a language expanded by names for the elements of the domain, and one individual constant is set aside once and for all.

\subsection{Syntax}

The first-order language $\Ll$ has a countable set $\Var = \{x_1, x_2, \ldots\}$ of individual variables, a countable set $\Cost = \{a_0, a_1, a_2, \ldots\}$ of individual constants, countable sets $\Pre = \{P^n_i\}$ of predicate constants and $\Fun = \{f^n_i\}$ of function letters, the operators $\lnot, \land, \lor, \rightarrow$, the quantifiers $\forall, \exists$, and parentheses. Terms and well-formed formulae are generated as usual; $\Ter$ is the set of terms, $\Ter^0$ the set of closed terms, $\Frml$ the set of formulae. We reserve $x,y,z$ for variables, $a,b,c$ for constants, $P,Q,R$ for predicates, and $\varphi,\psi,\chi$ as metavariables for formulae. A quantifier binds as far to the right as the parentheses allow, so that $\forall x\,\varphi \rightarrow \psi$ reads $(\forall x\,\varphi)\rightarrow\psi$. Free and bound occurrences, sentences and open formulae are defined in the standard way; we refer to \cite{VanDalen2013} for the details we do not repeat.

\begin{dfn}[Degree and weight]\label{def:grado}
The \emph{degree} $\dg{\varphi}$ of $\varphi$ is the number of occurrences in $\varphi$ of the operators $\lnot,\land,\lor,\rightarrow$ and of the quantifiers $\forall,\exists$. The \emph{weight} $\wt{\varphi}$ is
\[
\wt{\varphi} := 2k(\varphi) + l(\varphi),
\]
where $k(\varphi)$ counts the occurrences of $\land,\lor,\rightarrow,\forall,\exists$ in $\varphi$ and $l(\varphi)$ the occurrences of $\lnot$.
\end{dfn}

Substitution introduces no operator and removes none, so $\dg{\varphi[x/t]} = \dg{\varphi}$ and $\wt{\varphi[x/t]} = \wt{\varphi}$ for every term $t$. This small observation carries most of the inductions below: it is what allows an induction on the degree, or on the weight, to be applied to the \emph{instances} of proper subformulae, which is exactly what the quantifier rules produce. The two measures are not interchangeable, and each is used where it is the right one. The degree is the natural measure for an induction that follows the construction of a formula, and it is the one under which cut is eliminated in Section~\ref{sec:strutturale}. The weight is the measure under which the rules of the calculus are \emph{reductive}, in the sense made precise by Lemma~\ref{lem:peso}; the degree is not, since the rule $\lnot\lor$ carries $\lnot(\varphi\lor\psi)$ into the pair $\lnot\varphi, \lnot\psi$ without diminishing the sum of the degrees.

Substitution $\varphi[x/u]$ is defined by recursion on $\varphi$, distributing over the atomic and molecular cases and acting vacuously on $Qy\,\psi$ when $y \equiv x$. It is \emph{legitimate}, written $\Leg\,\varphi[x/u]$, when no occurrence of $u$ becomes bound after the replacement. Every substitution occurring in the rules of the calculus is on a closed term, and for closed terms legitimacy is automatic: a closed term contains no variable that a quantifier could capture.

\begin{con}[Relational fragment]\label{conv:relazionale}
Throughout the metatheory we assume $\Fun = \varnothing$, so that $\Ter^0 = \Cost$ and every instantiation takes place on a constant. Nothing essential is lost -- a language with functions reduces, preserving satisfiability, to a relational one through the graphs of the functors -- and the canonical model of Section~\ref{sec:metateoria} becomes free of complications. Remark~\ref{oss:herbrand} records what changes when $\Fun \neq \varnothing$.
\end{con}

We reserve, once and for all, the constant $a_0 \in \Cost$; Definition~\ref{def:parametri} explains its role, and no rule of the calculus is ever allowed to introduce it as a fresh constant.

\subsection{Semantics}

Rather than evaluate open formulae under assignments, we evaluate sentences in an expanded language, which is the arrangement that fits a calculus operating on closed formulae and constants.

\begin{dfn}[Model, expansion, valuation]\label{def:modello}
A \emph{model} for $\Ll$ is a pair $M = (D,i)$ with $D$ a non-empty set and $i$ an interpretation function: $i(a) \in D$ for $a \in \Cost$, $i(f^n_j) : D^n \to D$, $i(P^n_j) \subseteq D^n$. The class of models is $\Mod$. Given $M$, the \emph{expanded language} $\LD$ is obtained by adjoining to $\Ll$ a new constant $c_d$ for each $d \in D$, and $M$ is understood to interpret $c_d$ as $d$. The valuation $v = v_M : \Ter^0 \to D$ is defined by $v(a) = i(a)$, $v(c_d) = d$ and $v(f(t_1,\ldots,t_n)) = i(f)(v(t_1),\ldots,v(t_n))$.
\end{dfn}

\begin{dfn}[Truth]\label{def:verita}
For $\varphi$ a sentence of $\LD$, the relation $M \vDash \varphi$ is defined by recursion on $\dg{\varphi}$:
\begin{center}
\begin{tabular}{l @{\quad} c @{\quad} l}
$M \vDash P(t_1,\ldots,t_n)$ & iff & $(v(t_1),\ldots,v(t_n)) \in i(P)$;\\[2pt]
$M \vDash \lnot\varphi$ & iff & $M \nvDash \varphi$;\\[2pt]
$M \vDash \varphi \land \psi$ & iff & $M \vDash \varphi$ and $M \vDash \psi$;\\[2pt]
$M \vDash \varphi \lor \psi$ & iff & $M \vDash \varphi$ or $M \vDash \psi$;\\[2pt]
$M \vDash \varphi \rightarrow \psi$ & iff & $M \nvDash \varphi$ or $M \vDash \psi$;\\[2pt]
$M \vDash \forall x\,\varphi$ & iff & $M \vDash \varphi[x/c_d]$ for every $d \in D$;\\[2pt]
$M \vDash \exists x\,\varphi$ & iff & $M \vDash \varphi[x/c_d]$ for some $d \in D$.
\end{tabular}
\end{center}
\end{dfn}

The recursion is well founded: $\dg{\varphi[x/c_d]} = \dg{\varphi} < \dg{Qx\,\varphi}$. Since every element of $D$ has a name in $\LD$, no arbitrary choice of an auxiliary constant has to be made, and the two invariance lemmas that such a choice would require reduce to the following two statements, both immediate.

\begin{lem}[Coincidence]\label{lem:coincidenza}
If two models $M, M'$ have the same domain and agree on the interpretation of every constant, function letter and predicate occurring in the sentence $\varphi$ of $\LD$, then $M \vDash \varphi$ iff $M' \vDash \varphi$.
\end{lem}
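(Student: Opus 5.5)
We argue by induction on the degree $\dg{\varphi}$, proving simultaneously for all sentences $\varphi$ of $\LD$ that satisfy the hypothesis. One point needs care first. Since $M$ and $M'$ have the same domain $D$, the expanded languages $\LD$ and $\Ll(M')$ coincide, and both models interpret each $c_d$ as $d$. So the hypothesis ``agree on every symbol occurring in $\varphi$'' automatically covers the new constants $c_d$. It is also preserved when we pass from $\varphi$ to any of its immediate components, or to an instance $\psi[x/c_d]$. Indeed the symbols of $\Ll$ occurring in $\psi[x/c_d]$ are among those occurring in $Qx\,\psi$, and the only symbol added is $c_d$, on which the two models agree by construction.

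The first step is a lemma on terms. For every closed term $t$ of $\LD$ all of whose symbols are interpreted alike in $M$ and $M'$, we have $v_M(t) = v_{M'}(t)$. The proof is by induction on $t$, using the three clauses of Definition~\ref{def:modello}: $v(a)=i(a)$, $v(c_d)=d$, and the function clause. Under Convention~\ref{conv:relazionale} only the first two clauses arise, but the general argument costs nothing.

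Now we induct on $\dg{\varphi}$ along the clauses of Definition~\ref{def:verita}.

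\emph{Atomic case.} Here $\varphi = P(t_1,\ldots,t_n)$. The term lemma gives the same tuple $(v(t_1),\ldots,v(t_n))$ in both models, and $i(P)=i'(P)$ by hypothesis. Hence $M \vDash \varphi$ iff $M' \vDash \varphi$.

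\emph{Propositional cases.} For $\lnot$, $\land$, $\lor$ and $\rightarrow$, the components have strictly smaller degree and contain only symbols of $\varphi$. The induction hypothesis applies to each component, and the defining clauses are the same truth functions in both models.

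\emph{Quantifier cases.} For $\varphi = \forall x\,\psi$ or $\exists x\,\psi$, the instances $\psi[x/c_d]$ range over the same $d \in D$ in both models, because the domains coincide. Each instance has degree $\dg{\psi} < \dg{\varphi}$, since substitution preserves degree. By the observation in the first paragraph, each instance satisfies the hypothesis. The induction hypothesis then gives agreement instance by instance, and so agreement on the quantified sentence.

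The only step that is more than bookkeeping is the quantifier case. There one must check that the induction is on degree and not on subformulae, since $\psi[x/c_d]$ is not a subformula of $\varphi$. One must also check that the equal-domain assumption is exactly what makes the two ranges of instances identical and the names $c_d$ mean the same thing in both models. Once these two points are in place, the lemma is immediate, as the text says.
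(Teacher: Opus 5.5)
Your proof is correct and follows the paper's own argument. Both run an induction on $\dg{\varphi}$, treat the atomic case by a structural induction on terms, and handle the molecular cases by the induction hypothesis. In the quantifier case both note that $\psi[x/c_d]$ has the degree of $\psi$ and adds only the symbol $c_d$, on which the two models agree because the domain is common. Your explicit observation that $\Ll(M)$ and $\Ll(M')$ coincide is a detail the paper leaves implicit.
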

\begin{proof}
Induction on $\dg{\varphi}$. In the atomic case the terms of $\varphi$ receive the same values on the two sides, by induction on their structure, and the predicate is interpreted alike. The molecular cases follow from the induction hypothesis applied to the immediate subformulae. For $\forall x\,\psi$: the sentences $\psi[x/c_d]$ have degree $\dg{\psi} < \dg{\forall x \psi}$, and their symbols are those of $\psi$ together with $c_d$, which the two models interpret alike by hypothesis on the common domain; so the induction hypothesis applies for each $d$, and the universal clause transfers. The case $\exists x\,\psi$ is the same with ``for some''.
\end{proof}

\begin{lem}[Semantic substitution]\label{lem:sostituzione}
Let $\varphi$ have at most $x$ free and let $t$ be a closed term of $\LD$. Then $M \vDash \varphi[x/t]$ if and only if $M \vDash \varphi[x/c_{v(t)}]$.
\end{lem}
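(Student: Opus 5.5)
The natural approach is to generalize the statement so that the induction goes through, and then argue by induction on $\dg{\varphi}$. Because the quantifier clauses of Definition~\ref{def:verita} substitute further names $c_d$ into subformulae, the inductive step will meet formulae with one extra free variable already replaced. So I will prove the stronger claim: for every formula $\varphi$ of $\LD$ whose only free variable is $x$, every closed term $t$, and every pair of closed terms $t, t'$ with $v(t) = v(t')$, we have $M \vDash \varphi[x/t]$ iff $M \vDash \varphi[x/t']$. The lemma is then the instance $t' = c_{v(t)}$, since $v(c_{v(t)}) = v(t)$ by Definition~\ref{def:modello}.

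The base case needs a preliminary step, proved by induction on the structure of terms: for a term $s$ whose only variable is $x$, $v(s[x/t]) = v(s[x/t'])$. The variable case is the hypothesis $v(t)=v(t')$. The constant case, including the names $c_d$, is trivial because the substitution is vacuous. The function case follows from the clause for $v$. The atomic case of the main induction then follows at once from the clause for $P(t_1,\ldots,t_n)$. For the clauses with $\lnot$, $\land$, $\lor$ and $\rightarrow$, substitution distributes and the induction hypothesis applies to the immediate subformulae, whose degree is smaller.

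For the quantifier case $\varphi = \forall y\,\psi$ I split on whether $y \equiv x$. If $y \equiv x$, the substitution is vacuous by definition, so both sides are the same sentence and there is nothing to prove. If $y \not\equiv x$, then $\varphi[x/t] = \forall y\,(\psi[x/t])$, and the clause gives $M \vDash \varphi[x/t]$ iff $M \vDash \psi[x/t][y/c_d]$ for every $d$.

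The one delicate point is the commutation $\psi[x/t][y/c_d] = \psi[y/c_d][x/t]$, which holds because $t$ and $c_d$ are closed and $x \not\equiv y$. I would check this by a short induction on $\psi$, treating the binder cases $Qx$ and $Qy$ separately, since there one substitution is vacuous. Once this is in place, $\psi[y/c_d]$ has only $x$ free and degree $\dg{\psi} < \dg{\varphi}$, so the induction hypothesis gives equivalence with $\psi[y/c_d][x/t']$. Commuting back and applying the clause again closes the case, and the case $\exists$ is identical with ``for some''. I expect this commutation step to be the main, if routine, obstacle. It is also precisely why the hypothesis must be generalized to an arbitrary pair $t, t'$ with equal value rather than fixing $t' = c_{v(t)}$.
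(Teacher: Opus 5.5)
Your proposal is correct and follows the paper's proof: induction on $\dg{\varphi}$, an induction on terms for the atomic case, and in the quantifier case the commutation $\psi[x/t][y/c_d] = \psi[y/c_d][x/t]$ followed by the induction hypothesis on $\psi[y/c_d]$. One remark is wrong, though harmless: the generalization to an arbitrary $t'$ with $v(t') = v(t)$ is not needed, because the paper applies the induction hypothesis to $\psi[y/c_d]$ with the same $t$ and the same $c_{v(t)}$, so the statement as given is already strong enough to carry the induction.
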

\begin{proof}
Induction on $\dg{\varphi}$. If $\varphi$ is atomic, the two sentences differ only in that $t$ occupies the positions occupied by $c_{v(t)}$, and both terms denote $v(t)$; an induction on the structure of the terms of $\varphi$ shows that corresponding terms receive equal values, whence the two sides have the same truth value. The molecular cases are immediate, substitution distributing over the immediate subformulae. If $\varphi = Qy\,\psi$ with $y \equiv x$ the substitution is vacuous on both sides. If $y \not\equiv x$, then $\varphi[x/t] = Qy\,(\psi[x/t])$, and for each $d \in D$ the substitutions on the distinct variables $x,y$ commute, $t$ and $c_d$ being closed; the induction hypothesis applied to $\psi[y/c_d]$, of degree $\dg{\psi} < \dg{\varphi}$, gives $M \vDash \psi[y/c_d][x/t]$ iff $M \vDash \psi[y/c_d][x/c_{v(t)}]$, and quantification over $d$ yields the claim.
\end{proof}

Logical truth $\Vdash_{\Mod} \varphi$, logical consequence $\Gamma \Vdash_{\Mod} \varphi$ and satisfiability $\Sat_{\Mod}(\Gamma)$ are defined as usual for sets of sentences of $\Ll$. As in the propositional case, $\Gamma \Vdash_{\Mod} \varphi$ holds if and only if $\Gamma \cup \{\lnot\varphi\}$ is unsatisfiable; the whole tableau method turns on this hinge.

\subsection{Sequents and blocks}\label{sec:sequenti}

A \emph{sequent} is an expression $\Gamma \segue \Delta$ with $\Gamma, \Delta$ finite, possibly empty, sets of sentences of $\Ll$; the arrow is metalinguistic. The sequent is \emph{valid} when $\bigwedge\Gamma \Vdash_{\Mod} \bigvee\Delta$. Since $M \nvDash \bigvee\Delta$ iff $M \vDash \lnot\psi$ for every $\psi \in \Delta$, validity is equivalent to the unsatisfiability of the union of the antecedent with the negated succedents. We therefore set
\[
\Pi := \Gamma \cup \lnot[\Delta] = \{\varphi_1,\ldots,\varphi_n, \lnot\psi_1,\ldots,\lnot\psi_m\},
\qquad \lnot[\Delta] := \{\lnot\psi \mid \psi \in \Delta\},
\]
and call $\Pi$ the \emph{block associated} with $\Gamma \segue \Delta$; thus
\begin{equation}\label{eq:cardine}
\Gamma \segue \Delta \text{ is valid} \quad\text{iff}\quad \text{not } \Sat_{\Mod}(\Pi).
\end{equation}

\begin{rem}[The translation is not injective]\label{oss:non-iniettiva}
Distinct sequents may share a block: $\{\lnot\psi\}$ is the block of $\lnot\psi \segue$ and equally of $\segue \psi$. Nothing is lost, since two sequents with the same block are valid or invalid together by \eqref{eq:cardine}; but the collapse has to be kept in mind in Section~\ref{sec:LK}, where the passage from $\Pi \segue$ back to $\Gamma \segue \Delta$ requires a weakening step precisely when a formula of $\lnot[\Delta]$ happens to belong to $\Gamma$ as well.
\end{rem}

\section{The block calculus}\label{sec:regole}

The calculus operates on finite sets of formulae, decomposing them until a contradiction surfaces on every branch or until a branch resists decomposition. We describe the data first, then the rules, and last the discipline of application that makes the search complete.

\subsection{Blocks, branches, closure}

A \emph{tableau} for a block $\Pi$ is a rooted tree, finite or infinite, whose root is labelled by $\Pi$ and whose successor nodes are obtained by applying one of the rules below to a formula of the block of the parent node. Unlike the classical presentation of Smullyan, in which each node carries a single formula, here each node carries the entire current block: a branch does not \emph{accumulate} formulae as it descends, it \emph{transports} them, the block of the parent being replaced by one or two child blocks according as the rule is conjunctive or disjunctive.

A \emph{literal} is an atomic formula or the negation of an atomic formula.

\begin{dfn}[Closure]\label{def:chiusura}
A block $\Pi$ is \emph{closed} if there is an atomic formula $\theta$ with $\theta \in \Pi$ and $\lnot\theta \in \Pi$; the branch it terminates is marked $\times$ and no rule is applied to it. A tableau is \emph{closed} if every branch of it is closed. We write $\bd{h}\Pi$ to mean that there is a closed tableau for $\Pi$ of height at most $h$, and $\vdash \Pi$ to mean that $\bd{h}\Pi$ for some $h$.
\end{dfn}

Closure demands the syntactic identity of the argument terms: the calculus does not resort to unification, but leaves to the operator the choice of the constant with which to instantiate the universal formulae, so that the resulting atoms coincide literally with their own negations. Restricting closure to \emph{literals}, rather than admitting complementary pairs of arbitrary formulae, is what makes the invertibility of the rules height-preserving, and hence what makes the proof of Theorem~\ref{thm:taglio} possible; nothing is lost, since closure on arbitrary pairs is admissible (Lemma~\ref{lem:chiusura-gen}). The same choice is made, and for the same reason, on the side of sequent calculi. Gentzen's own $\mathrm{LK}$ admits basic sequents $\alpha \segue \alpha$ with $\alpha$ arbitrary, whereas the presentations designed to carry the elimination of cut restrict them to atoms; that the construction depends on the restriction is stressed by Franks \cite[Ch.~3]{Franks2026}, and the block calculus inherits the dependence intact.

\begin{prop}[Soundness of the closure criterion]\label{prop:chiusura}
A closed block is not satisfiable.
\end{prop}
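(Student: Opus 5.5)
The plan is to argue directly from Definition~\ref{def:chiusura} and the truth clauses of Definition~\ref{def:verita}, by contradiction. I suppose that $\Pi$ is closed and that some model $M \in \Mod$ satisfies it. Here ``satisfies'' is read as in the definition of $\Sat_{\Mod}$: $M \vDash \chi$ for every $\chi \in \Pi$.

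By closure there is an atomic formula $\theta$ with $\theta \in \Pi$ and $\lnot\theta \in \Pi$. Hence $M \vDash \theta$ and $M \vDash \lnot\theta$. The clause for negation turns the second into $M \nvDash \theta$, which contradicts the first. So no model satisfies $\Pi$, that is, not $\Sat_{\Mod}(\Pi)$.

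The only point that needs a moment's care is that the truth relation is defined for \emph{sentences}. The argument therefore requires $\theta$ and $\lnot\theta$ to be closed. I would note that blocks are, by construction, sets of sentences. They are built from sequents of sentences, and the rules of Section~\ref{sec:regole} only instantiate on closed terms. Satisfiability is in any case defined only for sets of sentences, so the statement is understood for blocks of sentences.

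Once this is noted, the proof is immediate. Atomicity of $\theta$ plays no role: the same argument shows that a block containing any complementary pair $\varphi, \lnot\varphi$ is unsatisfiable. I would remark on this in passing, since it is the semantic counterpart of Lemma~\ref{lem:chiusura-gen}. I do not expect a genuine obstacle in this proof.
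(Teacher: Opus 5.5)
Your proof is correct and is the paper's own argument: if $\theta, \lnot\theta \in \Pi$, the negation clause of Definition~\ref{def:verita} means no model can satisfy both. Your side remarks are accurate. Blocks are sets of sentences, which the paper also relies on. The argument does not use atomicity, and so it applies to any complementary pair.
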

\begin{proof}
If $\theta, \lnot\theta \in \Pi$, no $M$ satisfies both, since $M \vDash \lnot\theta$ iff $M \nvDash \theta$.
\end{proof}

\subsection{Propositional rules}\label{sec:alfabeta}

Following Smullyan's classification, the propositional rules divide into conjunctive rules, which produce a single child, and disjunctive rules, which bifurcate.

\medskip
\noindent -- \textit{Rules of type $\alpha$} (conjunctive)
\[
\frac{\Pi, \lnot\lnot \varphi}{\Pi, \varphi} \; \text{$\lnot\lnot$}
\qquad
\frac{\Pi, \varphi \land \psi}{\Pi, \varphi, \psi} \; \text{$\land$}
\qquad
\frac{\Pi, \lnot(\varphi \lor \psi)}{\Pi, \lnot\varphi, \lnot\psi} \; \text{$\lnot\lor$}
\qquad
\frac{\Pi, \lnot(\varphi \rightarrow \psi)}{\Pi, \varphi, \lnot\psi} \; \text{$\lnot\rightarrow$}
\]

\medskip
\noindent -- \textit{Rules of type $\beta$} (disjunctive)
\[
\frac{\Pi, \lnot(\varphi \land \psi)}{\Pi, \lnot\varphi \mid \Pi, \lnot\psi} \; \text{$\lnot\land$}
\qquad
\frac{\Pi, \varphi \lor \psi}{\Pi, \varphi \mid \Pi, \psi} \; \text{$\lor$}
\qquad
\frac{\Pi, \varphi \rightarrow \psi}{\Pi, \lnot\varphi \mid \Pi, \psi} \; \text{$\rightarrow$}
\]

In each rule the formula displayed above the line -- the \emph{principal} formula -- is removed from the block and replaced by the formulae below the line, the rest of the block being inherited unchanged. In the $\alpha$ rules a single child receives both resulting formulae; in the $\beta$ rules two children receive respectively the one and the other.

\subsection{Quantifier rules}\label{sec:gammadelta}

A $\gamma$ formula asserts something about \emph{each} element of the domain, and to exploit it one must be able to instantiate it on an arbitrary constant, and more than once; a $\delta$ formula asserts the existence of \emph{one} element with a property, and to instantiate it is to name a constant for such an element, with the constraint that the name be fresh. We write $\adown$ for a constant already among the parameters of the branch and $\aup$ for one fresh for it.

\begin{dfn}[Parameters]\label{def:parametri}
Let $\mathcal R$ be a branch of a tableau. The set of \emph{parameters} of $\mathcal R$ is
\[
\mathrm{Par}(\mathcal R) := \{a \in \Cost \mid a \text{ occurs in some block of } \mathcal R\} \cup \{a_0\},
\]
where $a_0$ is the reserved constant of Section~\ref{sec:preliminari}. Two features of this definition matter later. It is \emph{monotone}: a branch that grows never loses a parameter, so that the notion is stable along infinite branches. And it is never empty, which reflects on the syntactic side the requirement that the domains of the semantics be non-empty; the $\gamma$ rules, as the preservation lemma will show, are sound for any choice of instantiating constant, so no harm comes of always having $a_0$ available.
\end{dfn}

\begin{dfn}[Fresh constant]\label{def:nuova}
Let a $\delta$ rule be applied to a block $\Pi$ on a branch $\mathcal R$. A constant $a$ is \emph{fresh for $\mathcal R$}, written $\aup$, if $a \neq a_0$, if $a \notin \mathrm{Par}(\mathcal R)$, and if $a$ has not already been used as the fresh constant of an earlier $\delta$ application on $\mathcal R$.
\end{dfn}

The last clause is not redundant, although it looks so. When $x$ does not occur free in $\varphi$, the instance $\varphi[x/\aup]$ is $\varphi$ itself, and the constant introduced by the rule occurs in no block of the branch; without the clause it would be available again, and a later $\delta$ application could reuse it. It is the only case in which a $\delta$ application fails to enlarge the set of parameters, and Proposition~\ref{prop:crescita} is stated with it in view.

\medskip
\noindent -- \textit{Rules of type $\gamma$} (universal; $\adown \in \mathrm{Par}(\mathcal R)$)
\[
\frac{\Pi, \forall x\,\varphi}{\Pi, \forall x\,\varphi, \; \varphi[x/\adown]} \; \text{$\forall$}
\qquad
\frac{\Pi, \lnot \exists x\,\varphi}{\Pi, \lnot \exists x\,\varphi, \; \lnot\varphi[x/\adown]} \; \text{$\lnot\exists$}
\]

\medskip
\noindent -- \textit{Rules of type $\delta$} (existential; $\aup$ fresh for $\mathcal R$)
\[
\frac{\Pi, \exists x\,\varphi}{\Pi, \varphi[x/\aup]} \; \text{$\exists$}
\qquad
\frac{\Pi, \lnot \forall x\,\varphi}{\Pi, \lnot\varphi[x/\aup]} \; \text{$\lnot\forall$}
\]

Two features separate the families. The principal formula of a $\gamma$ rule is \emph{not consumed}: it survives in the child block alongside its own instance, because it may have to be instantiated again on a different constant later in the construction. The principal formula of a $\delta$ rule is consumed, the fresh constant discharging once and for all the task of representing a witness. And whereas in the $\alpha$ and $\beta$ rules the choice of which formula to decompose is immaterial for soundness, here the order matters for the success of the search: if a branch is to close on a given constant, the $\delta$ rule that introduces it must precede every $\gamma$ instantiation that will draw on it, with $\adown = \aup$, so that the resulting atoms coincide literally.

\begin{rem}[Two readings of freshness]\label{oss:freschezza}
The condition of Definition~\ref{def:nuova} is syntactic: $\aup$ does not occur on the branch. In the proof of completeness it is this reading that operates, since it guarantees that each $\delta$ application introduces a distinct witness. In the proof of soundness the semantic counterpart intervenes: one extends the interpretation by setting $\aup$ equal to a witness $d \in D$, and the coincidence lemma ensures that the extension does not alter the truth value of the context, in which $\aup$ does not occur. A new name, and a witness free to be interpreted: two faces of one condition, and it is useful to keep them distinct.
\end{rem}

\begin{rem}[Necessity of the freshness condition]\label{oss:novita}
The restriction on the $\delta$ rules separates a sound calculus from an unsound one. Take the block
\[
\Pi = \{\exists x\,P(x),\ \lnot\forall x\,P(x)\}
\]
and apply $\exists$ with $\aup = b$, obtaining $P(b)$. Were one then to apply $\lnot\forall$ reusing $b$, one would obtain $\lnot P(b)$, and the block would close. But $\Pi$ is satisfiable, as a domain with two elements only one of which satisfies $P$ shows, so the closure would be spurious. The obligation to choose a second constant $c \neq b$ leaves the block $\{P(b), \lnot P(c)\}$ open, as it must be; see Example~\ref{es:aperto}.
\end{rem}

We record here, once and for all, the sense in which the rules diminish what they act on.

\begin{lem}[Reduction of the weight]\label{lem:peso}
In every rule of the calculus, each formula introduced below the line -- excluding the principal formula that a $\gamma$ rule retains -- has weight strictly smaller than the principal formula. Moreover, for the rules $\alpha$, $\beta$ and $\delta$ the sum of the weights of the formulae of each child block is strictly smaller than the corresponding sum for the parent block.
\end{lem}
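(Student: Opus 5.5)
The argument is a direct computation, rule by rule, from Definition~\ref{def:grado} and the observation that substitution preserves weight, $\wt{\varphi[x/t]} = \wt{\varphi}$. Throughout, write $k$ and $l$ for the counts of binary connectives and quantifiers, respectively of negations, so that $\wt{\varphi} = 2k(\varphi)+l(\varphi)$. Since $k$ and $l$ are additive over immediate subformulae, we have
\[
\wt{\varphi \circ \psi} = \wt{\varphi}+\wt{\psi}+2 \quad (\circ \in \{\land,\lor,\rightarrow\}),
\]
and $\wt{\lnot\varphi} = \wt{\varphi}+1$, $\wt{Qx\,\varphi} = \wt{\varphi}+2$.

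\textbf{First claim.} I check each rule against these identities.
\begin{itemize}
\item For $\lnot\lnot$: the child formula $\varphi$ has weight $\wt{\lnot\lnot\varphi}-2$.
\item For $\land$ and $\lor$: each component has weight at most $\wt{\varphi\circ\psi}-2$.
\item For $\lnot\lor$ and $\lnot\land$: $\lnot\varphi$ has weight $\wt{\varphi}+1 \le \wt{\lnot(\varphi\circ\psi)}-2$, and likewise for $\lnot\psi$.
\item For $\lnot\rightarrow$ and $\rightarrow$: the formulae $\varphi$, $\lnot\varphi$, $\psi$, $\lnot\psi$ are all bounded the same way.
\item For the quantifier rules: $\varphi[x/a]$ has weight $\wt{Qx\,\varphi}-2$. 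Also $\lnot\varphi[x/a]$ has weight $\wt{\varphi}+1$, which is strictly below $\wt{\lnot Qx\,\varphi} = \wt{\varphi}+3$.
\end{itemize}
This is where substitution invariance is used. It is the only point that is not pure arithmetic, and the paper has already recorded it.

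\textbf{Second claim.} Blocks are sets, so the sum of weights over a child block must be computed with care. The child is $\Pi \cup \{\text{new formulae}\}$, with the principal formula removed. A new formula may already belong to $\Pi$, in which case it collapses. The principal formula may also lie in $\Pi$ itself, if $\Pi, \chi$ is read with $\chi \in \Pi$.

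I fix the reading that the principal formula $\chi$ is not in $\Pi$, since it is removed and $\Pi,\chi$ denotes a disjoint union. Then
\[
\sum \text{child} \le \sum \Pi + \sum \text{new},
\]
because set union can only merge duplicates and so only lowers the total. It therefore suffices to show that the new formulae together weigh strictly less than $\wt{\chi}$. Taking the two-formula $\alpha$ rules in turn:
\begin{itemize}
\item $\land$: $\wt{\varphi}+\wt{\psi} = \wt{\chi}-2$.
\item $\lnot\lor$: $\wt{\varphi}+1+\wt{\psi}+1 = \wt{\chi}-1$.
\item $\lnot\rightarrow$: $\wt{\varphi}+\wt{\psi}+1 = \wt{\chi}-1$.
\end{itemize}
For $\lnot\lnot$, for the $\beta$ rules (one new formula per child), and for the $\delta$ rules, the first claim already gives the bound.

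\textbf{Main obstacle.} The step I expect to need care is the $\lnot\lor$ case. It is exactly where the degree fails, as the paper notes, and it is the reason negation is counted with coefficient $1$ and the other operators with $2$. The calculation above shows the margin is only $1$, which suffices. I will also state explicitly why the $\gamma$ rules are excluded from the second claim: the retained principal formula keeps the sum from decreasing.
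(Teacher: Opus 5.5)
Your proof is correct and follows the paper's own route: a rule-by-rule computation from $\wt{\lnot\varphi}=\wt{\varphi}+1$, $\wt{\varphi\star\psi}=\wt{\varphi}+\wt{\psi}+2$, $\wt{Qx\,\varphi}=\wt{\varphi}+2$ and the invariance of weight under substitution. Your bound on the child's total (the weight of the context plus that of the new formulae) makes explicit the collapse of duplicates that the paper leaves implicit in ``the context being common''.

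There are two harmless slips. First, for $\lnot\rightarrow$ the new formulae weigh $\wt{\varphi}+\wt{\psi}+1=\wt{\chi}-2$, not $\wt{\chi}-1$, so $\lnot\lor$ is the only rule with margin $1$. Second, in the rule $\rightarrow$ the component $\lnot\varphi$ is bounded only by $\wt{\chi}-1$, which is the paper's ``tightest case'', and not by $\wt{\chi}-2$ as ``bounded the same way'' suggests.
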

\begin{proof}
Both claims are read off a computation of the weight on the principal formulae, the context being common to parent and children. Directly from Definition~\ref{def:grado},
\[
\wt{\lnot\lnot\varphi} = \wt{\varphi}+2, \qquad
\wt{\varphi\star\psi} = \wt{\varphi}+\wt{\psi}+2,
\]
\[
\wt{\lnot(\varphi\star\psi)} = \wt{\varphi}+\wt{\psi}+3,
\]
for $\star \in \{\land,\lor,\rightarrow\}$, and, for $Q \in \{\forall,\exists\}$,
\[
\wt{Qx\,\varphi} = \wt{\varphi}+2, \qquad \wt{\lnot Qx\,\varphi} = \wt{\varphi}+3,
\]
while an instance $\varphi[x/t]$ has the weight of $\varphi$.

The first claim follows: the rule $\rightarrow$, to take the tightest case, introduces $\lnot\varphi$ of weight $\wt{\varphi}+1$ and $\psi$ of weight $\wt{\psi}$, both strictly below $\wt{\varphi}+\wt{\psi}+2$. The second is the same computation summed: the $\alpha$ rules diminish the total by $2$, except $\lnot\lor$, which diminishes it by $1$; each $\beta$ rule replaces the principal formula by a single formula of strictly smaller weight; and each $\delta$ rule diminishes the total by $2$. The exclusion of the $\gamma$ rules is not an oversight: their child block contains the principal formula together with a new instance, so the total weight \emph{increases}, by $\wt{\varphi}$ in the case of $\forall$ and by $\wt{\varphi}+1$ in the case of $\lnot\exists$; it is precisely this failure of reductivity that makes the first-order construction liable not to terminate.
\end{proof}

\subsection{Termination of applications, and the fair strategy}

A rule may be applied to a block without changing it, and a branch on which only such applications remain has nothing further to yield. The following notion separates the two situations.

\begin{dfn}[Productive application, terminated block]\label{def:produttiva}
An application of a rule is \emph{productive} if at least one of the child blocks differs from the parent. The rules $\alpha, \beta, \delta$ remove their principal formula, so every application of them is productive; an application of a $\gamma$ rule is unproductive precisely when the instance it would introduce already belongs to the block. A block is \emph{terminated} when it is closed, or when no productive application to it exists, the $\gamma$ applications considered being those on the parameters of the branch; in the second case it is \emph{completed open}, and marked $\odot$.
\end{dfn}

In the propositional case, by Lemma~\ref{lem:peso}, every rule strictly decreases the total weight of the block, and the tableau is finite. In the predicate case the $\gamma$ rule is not reductive: the universal formula survives alongside its instance, and the block may grow. The construction may therefore fail to terminate, and by the undecidability of first-order validity \cite{Church1936,Turing1936} no procedure can decide in every case, in finite time, whether a block is closable. What holds -- and this is what completeness establishes -- is that an unsatisfiable block is refuted by every tableau built according to a fair strategy.

\begin{dfn}[Fair strategy]\label{def:equa}
A construction is \emph{fair} if for every branch $\mathcal R$ that is never closed:
\begin{enumerate}
\item[\textup{(F1)}] every formula of type $\alpha$, $\beta$ or $\delta$ occurring in a block of $\mathcal R$ is the principal formula of an application at some finite stage of $\mathcal R$;
\item[\textup{(F2)}] for every formula of type $\gamma$ occurring in a block of $\mathcal R$ and every $a \in \mathrm{Par}(\mathcal R)$, the corresponding instance on $a$ is introduced at some finite stage of $\mathcal R$.
\end{enumerate}
\end{dfn}

Condition (F2) is infinitary: each $\delta$ application may enlarge the set of parameters, and every $\gamma$ formula has to be instantiated on each of them. That such a strategy exists is therefore not immediate, and the obligations to be discharged cannot be enumerated in advance, since they are created as the tree grows. The following construction organizes them explicitly.

\begin{prop}[Existence of a fair strategy]\label{prop:esistenza-equa}
There is an effective procedure that, applied to any initial block $\Pi$, constructs a fair tableau for $\Pi$.
\end{prop}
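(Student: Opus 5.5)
The procedure will be a round-robin construction carried out independently along each branch, with a queue of pending tasks attached to every node. Since the blocks transport formulae rather than accumulate them, and $\alpha$, $\beta$, $\delta$ rules consume their principal formula, it is convenient to state the obligations in terms of a \emph{history}. Each open node $n$ carries a finite list $T(n)$ of tasks of two kinds. A task of the first kind is a pair $(\varphi)$ with $\varphi$ of type $\alpha$, $\beta$ or $\delta$. A task of the second kind is a pair $(\gamma, a)$ with $\gamma$ a formula of type $\gamma$ and $a$ a constant. The root list is obtained by fixing an enumeration of the formulae of $\Pi$: a task $(\varphi)$ for each non-$\gamma$, non-literal formula, and a task $(\gamma,a)$ for each $\gamma \in \Pi$ and each $a$ among the parameters of $\Pi$ together with $a_0$.

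The construction proceeds in stages. At stage $s$ every open, non-closed leaf $n$ takes the first task of $T(n)$, executes it, and its children inherit the rest of the list; the task is dropped if its formula no longer belongs to the block. Executing $(\varphi)$ means applying the corresponding rule to $\varphi$, choosing for a $\delta$ rule the least-indexed constant of $\Cost$ that is fresh in the sense of Definition~\ref{def:nuova}. This is decidable, since the branch so far is finite and records its earlier $\delta$ choices. Executing $(\gamma,a)$ means applying the $\gamma$ rule with $\adown = a$, or doing nothing if the instance is already present. We also re-append $(\gamma,a)$ at the end of the list, which is harmless but not needed. Each child then appends new tasks at the end of its list: $(\psi)$ for every newly introduced non-literal, non-$\gamma$ formula $\psi$; $(\gamma', b)$ for every newly introduced $\gamma$ formula $\gamma'$ and every current parameter $b$; and, when a $\delta$ application introduced a fresh $\aup$, $(\gamma, \aup)$ for every $\gamma$ formula of the new block. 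Since $\gamma$ formulae are never consumed (no rule other than a $\gamma$ rule on itself has them as principal), a $\gamma$ formula present at some node stays in every later block of the branch. Closed leaves are simply left alone, and leaves with empty task list are completed open. The procedure is effective because at each stage only finitely many finite leaves are processed, and every choice is computable from the finite history.

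To verify fairness, let $\mathcal R$ be a never-closed branch. I would first check (F1). An $\alpha$, $\beta$ or $\delta$ formula occurring on $\mathcal R$ entered either at the root or when it was introduced, and at that point a task for it was placed in the queue. Because the queue is processed first-in-first-out and each stage removes exactly one task from the front, after finitely many stages that task reaches the front. Either the formula is then applied to, or it has already been consumed earlier, which means it was already principal in an application; the latter can only happen through its own task, since formulae in a set are identified.

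For (F2), take a $\gamma$ formula $\gamma$ on $\mathcal R$ and some $a \in \mathrm{Par}(\mathcal R)$. By monotonicity, $a$ occurs on $\mathcal R$ from some stage onward, or is $a_0$. The step I expect to be the main obstacle is showing that the pair $(\gamma,a)$ was actually enqueued, whichever of $\gamma$ and $a$ appeared later. If $\gamma$ appeared after $a$, the task was added when $\gamma$ appeared, because $a$ was then a current parameter. If $a$ appeared after $\gamma$, then $a$ is either an initial parameter (already covered), or was introduced by a $\delta$ rule, which triggers the enqueuing. Parameters can enter only through the root or through $\delta$ rules: $\alpha$, $\beta$ and $\gamma$ rules introduce only subformula instances on constants already present, and under Convention~\ref{conv:relazionale} the constants of an instance $\varphi[x/\adown]$ are those of $\varphi$ plus $\adown$. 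Once enqueued, the first-in-first-out argument above shows the instance is eventually introduced.
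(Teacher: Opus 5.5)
Your proposal is correct and follows essentially the paper's own proof. Both use per-branch first-in-first-out queues of obligations (occurrences of $\alpha$, $\beta$, $\delta$ formulae and pairs $\langle\gamma,a\rangle$), copied at bifurcations, with each pair appended at the first stage at which both its components are present, and obtain (F1) and (F2) because every pending obligation reaches the head after finitely many stages.

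Two small repairs are needed. First, append $(\gamma,\aup)$ only when the $\delta$ application actually enlarges $\mathrm{Par}(\mathcal R)$: in the vacuous case discussed after Definition~\ref{def:nuova}, $\aup$ occurs in no block, so a $\gamma$ step on it would violate the side condition $\adown \in \mathrm{Par}(\mathcal R)$. Second, drop the optional re-appending of $(\gamma,a)$. It does not endanger fairness, but on $\{\forall x\,(P(x)\land Q(x))\}$ it re-introduces the decomposed instance $P(a_0)\land Q(a_0)$ indefinitely, which loses the non-repeating property that the paper's proof records and Theorem~\ref{thm:terminazione} relies on.
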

\begin{proof}
Call an \emph{obligation} on a branch $\mathcal R$ either an occurrence of a formula of type $\alpha$, $\beta$ or $\delta$ in a block of $\mathcal R$, or a pair $\langle \psi, a \rangle$ with $\psi$ of type $\gamma$ occurring in a block of $\mathcal R$ and $a \in \mathrm{Par}(\mathcal R)$. Each open branch carries a queue of pending obligations, processed first in, first out; at a bifurcation the queue is copied into both children.

The procedure runs in stages. At stage $0$ the queue of the single branch is initialized with the obligations generated by $\Pi$, in a fixed order determined by an effective enumeration of formulae and constants. At stage $n+1$, each open branch $\mathcal R$ removes the obligation at the head of its queue and discharges it: a formula of type $\alpha$, $\beta$ or $\delta$ is decomposed by the corresponding rule, and a pair $\langle \psi, a \rangle$ produces the $\gamma$ instance of $\psi$ on $a$, if that instance is not already present. Whatever new obligations this step creates -- the components of the decomposed formula, and, when a $\delta$ application has enlarged $\mathrm{Par}(\mathcal R)$ by a constant $b$, the pairs $\langle \psi, b\rangle$ for every $\gamma$ formula $\psi$ currently on $\mathcal R$ -- are appended to the tail of the queue of each branch on which they arise, again in the fixed order. Only finitely many are appended at each stage, since blocks and branches are finite at every finite stage. No pair $\langle \psi, a\rangle$ is appended twice on the same branch: a formula of type $\gamma$ persists once it has appeared, being never the principal formula of a rule that removes it, and $\mathrm{Par}(\mathcal R)$ only grows, so the pair has a first stage at which both of its components are present, and it is appended there and nowhere else.

An obligation appended to a queue at stage $n$ is preceded in that queue by finitely many others, and one obligation is removed from each open branch at each stage; hence it reaches the head, and is discharged, after finitely many stages. This gives (F1) at once. It gives (F2) as well: if $\psi$ is of type $\gamma$ on $\mathcal R$ and $a \in \mathrm{Par}(\mathcal R)$, then either $a$ is a parameter already when $\psi$ appears, in which case the pair $\langle\psi,a\rangle$ is appended then, or $a$ enters $\mathrm{Par}(\mathcal R)$ at a later stage, in which case the pair is appended at that stage; in both cases it is discharged after finitely many further stages, and the $\gamma$ instance of $\psi$ on $a$ is introduced. Note that the reserved parameter $a_0$ belongs to $\mathrm{Par}(\mathcal R)$ from the outset, so its obligations are among those initialized at stage $0$.
\end{proof}

When closure is expected within a few steps one adopts, in practice, a priority strategy: the $\alpha$ and $\delta$ nodes first, then the $\gamma$ nodes, and last the $\beta$ nodes \cite{Palladino2002}. It is a particular case of Definition~\ref{def:equa}, oriented towards efficiency.

\subsection{Absorption of the structural rules}

The calculus contains no explicit structural rule. This does not mean that the structural operations are absent: they are absorbed into the data structure and into the closure criterion.

\begin{prop}[Structural absorption]\label{prop:strutturali}
In the block calculus, contraction is absorbed by the set-theoretic nature of the blocks, which record no multiplicities; weakening is absorbed by the closure criterion, since a block closes as soon as it contains a complementary pair of literals, independently of what else it contains; and left contraction on universal formulae is realized by the persistence of the principal formula in the $\gamma$ rules.
\end{prop}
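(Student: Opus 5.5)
The proposition has three clauses, and I would take them one at a time. Each is a direct check against the definitions of block, closure (Definition~\ref{def:chiusura}) and the rules of Sections~\ref{sec:alfabeta} and \ref{sec:gammadelta}. The full height-preserving admissibility results belong to Section~\ref{sec:strutturale}. Here I only need to show that each structural operation of a sequent calculus has no separate counterpart to perform, because the data structure or an existing rule already does its work.

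For contraction, a block is by definition a finite \emph{set} $\Pi = \Gamma\cup\lnot[\Delta]$. The result of "contracting" two occurrences of $\varphi$ in $\Pi,\varphi,\varphi$ is therefore literally the same block as $\Pi,\varphi$. Any rule whose output would duplicate a formula already present yields the same set: for instance, $\land$ applied to $\varphi\land\varphi$, or a component already in $\Pi$. So no rule is needed to merge copies, and every tableau for the uncontracted block is already a tableau for the contracted one. The one thing to verify is that no rule is sensitive to multiplicities. This is immediate, since every rule is stated as the replacement of a principal formula inside a set, with the context inherited unchanged.

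For weakening, I would argue as follows. Closure of a block depends only on the existence of some atomic $\theta$ with $\theta,\lnot\theta\in\Pi$. This condition is monotone in $\Pi$: if $\Pi\subseteq\Pi'$ and $\Pi$ is closed, then $\Pi'$ is closed. So an axiom of the form $\Pi,\theta,\lnot\theta$ carries an arbitrary context, and this is the sense in which weakening is "absorbed by the closure criterion". I would state explicitly that what this clause asserts is the absorption at the leaves. Propagating a weakening through a whole closed tableau needs an induction on height, together with a renaming of fresh constants in the $\delta$ steps so that they avoid the added formulae. That induction is Lemma~\ref{lem:wk} and relies on Lemma~\ref{lem:sost}, so I would point forward to it rather than redo it here.

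For left contraction on universal formulae, I would compare with the sequent rule $\forall$L. In $\mathrm{G3}$-style calculi it keeps $\forall x\,\varphi$ in the premiss precisely so that contraction on it becomes admissible; the alternative is an explicit contraction that copies $\forall x\,\varphi$ before each instantiation. The $\gamma$ rules of the block calculus retain the principal formula, so repeated instantiation on $\adown_1,\adown_2,\dots$ needs no copy to be made. The same holds for $\lnot\exists$. The main obstacle is one of precision rather than difficulty. The proposition is partly interpretive, so the proof must make clear which claims are settled by definition here and which, in particular the full weakening over entire tableaux, are discharged by the lemmas of Section~\ref{sec:strutturale}.
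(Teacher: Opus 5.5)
Your proposal is correct and follows the paper's argument. Contraction is absorbed because $\Pi \cup \{\varphi,\varphi\} = \Pi \cup \{\varphi\}$, weakening at the leaves because closure is monotone in the block, and left contraction on universals because the $\gamma$ rules retain the principal formula that $\forall$s would consume; you also defer admissibility over whole tableaux to Section~\ref{sec:strutturale}, as the paper does right after the proposition. One small slip in that forward pointer: Lemma~\ref{lem:wk} is proved by adjoining the added formulae to every block after renaming witnesses via Lemma~\ref{lem:rinomina-tab}, not by an induction on height that rests on Lemma~\ref{lem:sost}.
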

\begin{proof}
The first two claims are immediate from Definitions~\ref{def:chiusura} and \ref{def:parametri} and from $\Pi \cup \{\theta,\theta\} = \Pi \cup \{\theta\}$. For the third: in $\mathrm{LK}$ the rule $\forall\mathrm{s}$ consumes $\forall x\,\alpha$, so that instantiating it on distinct constants requires duplicating it beforehand, an operation made available by explicit contraction or by the set-theoretic reading of antecedents. In the block calculus the $\gamma$ rule retains $\forall x\,\varphi$ in the child, making available without duplication all the instantiations that fairness requires. The combinatorial role is the same.
\end{proof}

Proposition~\ref{prop:strutturali} states that the structural operations are not \emph{needed} as rules. It does not yet state that they are \emph{admissible} as transformations of closed tableaux, which is a different and stronger assertion; that is the business of the next section.

\section{Structural analysis}\label{sec:strutturale}

This section is proof-theoretic in the narrow sense: no semantic notion occurs in it. We show that the operations one would want as structural rules are admissible transformations on closed tableaux, and that all of them preserve the height; on that basis we prove that cut is admissible. The order of the results is the order of their dependence, and it follows the pattern familiar from the $\mathrm{G3}$ calculi \cite{NegriVonPlato2001}: generalized closure, substitution, weakening, invertibility, cut.

We record first a fact used repeatedly.

\begin{lem}[Finiteness]\label{lem:finitezza}
A closed tableau is a finite tree.
\end{lem}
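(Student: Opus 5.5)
The plan is to apply König's lemma to the tree, once two facts are in hand: the tableau is finitely branching, and it has no infinite branch.

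Finite branching is read off the rules. Every node has either no successor (when its block is closed and marked $\times$), one successor (rules $\alpha$, $\gamma$, $\delta$), or two successors (rules $\beta$). A tableau, as defined in the subsection on blocks, branches, closure, is built by applying one rule at a time to the block of the parent. So every node has at most two children.

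The absence of an infinite branch comes from the definition of a closed tableau. By Definition~\ref{def:chiusura}, a tableau is closed when every branch of it is closed, that is, every branch terminates in a closed block, and no rule is applied below that block. I will read ``branch'' as a maximal path from the root. Along an infinite path there is no terminal node, so the path is not closed in this sense. If some path were infinite, the tableau would therefore not be closed.

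König's lemma then finishes the argument: a finitely branching tree with no infinite path is finite. Since a finite tree has finite height, this also justifies the notation $\bd{h}\Pi$.

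The only delicate point is interpretive rather than combinatorial. I must make sure that ``every branch is closed'' is understood to cover infinite branches, and not only finite ones ending in $\times$. I would state this reading explicitly: an infinite branch has no last block, so it has no closed last block, and it is open. With that settled, the rest is routine. The argument could also be phrased without naming König's lemma, by contraposition: if the tree were infinite, then, since each node has at most two children, some child of the root would have infinitely many descendants, and iterating this choice would build an infinite branch.
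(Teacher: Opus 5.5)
Your proof is correct and is essentially the paper's argument: every node has at most two children, every branch of a closed tableau ends in a closed leaf, and K\"onig's lemma then excludes an infinite tree. Your explicit remark that an infinite branch has no last block, and so cannot count as closed, is the point the paper states as ``an infinite branch ends in no leaf.''
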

\begin{proof}
Every branch of a closed tableau ends, after finitely many steps, in a closed leaf. An infinite tree in which every node has at most two children would possess, by K\"onig's lemma \cite{Konig1927}, an infinite branch, and an infinite branch ends in no leaf.
\end{proof}

A second observation, equally elementary, will be used without further mention: \emph{no rule of the calculus has a literal as its principal formula}. Every principal formula is either a conjunction, a disjunction, an implication, a quantified formula, or the negation of one of these; a literal is an atom or the negation of an atom. This is what makes the base cases of the inductions below go through.

\subsection{Generalized closure}

The closure criterion of Definition~\ref{def:chiusura} speaks of literals only. Nothing is lost.

\begin{lem}[Generalized closure]\label{lem:chiusura-gen}
For every block $\Pi$ and every formula $\theta$, $\; \bd{2\dg{\theta}} \Pi \cup \{\theta, \lnot\theta\}$.
\end{lem}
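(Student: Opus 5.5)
The proof is by induction on $\dg{\theta}$, simultaneously for all blocks $\Pi$. The inductive step is to be applied to instances $\varphi[x/a]$ of quantified formulae, so the induction is on the degree and not on the structure of $\theta$; this is legitimate because $\dg{\varphi[x/a]} = \dg{\varphi}$. Throughout, the block $\Pi \cup \{\theta,\lnot\theta\}$ is a set. When a rule removes its principal formula, whatever remains of $\Pi$ is still an arbitrary block, so the induction hypothesis, stated for every $\Pi$, applies to the children as they stand. In the base case $\theta$ is atomic, the block is closed by Definition~\ref{def:chiusura}, and the height is $0 = 2\dg{\theta}$.

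For the propositional steps I apply at most two rules to bring the block down to complementary pairs of lower degree. If $\theta = \lnot\psi$, one application of $\lnot\lnot$ to $\lnot\lnot\psi$ gives $\Pi, \lnot\psi, \psi$. The induction hypothesis closes this in height $2\dg{\psi}$, for a total of $1 + 2\dg{\psi} \le 2\dg{\theta}$. If $\theta = \varphi \land \psi$, I apply $\land$ and then $\lnot\land$, reaching the two blocks $\Pi', \varphi, \psi, \lnot\varphi$ and $\Pi', \varphi, \psi, \lnot\psi$. The induction hypothesis closes them within $2\max(\dg{\varphi},\dg{\psi})$. Since $\dg{\theta} = \dg{\varphi}+\dg{\psi}+1$, the total height $2 + 2\max(\dg{\varphi},\dg{\psi})$ is at most $2\dg{\theta}$. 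The cases $\lor$ (the $\alpha$ rule is $\lnot\lor$, the $\beta$ rule is $\lor$) and $\rightarrow$ (the $\alpha$ rule is $\lnot\rightarrow$, the $\beta$ rule is $\rightarrow$) are handled identically, applying the $\alpha$ rule first so that both branches of the $\beta$ rule inherit its components.

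For the quantifier steps the order of the rules matters, as noted after the quantifier rules. If $\theta = \forall x\,\varphi$, I first apply $\lnot\forall$ with a fresh constant $a$, obtaining $\Pi, \forall x\,\varphi, \lnot\varphi[x/a]$. Now $a$ belongs to the parameters of the branch, so $\forall$ may be applied on $\adown = a$, giving $\Pi, \forall x\,\varphi, \varphi[x/a], \lnot\varphi[x/a]$. The induction hypothesis on $\varphi[x/a]$, of degree $\dg{\varphi}$, closes this in $2\dg{\varphi}$, for a total of $2 + 2\dg{\varphi} = 2\dg{\theta}$. The case $\theta = \exists x\,\varphi$ is symmetric: apply $\exists$ with a fresh $a$, then $\lnot\exists$ on $a$.

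I expect the main obstacle to be the freshness condition of Definition~\ref{def:nuova} when subtrees are grafted. The closed tableau supplied by the induction hypothesis for a child block was built for that block as a root. Once it is placed below the steps just performed, its $\delta$ applications must choose constants that are fresh with respect to the whole branch, including the ancestors. This includes constants that occurred above but have since disappeared, such as a constant used vacuously. I would handle this by strengthening the statement to say: for every finite set $K$ of constants there is a closed tableau for $\Pi \cup \{\theta,\lnot\theta\}$ of height at most $2\dg{\theta}$ whose fresh constants avoid $K$. The induction then carries $K$ enlarged by the constants of the steps already performed. Fresh choices always exist because $\Cost$ is infinite while branches are finite (Lemma~\ref{lem:finitezza}). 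Alternatively, I would use the fact that renaming a fresh constant throughout a subtree yields a tableau of the same shape.
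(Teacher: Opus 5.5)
Your route is the paper's: induction on $\dg{\theta}$ for all blocks at once, a single $(\lnot\lnot)$ step for a negation, and an $\alpha$/$\beta$ pair for each binary connective. The paper applies the $\beta$ rule first for $\lor$ and $\rightarrow$, and you apply the $\alpha$ rule first; either order costs two steps. For the quantifiers both proofs use a $\delta$ step followed by a $\gamma$ step on the same constant, with the same height count.

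One step, however, fails as you state it. After $(\lnot\forall)$ with fresh $a$, you assert that $a$ is a parameter of the branch, so that $(\forall)$ may be applied with $\adown = a$. That is true only when $x$ occurs free in $\varphi$. Otherwise $\lnot\varphi[x/a]$ is just $\lnot\varphi$, and $a$ occurs in no block. By Definition~\ref{def:parametri} it is then not in $\mathrm{Par}(\mathcal R)$: having been used as a witness does not make a constant a parameter. So the $\gamma$ application on $a$ is not licensed, and the $\exists$ case has the same defect. The repair is the paper's: in the vacuous case instantiate on $a_0$, which yields the same instance $\varphi$ and the same child block.

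Your concern about grafting is well founded, and the paper's proof is silent on it, but it arises only in that same vacuous case. In the propositional steps and the non-vacuous quantifier steps, every child block contains all the constants of the blocks above it, the witness included. So a $\delta$ application that is fresh for the subtableau supplied by the induction hypothesis is fresh for the whole branch. A vacuous witness, by contrast, occurs in no block below, and the subtableau may reuse it, against the last clause of Definition~\ref{def:nuova}. Your strengthening by a finite set $K$ of constants to be avoided handles this. A simpler fix is to choose the vacuous witness after the subtableau has been built, since in that case the child block does not depend on it.
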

\begin{proof}
By induction on $\dg{\theta}$. If $\theta$ is atomic the block is closed by Definition~\ref{def:chiusura}, and the height is $0$.

If $\theta = \lnot\chi$, apply $(\lnot\lnot)$ to $\lnot\lnot\chi$: the child is $\Pi \cup \{\lnot\chi, \chi\}$, which by the induction hypothesis admits a closed tableau of height at most $2\dg{\chi}$; the total height is at most $1 + 2\dg{\chi} \le 2\dg{\theta}$.

If $\theta = \chi \land \xi$, apply $(\land)$ and then $(\lnot\land)$: the two children are $\Pi \cup \{\chi,\xi,\lnot\chi\}$ and $\Pi \cup \{\chi,\xi,\lnot\xi\}$, closed by the induction hypothesis at heights at most $2\dg{\chi}$ and $2\dg{\xi}$; two steps have been used, and $2 + \max(2\dg{\chi},2\dg{\xi}) \le 2\dg{\theta}$. The cases $\theta = \chi\lor\xi$ and $\theta = \chi\rightarrow\xi$ are the same computation with $(\lor)$ then $(\lnot\lor)$, respectively $(\rightarrow)$ then $(\lnot\rightarrow)$, and with the roles of the two branches exchanged.

If $\theta = \forall x\,\chi$, apply $(\lnot\forall)$ to $\lnot\forall x\,\chi$ with a fresh constant $b$, obtaining $\Pi \cup \{\forall x\,\chi, \lnot\chi[x/b]\}$, and then $(\forall)$ with $\adown := b$ when $x$ occurs free in $\chi$, so that $b$ is a parameter of the branch, and with $\adown := a_0$ otherwise, the substitution being vacuous in that case. Either way the child is $\Pi' \cup \{\chi[x/c], \lnot\chi[x/c]\}$ for the constant $c$ used, and the induction hypothesis applies, $\dg{\chi[x/c]} = \dg{\chi} < \dg{\theta}$; the height is at most $2 + 2\dg{\chi} = 2\dg{\theta}$. The case $\theta = \exists x\,\chi$ is dual, with $(\exists)$ followed by $(\lnot\exists)$.
\end{proof}

\subsection{Substitution and weakening}

The transformations of this subsection all rest on the freedom to rename the constants that the $\delta$ rules introduce. We isolate that freedom first, in the form in which the freshness condition makes it available, and then derive from it the admissibility of substitution and of weakening.

\begin{lem}[Renaming of a witness]\label{lem:rinomina-tab}
Let $T$ be a closed tableau for $\Pi$ of height at most $h$, let $a$ be the constant introduced by some $\delta$ application of $T$, and let $b \neq a_0$ be a constant occurring nowhere in $T$. Replacing $a$ by $b$ in the instance introduced by that application and in every block of the subtree below it yields a closed tableau for $\Pi$ of height at most $h$.
\end{lem}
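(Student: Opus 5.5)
The plan is to define the transformed tableau $T'$ explicitly and check, node by node, that it is a legitimate closed tableau of the same shape. Let $\nu$ be the node of $T$ at which the given $\delta$ application is made; its principal formula is $\exists x\,\varphi$ or $\lnot\forall x\,\varphi$, and its child carries $\varphi[x/a]$ or $\lnot\varphi[x/a]$. We set $T'$ to agree with $T$ outside the subtree rooted at the child of $\nu$. Inside that subtree, every block $\Sigma$ is replaced by $\Sigma[a/b]$, the result of replacing every occurrence of the constant $a$ by $b$. Since $T'$ has the same underlying tree as $T$, the height bound $h$ is immediate, and the root is still $\Pi$, because the root lies above the child of $\nu$.

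The first step is a syntactic observation: renaming a constant commutes with the rules. A block $\Sigma$ is closed if and only if $\Sigma[a/b]$ is closed. One direction is clear; for the other we need injectivity of the renaming on the formulae involved, which holds because $b$ occurs nowhere in $T$, so the map $\theta\mapsto\theta[a/b]$ is injective on formulae of $T$. This also shows that no block collapses, and that an atom and the negation of an atom stay complementary exactly when they were. For each $\alpha$, $\beta$, $\gamma$ or $\delta$ step below $\nu$, with parent $\Sigma$ and children $\Sigma_i$, the renamed children $\Sigma_i[a/b]$ are the children of the same rule applied to $\Sigma[a/b]$ on the renamed principal formula. This uses $(\psi[y/c])[a/b] = (\psi[a/b])[y/c[a/b]]$, valid because all terms are closed constants.

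Next we check the side conditions, and this is the main obstacle. At $\nu$ itself, the new instance $\varphi[x/b]$ comes from the unchanged principal formula, since $a$ does not occur in $\exists x\,\varphi$ by the freshness of $a$. The constant $b$ must then be fresh for the branch in the sense of Definition~\ref{def:nuova}. It is, because $b\neq a_0$, $b$ occurs nowhere in $T$, and $b$ was never used as a fresh constant in $T$; a constant used that way with no occurrence would still have to be ruled out, so "occurring nowhere in $T$" should be read as including use as a witness. We will make that explicit.

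Below $\nu$, a $\gamma$ step on $\adown$ becomes a $\gamma$ step on $\adown[a/b]$. If $\adown=a$, then $b$ is a parameter of the renamed branch, since the branch contains the child of $\nu$, where $b$ now occurs; in the vacuous case we fall back on $a_0$, or note that $a$ was a parameter only because it occurred on the branch, and that occurrence is now an occurrence of $b$. For a $\delta$ step below $\nu$ with fresh constant $c$: $c\neq a$, since $a$ already occurred or was used on the branch, and $c\neq b$, since $b$ is absent from $T$. So $c$ is unchanged, and it remains fresh, because the renamed branch's parameters are the old ones with $a$ swapped for $b$ above and below. Above $\nu$ and on other branches nothing changes. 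Finally, every leaf of $T'$ is a renamed closed leaf, hence closed, so $T'$ is closed with height at most $h$.
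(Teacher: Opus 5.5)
Your proposal is correct and follows the paper's proof essentially step for step. The uniform renaming below the $\delta$ node carries rule applications to rule applications and closed leaves to closed leaves, and the side conditions are checked at that node, at later $\gamma$ steps and at later $\delta$ steps, exactly as the paper does. Your remarks on the injectivity of the renaming, and on reading ``occurring nowhere in $T$'' as also excluding use as a vacuous witness, are refinements the paper leaves implicit. The fallback on $a_0$ is unnecessary: in the vacuous case $a$ never enters a block below the application, so no $\gamma$ step there instantiates on it.
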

\begin{proof}
The root is untouched: $a$ was fresh when introduced, hence occurs in no block above the application, and in particular not in $\Pi$. Below the application the replacement is uniform, so every rule application remains an application of the same rule -- instances are carried to instances -- and every complementary pair of literals $\theta, \lnot\theta$ is carried to the complementary pair $\theta[a/b], \lnot\theta[a/b]$, so that all leaves remain closed. The side conditions survive: the $\delta$ application in question now introduces $b$, which occurs nowhere above it because it occurred nowhere in $T$, and $b \neq a_0$; a later $\delta$ application introducing a constant $c$ still introduces one foreign to its renamed branch, $c$ being distinct from $a$, which was already on the branch, and from $b$, which occurred nowhere; and a $\gamma$ application instantiates on a constant that is still a parameter of its renamed branch, occurrences having been renamed uniformly along it. No step is added or removed, so the height does not grow.
\end{proof}

\begin{lem}[Height-preserving admissibility of substitution]\label{lem:sost}
Let $a \neq a_0$ and let $b$ be any constant. If $\bd{h} \Pi$ then $\bd{h} \Pi[a/b]$.
\end{lem}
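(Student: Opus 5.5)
The plan is an induction on $h$, or equivalently a transformation of a given closed tableau $T$ for $\Pi$ of height at most $h$ into one for $\Pi[a/b]$ whose skeleton is identical. The obvious move is to replace $a$ by $b$ uniformly in every block of $T$. Closed leaves stay closed, since a complementary pair of literals $\theta,\lnot\theta$ becomes $\theta[a/b],\lnot\theta[a/b]$. The $\alpha$ and $\beta$ rules commute with the substitution: $(\varphi\land\psi)[a/b]=\varphi[a/b]\land\psi[a/b]$, and similarly for the other connectives. For a $\gamma$ application on $\adown=c$, the renamed instance is $\varphi[a/b][x/c[a/b]]$, which is the $\gamma$ instance of the renamed principal formula on $c[a/b]$. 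That constant is a parameter of the renamed branch: either it occurs in some renamed block, or it is $a_0$, and $a_0$ is left unchanged because $a\neq a_0$. Here the hypothesis $a\neq a_0$ is genuinely used. Distinct children may collapse after renaming, but a rule application whose children coincide still counts as an application, so this causes no harm, and the height is unchanged.

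The real obstacle is the $\delta$ applications. After the substitution their side condition may fail in two ways. First, the introduced constant $c$ may be $b$ itself, since $b$ could be introduced fresh somewhere in $T$. Second, $c$ may be $a$; this cannot happen on a branch where $a$ already occurs above, but it can happen when $a\notin\Pi$ and some $\delta$ rule introduces $a$. In either case the renamed constant is no longer fresh for the renamed branch. So I will first preprocess $T$ by Lemma~\ref{lem:rinomina-tab}. Because $T$ is finite by Lemma~\ref{lem:finitezza}, only finitely many $\delta$ applications introduce $a$ or $b$. I will take them one at a time, say top-down, and rename each witness to a constant $d\neq a_0$ that occurs nowhere in the current tableau and differs from $a$ and $b$. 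Such a $d$ exists because $\Cost$ is infinite and the tableau mentions only finitely many constants. Each renaming keeps the root $\Pi$, keeps closure, and does not increase the height. At the end, no $\delta$ application of $T$ introduces $a$ or $b$.

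With this preprocessing done, the uniform substitution preserves every $\delta$ side condition. Let a $\delta$ application introduce $c\notin\{a,b,a_0\}$. Then $c$ is unaffected by the substitution, and $c$ does not occur above the application in the renamed tableau: it did not occur there before, and the substitution only creates occurrences of $b\neq c$. It was also not used as an earlier fresh constant on that branch, since those are unchanged and were distinct from $c$. Finally, the principal formula $\exists x\,\varphi$ becomes $\exists x\,\varphi[a/b]$, and its instance is $\varphi[a/b][x/c]=\varphi[x/c][a/b]$, because $c\neq a$ and the constants are closed terms.

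The resulting tree is therefore a closed tableau for $\Pi[a/b]$ of height at most $h$, which gives $\bd{h}\Pi[a/b]$. I expect the only delicate verification to be the bookkeeping in the preprocessing step: each application of Lemma~\ref{lem:rinomina-tab} must be made with respect to the current tableau, and the fresh $d$ must avoid $a$ and $b$ as well as everything occurring in that tableau.
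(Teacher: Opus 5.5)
Your proof is correct and follows the paper's argument. You use Lemma~\ref{lem:rinomina-tab} on the finite closed tableau so that no $\delta$ witness is $a$ or $b$, then replace $a$ by $b$ uniformly and check the leaves, the $\gamma$ parameters (where $a \neq a_0$ is used) and $\delta$ freshness; the only difference is that the paper renames every $\delta$ witness to pairwise distinct new constants, while you rename just those equal to $a$ or $b$, which suffices.
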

\begin{proof}
Let $T$ be a closed tableau for $\Pi$ of height at most $h$; by Lemma~\ref{lem:finitezza} it is finite, and performs finitely many $\delta$ applications. Applying Lemma~\ref{lem:rinomina-tab} to each of them in turn, with replacement constants chosen pairwise distinct, distinct from $a_0$, from $a$ and from $b$, and occurring nowhere in $T$ -- such constants exist, $\Cost$ being countable and $T$ finite -- we may assume that no constant introduced as fresh in $T$ is $a$ or $b$. Now replace $a$ by $b$ uniformly throughout $T$.

Complementary pairs of literals go to complementary pairs of literals, so every leaf remains closed. The rules $\alpha$ and $\beta$ commute with the replacement, substitution of a constant for a constant distributing over the formation of components, and they carry no side condition. Consider a $\gamma$ application on the constant $c$: after the replacement it is an application on $c[a/b]$, and this is a parameter of the renamed branch. Indeed, if $c \neq a$ then $c$ is unaffected, and it either occurred in a block of the branch -- an occurrence that the replacement does not destroy -- or it is $a_0$, which belongs to every set of parameters; while if $c = a$, then $a$ occurred in some block of the branch, since $a \neq a_0$, and $b$ occurs in the corresponding renamed block. Consider finally a $\delta$ application with witness $w$: by the preliminary renaming $w \notin \{a,b\}$, so the occurrences of $w$ are exactly those it had before, and it remains fresh for its branch, distinct from $a_0$ and from every earlier witness. The transformation adds no step.
\end{proof}

\begin{lem}[Height-preserving admissibility of weakening]\label{lem:wk}
If $\bd{h}\Pi$ then $\bd{h}\Pi \cup \Sigma$ for every finite set $\Sigma$ of sentences.
\end{lem}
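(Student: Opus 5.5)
The plan is to transform a closed tableau $T$ for $\Pi$ of height at most $h$ into one for $\Pi\cup\Sigma$ by adding $\Sigma$ to every block of $T$. Formally this is an induction on $h$, but the content is the same as a direct transformation.

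The first step clears the witnesses out of the way of $\Sigma$. By Lemma~\ref{lem:finitezza}, $T$ is finite and so performs finitely many $\delta$ applications. Since $\Sigma$ is finite, only finitely many constants occur in it. Apply Lemma~\ref{lem:rinomina-tab} to each $\delta$ application in turn, choosing replacement constants that are pairwise distinct, distinct from $a_0$, and foreign both to $T$ and to $\Sigma$. This is possible because $\Cost$ is countable. The result is a closed tableau $T'$ for $\Pi$ of height at most $h$ in which no witness occurs in $\Sigma$.

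The second step adds $\Sigma$ to every block of $T'$, leaving every rule application in place. The checks are as follows.
\begin{itemize}
\item Leaves: a closed leaf still contains its complementary pair of literals, so it stays closed.
\item Rules $\alpha$ and $\beta$: they act on the principal formula and carry the context unchanged, so they remain correct applications.
\item Absorption: if the principal formula, or a component it produces, already lies in $\Sigma$, the set-theoretic reading of blocks makes the step still a legal application with the same shape. The principal formula is removed from the parent, but $\Sigma$ is re-added in the child, and that is exactly the child block we want.
\item Rules $\gamma$: the instantiating constant was a parameter of the original branch. Adding formulae only enlarges the set of parameters, so it remains one.
\item Rules $\delta$: the witness was fresh for its original branch and, by the first step, occurs nowhere in $\Sigma$. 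It is therefore still fresh for the enlarged branch, and it is still distinct from $a_0$ and from earlier witnesses.
\end{itemize}
No step is added, so the height stays at most $h$.

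The main obstacle I expect is the absorption check for a $\delta$ rule whose principal formula $\exists x\,\varphi$ also belongs to $\Sigma$. After weakening, the child block still contains $\exists x\,\varphi$ alongside $\varphi[x/\aup]$. This has to be read as a legitimate application whose output block is $(\Pi_{\text{child}})\cup\Sigma$, which holds because the rule is stated on sets: parent $\Pi',\exists x\,\varphi$ with $\Pi'\ni \exists x\,\varphi$ is allowed. If one worries that this reading is not allowed, the fallback is to argue by induction on $h$ with a case split on whether the principal formula lies in $\Sigma$. The uniform argument should nevertheless go through, since blocks are sets and each rule only prescribes that the principal formula is replaced in the displayed position.
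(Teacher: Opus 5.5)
Your proposal is correct and is essentially the paper's own proof: use Lemma~\ref{lem:rinomina-tab} to rename the finitely many $\delta$ witnesses of the finite closed tableau away from the constants of $\Sigma$, then adjoin $\Sigma$ to every block and check that leaves stay closed, that $\gamma$ instantiations stay on parameters, that $\delta$ witnesses stay fresh, and that no step is added. Your extra ``absorption'' check, for a principal formula that already lies in $\Sigma$, covers a point the paper passes over silently by saying that the rules ``transmit an arbitrary context''; that phrase relies on exactly the set-based reading of the rule schemata that you state explicitly.
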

\begin{proof}
Let $T$ be a closed tableau for $\Pi$ of height at most $h$, finite by Lemma~\ref{lem:finitezza}. By Lemma~\ref{lem:rinomina-tab}, applied to each of its finitely many $\delta$ applications, we may assume that no constant introduced as fresh in $T$ occurs in $\Sigma$. Let $T'$ be obtained from $T$ by adjoining $\Sigma$ to every block. Every rule application of $T$ remains one in $T'$: the rules act on a principal formula and transmit an arbitrary context, which has merely been enlarged; every $\gamma$ instantiation is still on a parameter of the enlarged branch; and every $\delta$ application still introduces a constant foreign to its branch, the constants of $\Sigma$ having been avoided. Every leaf remains closed, its complementary pair being undisturbed, and no step has been added.
\end{proof}

\subsection{Invertibility}

Invertibility is here the assertion that what a rule licenses downwards it also licenses upwards: if the block to which a rule would apply is refutable, then so is each block the rule would produce. In the presence of closure on literals it holds with preservation of the height, and this is what makes it usable in the proof of cut admissibility.

\begin{lem}[Height-preserving invertibility]\label{lem:inv}
Let $\Pi$ be a block, $h \ge 0$.
\begin{enumerate}
\item[\textup{(i)}] If $\bd{h}\Pi, \lnot\lnot\varphi$ then $\bd{h}\Pi, \varphi$. If $\bd{h}\Pi, \varphi\land\psi$ then $\bd{h}\Pi, \varphi, \psi$. If $\bd{h}\Pi, \lnot(\varphi\lor\psi)$ then $\bd{h}\Pi, \lnot\varphi, \lnot\psi$. If $\bd{h}\Pi, \lnot(\varphi\rightarrow\psi)$ then $\bd{h}\Pi, \varphi, \lnot\psi$.
\item[\textup{(ii)}] If $\bd{h}\Pi, \varphi\lor\psi$ then $\bd{h}\Pi, \varphi$ and $\bd{h}\Pi, \psi$; and correspondingly for $\lnot(\varphi\land\psi)$ with components $\lnot\varphi, \lnot\psi$ and for $\varphi\rightarrow\psi$ with components $\lnot\varphi, \psi$.
\item[\textup{(iii)}] If $\bd{h}\Pi, \forall x\,\varphi$ then $\bd{h}\Pi, \forall x\,\varphi, \varphi[x/a]$ for every constant $a$; and correspondingly for $\lnot\exists x\,\varphi$ with $\lnot\varphi[x/a]$.
\item[\textup{(iv)}] If $\bd{h}\Pi, \exists x\,\varphi$ then $\bd{h}\Pi, \varphi[x/a]$ for every constant $a$; and correspondingly for $\lnot\forall x\,\varphi$ with $\lnot\varphi[x/a]$.
\end{enumerate}
\end{lem}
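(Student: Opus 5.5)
We argue by induction on $h$, uniformly for all four clauses. Fix one clause: write $\Pi, \pi$ for the premiss block, with $\pi$ the displayed compound formula, and $\Pi, \Theta$ (resp.\ $\Pi,\Theta_1$ and $\Pi,\Theta_2$) for the block(s) to be shown refutable. Let $T$ be a closed tableau for $\Pi,\pi$ of height at most $h$.

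\textbf{Base case $h=0$.} The root $\Pi,\pi$ is closed. Since $\pi$ is not a literal, the complementary pair of literals lies in $\Pi$. Hence $\Pi,\Theta$ is closed too, at height $0$.

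\textbf{Inductive step: the first rule of $T$ acts on $\pi$ itself.}
\begin{itemize}
\item \emph{Rules $\alpha$, $\beta$.} The child blocks are exactly the required ones, with $\pi$ removed. These children carry closed subtrees of height at most $h-1\le h$.
\item \emph{Rule $\gamma$ on $\adown$, clause (iii).} The child is $\Pi,\forall x\,\varphi,\varphi[x/\adown]$, closed at height $h-1$. For $a=\adown$ we are done. For another constant $a$, the claim follows from the induction hypothesis; this is the case of a $\gamma$ application on a different constant, treated in the next paragraph.
\item \emph{Rule $\delta$, clause (iv).} The child is $\Pi,\varphi[x/b]$ with $b$ fresh, so $b\neq a_0$ and $b$ does not occur in $\Pi,\exists x\,\varphi$. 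By Lemma~\ref{lem:sost} with $b\mapsto a$ we get $\bd{h-1}(\Pi,\varphi[x/b])[b/a]=\Pi,\varphi[x/a]$.
\end{itemize}

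\textbf{Inductive step: the first rule acts on some $\rho\in\Pi$ with $\rho\neq\pi$, or is a $\gamma$ application on $\pi$ with a different constant.} Then $\pi$ persists in every child. Each child has the form $\Pi_i,\pi$ with $\bd{h-1}\Pi_i,\pi$. By the induction hypothesis, $\bd{h-1}\Pi_i,\Theta$. We then reapply the same rule to $\Pi,\Theta$, obtaining children $\Pi_i,\Theta$ and height $\le h$. Two side conditions must be secured.
\begin{itemize}
\item \emph{$\delta$ rule with witness $c$.} We need $c$ fresh for the new branch, which now contains $\Theta$. The constant $a$ in clauses (iii)/(iv) might coincide with $c$. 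Before anything else we use Lemma~\ref{lem:rinomina-tab} to rename every $\delta$ witness of $T$ to a constant different from $a$ and from all constants of $\Pi,\pi,\Theta$. This costs no height.
\item \emph{$\gamma$ rule on $\adown$.} In the new branch $\adown$ must still be a parameter. Removing $\pi$ could delete an occurrence of $\adown$, but $\Theta$ contains every constant of $\pi$ except in the vacuous $\delta$ case. So we must handle a $\gamma$ instantiation on a constant occurring only in $\pi$, e.g.\ when $\varphi[x/a]$ loses it. In that case we first apply Lemma~\ref{lem:sost}, replacing that constant by $a_0$ in the subtree (if it is not $a_0$ and was never a witness). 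Alternatively, we observe that such constants still occur elsewhere on the branch.
\end{itemize}
In the $\gamma$-on-$\pi$ subcase the child is $\Pi,\pi,\varphi[x/c]$, and the induction hypothesis adds $\varphi[x/a]$. We then reapply $\gamma$ on $c$.

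\textbf{Main obstacle.} The delicate point is this bookkeeping of parameters and freshness when $\pi$ is deleted or $\Theta$ is added. Our approach is to normalize witnesses with Lemma~\ref{lem:rinomina-tab}, and to redirect any $\gamma$ instantiation on a constant occurring only in the deleted $\pi$ via Lemma~\ref{lem:sost}. Both operations preserve height.
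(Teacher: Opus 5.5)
Your proof is correct in substance and has the same skeleton as the paper's. Both argue by induction on $h$, with a non-literal $\pi$ in the base case and a split according to whether the root rule acts on $\pi$. Both make a preliminary appeal to Lemma~\ref{lem:rinomina-tab} to keep $\delta$ witnesses away from $a$, and use Lemma~\ref{lem:sost} when $\exists x\,\varphi$ is principal.

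You depart from the paper in two places. For (iii) the paper simply cites Lemma~\ref{lem:wk}, since the target block contains $\Pi\cup\{\forall x\,\varphi\}$. Your induction, with its subcase of a $\gamma$ step on $\pi$ at another constant, is sound but re-proves a special case of weakening. The more interesting departure is your redirection of a $\gamma$ instantiation to $a_0$ through Lemma~\ref{lem:sost}. The paper's proof of this lemma does not contain it; the paper uses that device only in Case~3a of Theorem~\ref{thm:taglio}. It is genuinely needed here, though not where you locate it.

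\emph{Where the parameter loss actually occurs.} A $\delta$ instance $\varphi[x/a]$ contains every constant of $\exists x\,\varphi$, vacuous case included. The $\alpha$ components jointly contain the constants of their principal formula, and in (iii) $\Theta$ contains $\pi$ itself. So (i), (iii) and (iv) lose no parameter. The loss happens in (ii), because a single $\beta$ component can drop constants.
\begin{itemize}
\item Take $\Pi=\{\forall y\,\lnot P(y)\}$ and $\pi=P(b)\lor P(c)$, with $a_0,b,c$ distinct.
\item A closed tableau may start with $(\forall)$ on $c$, which is a parameter of the old root but not of $\Pi\cup\{P(b)\}$.
\item So the rule cannot simply be re-applied. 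The paper's ``as before'' in case (ii) overlooks this: its check that no parameter is lost was made for $\alpha$ components.
\item Your remedy repairs it. From $\bd{h-1}\Pi\cup\{P(b),\lnot P(c)\}$, Lemma~\ref{lem:sost} gives $\bd{h-1}\Pi\cup\{P(b),\lnot P(a_0)\}$, and $(\forall)$ on $a_0$ is licit.
\end{itemize}
So state the fallback generally: it applies to every $\gamma$ instantiation on some $\adown\neq a_0$ that does not occur in $\Pi\cup\Theta$. Drop the ``alternatively'' clause. It is unavailable because at the root of the new tableau the branch is the root block alone.

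\emph{A point neither you nor the paper spells out.} Suppose the root principal formula $\rho\in\Pi$ also lies in $\Theta$ (say $\rho=\varphi$ in clause (i)) and is of type $\alpha$, $\beta$ or $\delta$. Then re-applying the rule consumes it, so the children are $\Pi_i\cup(\Theta\setminus\{\rho\})$ rather than $\Pi_i\cup\Theta$. The induction hypothesis for $\rho$'s own rule at height $h-1$ closes the gap. For that, run the induction on $h$ for the whole lemma at once, not one fixed clause at a time.
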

\begin{proof}
Statement (iii) is Lemma~\ref{lem:wk}, the block on the right containing the block on the left.

For (i), (ii) and (iv) the argument is one induction on $h$, and we display it for the representative cases; the remaining ones differ only in the name of the rule.

\emph{Case} (i), $\varphi\land\psi$. If $h = 0$ the block $\Pi \cup \{\varphi\land\psi\}$ is closed on a complementary pair of literals; a conjunction is not a literal, so the pair lies in $\Pi$, and $\Pi \cup \{\varphi,\psi\}$ is closed as well. Let $h > 0$ and let $r$ be the rule applied at the root, with principal formula $\chi$. If $\chi = \varphi\land\psi$, the child is $\Pi \cup \{\varphi,\psi\}$ and its subtableau has height at most $h-1$. If $\chi \neq \varphi\land\psi$, then $\chi \in \Pi$; the children of the root are $\Pi_j \cup \{\varphi\land\psi\}$ with $\bd{h-1}\Pi_j \cup \{\varphi\land\psi\}$, the induction hypothesis gives $\bd{h-1}\Pi_j \cup \{\varphi,\psi\}$, and $r$ may be re-applied. Its side conditions are met: a $\gamma$ instantiation was on a parameter of the old branch, and the constants of $\varphi\land\psi$ are those of $\varphi$ and $\psi$ together, so no parameter is lost; a $\delta$ application introduced a constant foreign to the old branch, and the new branch contains no constant that the old one did not contain.

\emph{Case} (ii), $\varphi\lor\psi$. For $h = 0$, a disjunction is not a literal, so the complementary pair lies in $\Pi$ and both $\Pi \cup \{\varphi\}$ and $\Pi\cup\{\psi\}$ are closed. For $h>0$: if the principal formula of the root rule is $\varphi\lor\psi$, the two children are exactly $\Pi\cup\{\varphi\}$ and $\Pi\cup\{\psi\}$, of height at most $h-1$; otherwise one applies the induction hypothesis to the children and re-applies the rule, as before.

\emph{Case} (iv), $\exists x\,\varphi$. Fix the constant $a$. Let $T$ be a closed tableau for $\Pi \cup \{\exists x\,\varphi\}$ of height at most $h$; by Lemma~\ref{lem:rinomina-tab} we may assume that no constant introduced as fresh in $T$ is $a$. We argue by induction on $h$. If $h = 0$, an existential formula is not a literal, so the complementary pair lies in $\Pi$ and $\Pi \cup \{\varphi[x/a]\}$ is closed. Let $h>0$, with root rule $r$ and principal formula $\chi$. If $\chi = \exists x\,\varphi$, the rule is $(\exists)$ with some fresh witness $b$, and $\bd{h-1}\Pi \cup \{\varphi[x/b]\}$; since $b \neq a_0$, Lemma~\ref{lem:sost} yields $\bd{h-1} (\Pi \cup \{\varphi[x/b]\})[b/a]$, and $b$ does not occur in $\Pi$, so this block is $\Pi \cup \{\varphi[x/a]\}$. If $\chi \neq \exists x\,\varphi$, then $\chi \in \Pi$ and the children of the root are $\Pi_j \cup \{\exists x\,\varphi\}$; the induction hypothesis gives $\bd{h-1}\Pi_j \cup \{\varphi[x/a]\}$, and $r$ is re-applied. Its side conditions are met: the constants of $\exists x\,\varphi$ occur in $\varphi[x/a]$ as well, so a $\gamma$ instantiation retains its parameter; and a $\delta$ witness of $T$ is distinct from $a$ by the preliminary renaming, hence still foreign to the new branch.
\end{proof}

\subsection{Admissibility of cut}

We come to the result that gives the previous lemmas their point. The rule of cut is not among the rules of the calculus; the question is whether adding it would let one refute blocks that are otherwise irrefutable, and the answer is that it would not.

\begin{thm}[Cut; Smullyan]\label{thm:taglio}
Let $\theta$ be a sentence. If $\vdash \Pi \cup \{\theta\}$ and $\vdash \Pi \cup \{\lnot\theta\}$, then $\vdash \Pi$.
\end{thm}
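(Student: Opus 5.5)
I will prove the statement by a double induction on the degree $\dg{\theta}$ of the cut formula (main induction) and on the sum $h_1+h_2$ of the heights of closed tableaux for $\Pi\cup\{\theta\}$ and $\Pi\cup\{\lnot\theta\}$ (subsidiary induction). The aim is the statement in the sharper form $\bd{h_1}\Pi,\theta$ and $\bd{h_2}\Pi,\lnot\theta$ imply $\vdash\Pi$. Throughout I use Lemmas~\ref{lem:chiusura-gen}, \ref{lem:sost}, \ref{lem:wk} and~\ref{lem:inv} freely. Before cutting I rename, by Lemma~\ref{lem:rinomina-tab}, the $\delta$ witnesses of both tableaux so that they avoid every constant I will need.

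\emph{Cut formula not principal.} If either tableau has height $0$, I look at the closing pair. If that pair lies in $\Pi$, then $\Pi$ is closed. Otherwise $\theta$ is a literal and the pair involves $\theta$ or $\lnot\theta$. If $\theta$ is an atom and the left tableau closes on $\theta,\lnot\theta$, then $\lnot\theta\in\Pi$, so $\Pi\cup\{\lnot\theta\}=\Pi$ and we are done. The symmetric cases are handled the same way, including $\theta=\lnot p$ with $\lnot\theta=\lnot\lnot p$, where the right side is non-literal but $p\in\Pi$. If $\theta$ is not a literal and neither tableau has height $0$, I inspect the root rules. If on one side, say the left, the principal formula $\chi$ lies in $\Pi$ (and is not $\theta$), the children are $\Pi_j\cup\{\theta\}$ of height $\le h_1-1$. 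I then apply Lemma~\ref{lem:inv} to the other premiss $\Pi\cup\{\lnot\theta\}$, inverting on $\chi$, to get $\bd{h_2}\Pi_j\cup\{\lnot\theta\}$; for a $\delta$ rule I use the same witness, which is admissible by (iv), and for a $\gamma$ rule I use weakening via (iii). Cutting by the subsidiary induction gives $\vdash\Pi_j$, and re-applying the root rule gives $\vdash\Pi$. The side conditions of that rule need checking. A $\gamma$ parameter could have occurred only in $\theta$; in that case I apply Lemma~\ref{lem:sost} to replace it by $a_0$, or note that the parameter is still present. A $\delta$ witness remains fresh because $\Pi_j\subseteq$ the old branch.

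\emph{Cut formula principal on both sides.} Here I reduce the degree, using invertibility rather than permutation. For $\theta=\varphi\land\psi$, inversion gives $\vdash\Pi,\varphi,\psi$, and from $\lnot\theta$ it gives $\vdash\Pi,\lnot\varphi$ and $\vdash\Pi,\lnot\psi$. Weakening the latter to $\Pi,\varphi,\lnot\psi$ and cutting on $\psi$ yields $\vdash\Pi,\varphi$; cutting on $\varphi$ then gives $\vdash\Pi$, by the main induction. The cases $\lor$ and $\rightarrow$ go the same way. For $\theta=\lnot\chi$, inversion of $\lnot\lnot\chi$ gives $\vdash\Pi,\chi$, and cutting this against $\vdash\Pi,\lnot\chi$ on $\chi$, of lower degree, yields $\vdash\Pi$. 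In fact these cases do not even need $\theta$ to be principal, since Lemma~\ref{lem:inv} applies whenever $\theta$ is compound. So the subsidiary induction is only needed when $\theta$ is atomic, and there only the non-principal permutation above is used. For $\theta=\forall x\,\varphi$, I choose $b$ not occurring in $\Pi\cup\{\theta\}$ and different from $a_0$. Then (iv) gives $\vdash\Pi,\lnot\varphi[x/b]$ and (iii) gives $\vdash\Pi,\forall x\,\varphi,\varphi[x/b]$. Cutting the latter against $\vdash\Pi,\lnot\forall x\,\varphi$, weakened with $\varphi[x/b]$, is a cut of the same degree. To handle it I intend to nest the inductions on degree and then on height, applied to a block containing the extra formula. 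Alternatively, I run the permutation argument on the left tableau, descending in height until $\forall x\,\varphi$ becomes principal. Its $\gamma$ step yields $\Pi',\forall x\,\varphi,\varphi[x/c]$ at lower height; the subsidiary induction removes $\forall x\,\varphi$, and a final cut on $\varphi[x/c]$ against $\Pi',\lnot\varphi[x/c]$ (obtained via (iv) and Lemma~\ref{lem:sost}) lowers the degree.

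The main obstacle I expect is exactly this $\gamma$/contraction case. The persistence of the principal formula blocks a pure degree reduction, so the height induction must be set up so that the premiss containing $\forall x\,\varphi$ strictly decreases while the $\lnot\forall$ side is kept at its original height, which inversion preserves. A secondary difficulty is bookkeeping of parameters. When a $\gamma$ instance used a constant occurring only in the cut formula, I will substitute $a_0$ by Lemma~\ref{lem:sost} to keep the rule applications legal.
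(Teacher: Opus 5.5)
Your argument is correct, but only with the second of your two treatments of $\theta=\forall x\,\varphi$; discard the first. Lemma~\ref{lem:inv}(iii) is mere weakening. The cut on $\forall x\,\varphi$ in the context $\Pi\cup\{\varphi[x/b]\}$ therefore has the same degree as the original cut, and premisses of the same heights $h_1$ and $h_2$. No measure has decreased, neither induction hypothesis applies, and ``nesting the inductions'' on the enlarged block does not change this.

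The alternative is sound:
\begin{enumerate}
\item By the secondary induction you permute on the tableau for $\Pi\cup\{\forall x\,\varphi\}$ until its root rule is the $\gamma$ step on $\forall x\,\varphi$ with some constant $c$.
\item The secondary hypothesis then cuts $\forall x\,\varphi$ out of $\bd{h_1-1}\Pi,\forall x\,\varphi,\varphi[x/c]$ against $\bd{h_2}\Pi,\varphi[x/c],\lnot\forall x\,\varphi$.
\item A cut on $\varphi[x/c]$ against $\bd{h_2}\Pi,\lnot\varphi[x/c]$, which Lemma~\ref{lem:inv}(iv) gives directly, finishes by the principal hypothesis.
\end{enumerate}
This is the paper's treatment of $\forall$ in Case~3c, with its Case~3a supplying the descent. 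The one difference is that you obtain $\Pi,\lnot\varphi[x/c]$ by inversion rather than from a root $(\lnot\forall)$ step followed by Lemma~\ref{lem:sost}, so you never have to permute on the $\lnot\forall$ side. For $\exists$ the permutation runs on the $\lnot\exists$ side. Accordingly, your remark that the subsidiary induction is needed only for atomic $\theta$ holds for the propositional cases alone.

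Where you genuinely depart from the paper is in those propositional cases. The paper runs one uniform analysis: a root closed on one side, the cut formula non-principal on the left (3a) or on the right (3b), or principal on both (3c). In 3c it reads the premisses of the reduced cuts off the two root rules, using Lemma~\ref{lem:inv} only to permute a non-principal cut. You obtain those premisses by inversion from the given tableaux, whatever their root rules. So every propositionally compound $\theta$, $\lnot p$ included, is dispatched by the degree induction alone.

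The height induction is then confined to the places where inversion makes no progress:
\begin{itemize}
\item atomic $\theta$, where both root rules necessarily act on $\Pi$;
\item the $\gamma$ side of a quantified $\theta$, where the retained principal formula turns inversion into weakening.
\end{itemize}
Your arrangement thus locates exactly where the persistence of $\gamma$ formulae, the counterpart of left contraction, forces the secondary induction. There the induction may run on the height of one tableau only, the other being kept at its height by inversion and weakening. The paper's arrangement is instead the symmetric one of Smullyan's argument and of the standard $\mathrm{G3}$ proofs.

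Finally, your preliminary renaming of witnesses and your appeal to Lemma~\ref{lem:chiusura-gen} are unnecessary. Lemmas~\ref{lem:inv}(iv) and \ref{lem:sost} carry out the renamings they need.
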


The statement is the eliminability of $\theta$ in the sense of \cite[Ch.~XII]{Smullyan1968}, and it is proved there in an abstract form covering block tableaux among other systems. The proof below is the specialization of that argument to the present notation, with one difference of arrangement: Smullyan's hypothesis $P_2$ is a property of a \emph{given} closed tableau, from which the invertibility of the rules is recovered afterwards as a corollary of the theorem, whereas here invertibility is Lemma~\ref{lem:inv}, established beforehand and used as an ingredient.
\begin{proof}
Let $T_1$ be a closed tableau for $\Pi \cup \{\theta\}$ of height $h_1$ and $T_2$ a closed tableau for $\Pi \cup \{\lnot\theta\}$ of height $h_2$. The argument is a double induction: the principal induction is on the degree $\dg{\theta}$ of the cut formula, the secondary one on the sum $h_1 + h_2$. Three configurations arise, according as one of the two roots is closed or neither is.

\smallskip
\emph{Configuration 1: $\Pi \cup \{\theta\}$ is closed}, on the complementary pair $p, \lnot p$ with $p$ atomic. If both members lie in $\Pi$, then $\Pi$ is closed. If $\theta = p$, then $\lnot p \in \Pi$, so $\Pi \cup \{\lnot\theta\} = \Pi$ and $T_2$ is already a closed tableau for $\Pi$. If $\theta = \lnot p$, then $p \in \Pi$ and $\lnot\theta = \lnot\lnot p$; Lemma~\ref{lem:inv}(i) applied to $\bd{h_2}\Pi \cup \{\lnot\lnot p\}$ gives $\bd{h_2}\Pi \cup \{p\}$, which is $\Pi$. These three possibilities are exhaustive, since $\theta$ cannot be both $p$ and $\lnot p$.

\smallskip
\emph{Configuration 2: $\Pi \cup \{\lnot\theta\}$ is closed}, on $p, \lnot p$. Either both members lie in $\Pi$, and $\Pi$ is closed; or $\lnot\theta = \lnot p$, whence $\theta = p \in \Pi$ and $\Pi \cup \{\theta\} = \Pi$, so that $T_1$ serves. The remaining possibility, $\lnot\theta = p$, is excluded, a negation not being atomic.

\smallskip
\emph{Configuration 3: neither root is closed.} Let $r_1$ be the rule applied at the root of $T_1$, with principal formula $\varphi_1$, and $r_2$ the rule applied at the root of $T_2$, with principal formula $\varphi_2$.

\emph{Case 3a: $\varphi_1 \neq \theta$}, hence $\varphi_1 \in \Pi$. Let $\Pi_1, \ldots, \Pi_k$ ($k \le 2$) be the blocks that $r_1$ produces from $\Pi$ with principal formula $\varphi_1$; then the children of the root of $T_1$ are the blocks $\Pi_j \cup \{\theta\}$, and $\bd{h_1 - 1}\Pi_j \cup \{\theta\}$ for each $j$. Lemma~\ref{lem:inv}, applied to $\bd{h_2}\Pi \cup \{\lnot\theta\}$ with the rule $r_1$ and the principal formula $\varphi_1 \in \Pi$, gives $\bd{h_2}\Pi_j \cup \{\lnot\theta\}$; here it matters that clauses (iii) and (iv) hold for \emph{every} constant, so that the same instantiating constant or the same witness may be used on the two sides. The secondary induction hypothesis, the degree of $\theta$ being unchanged and the sum of the heights having decreased, yields $\vdash \Pi_j$ for each $j$.

It remains to recompose. If $r_1$ is of type $\alpha$, $\beta$ or $\delta$, apply it to $\Pi$ with principal formula $\varphi_1$: the side condition of a $\delta$ rule is met, since a constant fresh for $\Pi \cup \{\theta\}$ is a fortiori fresh for $\Pi$. If $r_1$ is of type $\gamma$, with instantiating constant $\adown$, the single block $\Pi_1$ is $\Pi \cup \{\varphi[x/\adown]\}$, the principal formula being retained and already present in $\Pi$; should $\adown$ fail to be a parameter of $\Pi$ -- which can happen only if $\adown$ occurred in $\theta$ alone, and then $\adown \neq a_0$ -- Lemma~\ref{lem:sost} transforms $\vdash \Pi_1$ into $\vdash \Pi_1[\adown/a_0] = \Pi \cup \{\varphi[x/a_0]\}$, and $a_0$ is a parameter of every branch. In either case a single application of $r_1$ to $\Pi$ has the available blocks as its children, and $\vdash \Pi$.

\emph{Case 3b: $\varphi_2 \neq \lnot\theta$.} Symmetric to 3a, with the roles of the two tableaux exchanged.

\emph{Case 3c: $\varphi_1 = \theta$ and $\varphi_2 = \lnot\theta$.} The cut formula is principal on both sides; in particular $\theta$ is not a literal. We distinguish according to its principal sign.

If $\theta = \lnot\chi$, then $\lnot\theta = \lnot\lnot\chi$ and $r_2$ can only be $(\lnot\lnot)$, so that $\vdash \Pi \cup \{\chi\}$. Together with $\vdash \Pi \cup \{\lnot\chi\}$, which is $T_1$, a cut on $\chi$ -- of degree $\dg{\chi} < \dg{\theta}$ -- gives $\vdash \Pi$ by the principal induction hypothesis. This single argument disposes of every $\theta$ whose principal sign is a negation.

If $\theta = \varphi \land \psi$, then $r_1$ is $(\land)$ and $r_2$ is $(\lnot\land)$, so that $\vdash \Pi \cup \{\varphi,\psi\}$, $\vdash \Pi \cup \{\lnot\varphi\}$ and $\vdash \Pi \cup \{\lnot\psi\}$. By Lemma~\ref{lem:wk}, $\vdash \Pi \cup \{\varphi, \lnot\psi\}$; a cut on $\psi$, of smaller degree, gives $\vdash \Pi \cup \{\varphi\}$, and a cut on $\varphi$, again of smaller degree, gives $\vdash \Pi$.

If $\theta = \varphi \lor \psi$, then $\vdash \Pi\cup\{\varphi\}$, $\vdash \Pi\cup\{\psi\}$ and $\vdash \Pi \cup \{\lnot\varphi,\lnot\psi\}$. Weakening the second to $\vdash \Pi \cup \{\lnot\varphi, \psi\}$ and cutting on $\psi$ gives $\vdash \Pi \cup \{\lnot\varphi\}$; a cut on $\varphi$ with the first gives $\vdash \Pi$. The case $\theta = \varphi\rightarrow\psi$ runs likewise: from $\vdash \Pi\cup\{\lnot\varphi\}$, $\vdash \Pi\cup\{\psi\}$ and $\vdash\Pi\cup\{\varphi,\lnot\psi\}$, weaken the second to $\vdash \Pi\cup\{\varphi,\psi\}$, cut on $\psi$ to obtain $\vdash\Pi\cup\{\varphi\}$, and cut on $\varphi$.

If $\theta = \forall x\,\varphi$, then $r_1$ is the $\gamma$ rule $(\forall)$ with some instantiating constant $a$, so that $\bd{h_1-1} \Pi \cup \{\forall x\,\varphi, \varphi[x/a]\}$, and $r_2$ is the $\delta$ rule $(\lnot\forall)$ with a fresh witness $b$, so that $\bd{h_2-1}\Pi \cup \{\lnot\varphi[x/b]\}$. Since $b \neq a_0$ and $b$ does not occur in $\Pi$, Lemma~\ref{lem:sost} turns the latter into
\[
\bd{h_2-1} \Pi \cup \{\lnot\varphi[x/a]\}.
\]
Weakening $T_2$ gives $\bd{h_2}\Pi \cup \{\varphi[x/a], \lnot\forall x\,\varphi\}$; a cut on $\forall x\,\varphi$ between this and $\bd{h_1-1}\Pi \cup \{\varphi[x/a], \forall x\,\varphi\}$ is licensed by the secondary induction hypothesis, the degree being unchanged and the sum of the heights having decreased, and yields $\vdash \Pi \cup \{\varphi[x/a]\}$. A cut on $\varphi[x/a]$, whose degree $\dg{\varphi}$ is smaller than $\dg{\forall x\,\varphi}$, then gives $\vdash \Pi$ by the principal induction hypothesis.

If $\theta = \exists x\,\varphi$, the argument is the mirror image: $r_1$ is $(\exists)$ with fresh witness $b$ and $r_2$ is $(\lnot\exists)$ with instantiating constant $a$; Lemma~\ref{lem:sost} turns $\bd{h_1-1}\Pi\cup\{\varphi[x/b]\}$ into $\bd{h_1-1}\Pi\cup\{\varphi[x/a]\}$, a cut on $\exists x\,\varphi$ at reduced height gives $\vdash \Pi \cup \{\lnot\varphi[x/a]\}$, and a cut on $\varphi[x/a]$ concludes.

The cases are exhaustive, the principal sign of a non-literal $\theta$ being one of $\land, \lor, \rightarrow, \lnot, \forall, \exists$, and in each of them the appeal to the induction hypothesis is licit. This completes the proof.
\end{proof}

The proof is syntactic and effective: it describes a procedure that, from two closed tableaux, builds a third. Cut admissibility could also be obtained semantically, from soundness and completeness -- if the two blocks are unsatisfiable then $\Pi$ is, since any model of $\Pi$ satisfies $\theta$ or $\lnot\theta$ -- but that route gives no construction, and it makes the result depend on K\"onig's lemma through Theorem~\ref{thm:completezza}. The syntactic proof is what will be used in Section~\ref{sec:LK} to obtain the Hauptsatz for $\mathrm{LK}$ by transfer.

\begin{cor}[Cut adds nothing]\label{cor:taglio-conservativo}
Let the block calculus be extended by the rule
\[
\frac{\Pi, \theta \qquad \Pi, \lnot\theta}{\Pi} \; \mathrm{Cut}.
\]
A block admits a closed tableau in the extended calculus if and only if it admits one in the original calculus.
\end{cor}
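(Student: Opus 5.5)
The ``if'' direction is immediate: a closed tableau of the original calculus is a closed tableau of the extended one in which Cut happens not to be used. For ``only if'', I would take a closed tableau $T$ in the extended calculus and eliminate its Cut applications one at a time, always choosing an \emph{uppermost} one, meaning one with no Cut below it in the tree. By Lemma~\ref{lem:finitezza}, whose K\"onig argument does not depend on which rules are present, $T$ is finite, so the induction is on the number of Cut applications in $T$.

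Fix an uppermost Cut, at a node labelled $\Pi$, with children $\Pi\cup\{\theta\}$ and $\Pi\cup\{\lnot\theta\}$. The subtableaux rooted at these two children contain no Cut, so they are closed tableaux of the original calculus for the two blocks. Theorem~\ref{thm:taglio} then supplies a cut-free closed tableau $T_\Pi$ for $\Pi$. I would replace the subtree at that node by $T_\Pi$. This strictly lowers the number of Cuts, because $T_\Pi$ contains none and the rest of $T$ is unchanged.

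The step needing care is that the grafted $T_\Pi$ must be a legitimate subtree in its new position. Its $\delta$ applications must introduce constants fresh for the \emph{whole} branch of $T$, including the part above the graft node. Conversely, its $\gamma$ applications must only use constants that are parameters of the branch. The $\gamma$ condition is harmless: $\mathrm{Par}$ grows as the branch lengthens, and the blocks of $T_\Pi$ begin with $\Pi$, so any parameter of $T_\Pi$'s own branch is a parameter of the extended branch. Freshness is the real point, and I would handle it with Lemma~\ref{lem:rinomina-tab}. Applying it to each of the finitely many $\delta$ applications of $T_\Pi$, I would rename their witnesses to pairwise distinct constants, different from $a_0$, that occur nowhere in $T$. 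Renaming preserves closure and does not alter the root $\Pi$.

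There is one more subtlety: the earlier blocks of the branch may contain constants that were introduced as witnesses but no longer occur in the current block, because a vacuous $\delta$ instance leaves its witness unrecorded. Choosing the new constants outside everything used anywhere in $T$, including all recorded witnesses, resolves this. It also makes the ``not previously used as a witness'' clause of Definition~\ref{def:nuova} hold. Repeating the replacement until no Cut remains yields a closed tableau of the original calculus for the root block.
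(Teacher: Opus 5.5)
Your proof is correct and is the paper's own argument: replace a Cut whose two subtrees are Cut-free (the paper's ``topmost'', in sequent orientation) by the tableau of Theorem~\ref{thm:taglio}, and iterate, with finiteness guaranteeing termination. The difference is that you spell out, via Lemma~\ref{lem:rinomina-tab}, the freshness bookkeeping for the graft that the paper leaves implicit; the new witness names should avoid the constants of $T_\Pi$ as well as those of $T$, as that lemma requires.

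One small point is left implicit by both you and the paper, and it is the mirror image of your $\gamma$ remark. Detaching the two subtrees as standalone tableaux for $\Pi\cup\{\theta\}$ and $\Pi\cup\{\lnot\theta\}$ can strand a $\gamma$ step that instantiates on a constant occurring only in blocks above the Cut node. Replacing all such constants uniformly by $a_0$, as in the proof of Lemma~\ref{lem:sost}, repairs this.
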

\begin{proof}
One direction is trivial. For the other, replace the topmost application of $\mathrm{Cut}$ by the construction of Theorem~\ref{thm:taglio} and iterate; the extended tableau being finite, the procedure terminates.
\end{proof}

\begin{cor}[Analytic cut suffices]\label{cor:taglio-analitico}
If a block $\Pi$ is refutable with the help of $\mathrm{Cut}$, it is refutable without it; consequently the cut formulae may be restricted, without loss, to generalized subformulae of $\Pi$ in the sense of Definition~\ref{def:sottoformula-gen}.
\end{cor}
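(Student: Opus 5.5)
The first half of the statement is Corollary~\ref{cor:taglio-conservativo} read in one direction: a refutation that uses $\mathrm{Cut}$ becomes a refutation without it. I would simply invoke that corollary, noting that it applies to any finite tableau of the extended calculus. By Lemma~\ref{lem:finitezza}, extended to the extended calculus by the same K\"onig argument, every closed tableau is finite.

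The content lies in the second half, and there the plan is to read it as a statement about the cut-free refutation obtained from the first half. Once $\vdash \Pi$ holds without $\mathrm{Cut}$, the set of cut formulae that are needed is empty. Any restriction on cut formulae, in particular to generalized subformulae of $\Pi$, is therefore satisfied vacuously: a tableau using only analytic cuts, namely none, exists. So the second clause follows from the first. No property of Definition~\ref{def:sottoformula-gen} is needed beyond the fact that it defines some class of formulae, and that class may even be empty.

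If a stronger reading is intended, I would prove it by induction on the number of $\mathrm{Cut}$ applications. In that reading, every cut in the given refutation is first replaced by an analytic one, whose formula must be a generalized subformula of $\Pi$, rather than all cuts being deleted. The inductive step would process the topmost cut as in Corollary~\ref{cor:taglio-conservativo}. It would then check that the transformations of Theorem~\ref{thm:taglio} introduce only cut formulae that are components or instances of earlier cut formulae, namely $\chi$, $\varphi$, $\psi$ and $\varphi[x/a]$, together with the cut formula itself at reduced height.

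The main obstacle in that reading is that the cut formula being eliminated need not itself be analytic. Its reductions are then not analytic either, so this route does not improve on deleting all cuts. For that reason I would commit to the first argument, completeness of the cut-free calculus relative to the extended one, which makes the restriction trivially available. I would add a remark that the substantive analyticity content, namely that cut-free tableaux contain only generalized subformulae of the root, is a separate property of the cut-free rules, to be checked rule by rule using Lemma~\ref{lem:peso}-style inspection of the components.
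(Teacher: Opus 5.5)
Your proof is correct. For the first clause it coincides with the paper's, which also simply invokes Corollary~\ref{cor:taglio-conservativo}. For the second clause the routes differ.

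The paper cites Theorem~\ref{thm:subformula}, whose proof in Section~\ref{sec:analiticita} is independent of the cut analysis. It uses that theorem to observe that the cut-free refutation supplied by the first clause has every block inside $\Subg(\Pi)$, so any formula on which one might wish to cut in it can be taken there. You instead note that a calculus with cut restricted to $\Subg(\Pi)$ sits between the cut-free calculus and the full-cut calculus. The clause therefore holds vacuously, and no property of Definition~\ref{def:sottoformula-gen} is used.

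Your route is more economical. It also makes explicit that, read literally, the second clause adds nothing to cut admissibility. The paper's route spends the subformula theorem to give the clause the non-vacuous content it wants for the comparison with D'Agostino and Mondadori: the refutation one ends up with is analytic in all of its formulae, not merely in its (absent) cut formulae.

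The "substantive analyticity content" you set aside in your final remark is exactly Theorem~\ref{thm:subformula}. You can cite it rather than re-verify it rule by rule. Your abandoned stronger reading, which would transform each cut into an analytic one while keeping cuts, is not attempted in the paper either, so nothing is lost by dropping it.
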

\begin{proof}
Immediate from Corollary~\ref{cor:taglio-conservativo} and Theorem~\ref{thm:subformula}, whose proof is given in Section~\ref{sec:analiticita} and does not depend on the present section: the cut-free refutation whose existence is thereby guaranteed contains only generalized subformulae of $\Pi$, and a fortiori every formula on which one might wish to cut may be taken among them.
\end{proof}

Corollary~\ref{cor:taglio-analitico} places the calculus on the map drawn by D'Agostino and Mondadori \cite{DAgostinoMondadori1994}: the restriction of cut to subformulae of the conclusion, which they introduce in order to recover concision without sacrificing analyticity, is here not an addition but a consequence.

\section{Soundness, completeness, compactness}\label{sec:metateoria}

We return to the semantics of Section~\ref{sec:preliminari}, under Convention~\ref{conv:relazionale}.

\subsection{Soundness}

The heart of soundness is a preservation property: no rule can lead from a satisfiable block to a family of children all of which are unsatisfiable. We prove it by an inspection of the rules, and we prove at the same time the converse, which costs four lines and says that no rule invents satisfiability either.

\begin{lem}[Preservation of satisfiability]\label{lem:conservazione}
Let a rule be applied to the block $\Pi'$, with children $\Pi_1$ and possibly $\Pi_2$. Then $\Pi'$ is satisfiable if and only if at least one child is satisfiable.
\end{lem}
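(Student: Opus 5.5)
The plan is a case inspection of the rules, proving each direction separately for each family. Satisfiability here means satisfiability of a set of sentences of $\Ll$ by some model $M \in \Mod$, so models may be modified on constants that do not occur in the block, with Lemma~\ref{lem:coincidenza} to keep truth values fixed.

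\emph{Left to right.} Assume $M \vDash \Pi'$. For the $\alpha$ rules, $M$ itself satisfies the single child, read off the truth clauses of Definition~\ref{def:verita}. For $\lnot\lnot$ this uses $M \vDash \lnot\lnot\varphi$ iff $M \vDash \varphi$; for $\lnot\lor$ and $\lnot\rightarrow$ it is the De Morgan reading of the clauses. For the $\beta$ rules, the disjunctive clause gives that $M$ satisfies one of the two children. For the $\gamma$ rules, $M \vDash \forall x\,\varphi$ gives $M \vDash \varphi[x/c_{v(\adown)}]$. Lemma~\ref{lem:sostituzione} then yields $M \vDash \varphi[x/\adown]$, and likewise for $\lnot\exists$.

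The $\delta$ rules are the main obstacle, because the child contains the fresh constant $\aup$, which $M$ may interpret badly. From $M \vDash \exists x\,\varphi$ one picks $d \in D$ with $M \vDash \varphi[x/c_d]$. One then forms $M'$ equal to $M$ except that $i'(\aup) = d$. Since $\aup$ is fresh for the branch, it does not occur in $\Pi'$. By Lemma~\ref{lem:coincidenza}, $M'$ still satisfies every formula of the context, and also $\varphi[x/c_d]$. Lemma~\ref{lem:sostituzione}, applied in $M'$ with $v'(\aup) = d$, gives $M' \vDash \varphi[x/\aup]$. The case $\lnot\forall$ is the same, using the clause for $\lnot\forall$, which via the negation clause asserts that some $d$ falsifies $\varphi[x/c_d]$.

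\emph{Right to left.} This direction should be easier. If a child is satisfied by $M$, then $M$ satisfies the context, and one checks that it satisfies the principal formula. For $\alpha$ and $\beta$ this follows again from the truth clauses. For $\gamma$ it is trivial, since the principal formula is retained in the child. For $\delta$, $M \vDash \varphi[x/\aup]$ gives, by Lemma~\ref{lem:sostituzione}, $M \vDash \varphi[x/c_{v(\aup)}]$, hence $M \vDash \exists x\,\varphi$, with the analogous argument for $\lnot\forall x\,\varphi$. The only point requiring care throughout is that instances are substitutions of closed terms into formulae with at most $x$ free, which is exactly the hypothesis of Lemma~\ref{lem:sostituzione}.
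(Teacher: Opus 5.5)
Your proposal is correct and follows the paper's proof: the same case inspection by rule families, with the fresh constant reinterpreted as a witness via the coincidence and semantic substitution lemmas in the $\delta$ case. The converse direction is handled the same way, by the truth clauses for $\alpha$ and $\beta$, containment of the parent for $\gamma$, and Lemma~\ref{lem:sostituzione} for $\delta$.
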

\begin{proof}
($\Rightarrow$) Let $M \vDash \Pi'$, let $\varphi_0 \in \Pi'$ be the principal formula, and put $\Pi = \Pi' \setminus \{\varphi_0\}$, so that $M \vDash \Pi$. The eleven rules are treated in four groups.

\emph{Rules $\alpha$.} If $\varphi_0 = \varphi\land\psi$, the conjunction clause gives $M \vDash \varphi$ and $M \vDash \psi$, so $M$ satisfies the single child $\Pi, \varphi, \psi$. If $\varphi_0 = \lnot\lnot\varphi$, the negation clause applied twice gives $M \vDash \varphi$. If $\varphi_0 = \lnot(\varphi\lor\psi)$, then $M \nvDash \varphi\lor\psi$, so $M \nvDash \varphi$ and $M \nvDash \psi$, that is $M \vDash \lnot\varphi$ and $M \vDash \lnot\psi$. If $\varphi_0 = \lnot(\varphi\rightarrow\psi)$, then $M \vDash \varphi$ and $M \nvDash \psi$. In each case the unique child is satisfied by $M$ itself.

\emph{Rules $\beta$.} If $\varphi_0 = \varphi\lor\psi$, then $M \vDash \varphi$ or $M \vDash \psi$, and accordingly $M$ satisfies the first or the second child. If $\varphi_0 = \lnot(\varphi\land\psi)$, then $M \nvDash \varphi$ or $M \nvDash \psi$. If $\varphi_0 = \varphi\rightarrow\psi$, then $M \vDash \lnot\varphi$ or $M \vDash \psi$.

\emph{Rules $\gamma$.} Let $\varphi_0 = \forall x\,\varphi$ and let $\adown$ be the constant on which one instantiates; no freshness is required of it, so the case covers the reserved parameter $a_0$ as well. By the universal clause $M \vDash \varphi[x/c_d]$ for every $d \in D$, in particular for $d = v(\adown)$, and Lemma~\ref{lem:sostituzione} converts this into $M \vDash \varphi[x/\adown]$. Since the principal formula is retained, $M$ satisfies the whole child $\Pi, \forall x\,\varphi, \varphi[x/\adown]$. If $\varphi_0 = \lnot\exists x\,\varphi$, the existential clause gives $M \nvDash \varphi[x/c_d]$ for every $d$, and the same step yields $M \vDash \lnot\varphi[x/\adown]$.

\emph{Rules $\delta$.} Let $\varphi_0 = \exists x\,\varphi$; by the existential clause there is $d \in D$ with $M \vDash \varphi[x/c_d]$. Let $\aup$ be the fresh constant of the rule and let $M'$ be the model that agrees with $M$ everywhere except that $i'(\aup) = d$. Then $v_{M'}(\aup) = d$, so Lemma~\ref{lem:sostituzione} gives $M' \vDash \varphi[x/\aup]$ if and only if $M' \vDash \varphi[x/c_d]$; and $\aup$ does not occur in $\varphi[x/c_d]$, so Lemma~\ref{lem:coincidenza} transfers $M \vDash \varphi[x/c_d]$ to $M'$. Moreover $M' \vDash \chi$ for every $\chi \in \Pi$, since $\aup$ occurs in no such $\chi$ by the freshness condition and Lemma~\ref{lem:coincidenza} applies. Hence $M' \vDash \Pi, \varphi[x/\aup]$. It is exactly freshness that guarantees that the reinterpretation of $\aup$ does not falsify the context; without it the step would fail, as Remark~\ref{oss:novita} shows. The rule $(\lnot\forall)$ is treated in the same way, starting from a $d$ with $M \nvDash \varphi[x/c_d]$.

($\Leftarrow$) Suppose some child is satisfied by $M$. For the rules $\alpha$ and $\beta$ the truth clauses give $M \vDash \varphi_0$ at once -- from $M \vDash \varphi$ and $M \vDash \psi$ one recovers $M \vDash \varphi\land\psi$, from $M \vDash \varphi$ one recovers $M \vDash \varphi\lor\psi$, and so on -- and the context is common. For a $\gamma$ rule the child contains the parent, so nothing is to be shown. For a $\delta$ rule, from $M \vDash \varphi[x/\aup]$ and Lemma~\ref{lem:sostituzione} we get $M \vDash \varphi[x/c_{v(\aup)}]$, whence $M \vDash \exists x\,\varphi$ by the existential clause; and the rest of the block is untouched.
\end{proof}

\begin{thm}[Soundness]\label{thm:correttezza}
If there is a closed tableau for $\Pi$, then $\Pi$ is not satisfiable. Consequently, if the tableau for the block associated with $\Gamma \segue \Delta$ closes, the sequent is valid.
\end{thm}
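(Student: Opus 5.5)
The plan is to derive soundness directly from the preservation lemma (Lemma~\ref{lem:conservazione}) and the soundness of the closure criterion (Proposition~\ref{prop:chiusura}). I will argue by induction on the height of a closed tableau, or equivalently on the height of nodes counted from the leaves. Lemma~\ref{lem:finitezza} guarantees that a closed tableau is a finite tree, so such a height is well defined.

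The claim to prove by induction is that every block labelling a node of the closed tableau is unsatisfiable.

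\begin{itemize}
\item \emph{Base case.} At a leaf the block is closed on a complementary pair of literals, so it is unsatisfiable by Proposition~\ref{prop:chiusura}.
\item \emph{Inductive step.} At an internal node with block $\Pi'$, some rule has been applied, producing one or two children. Each child is unsatisfiable by the induction hypothesis. The ($\Rightarrow$) direction of Lemma~\ref{lem:conservazione} says that if $\Pi'$ were satisfiable, some child would be satisfiable. Hence $\Pi'$ is unsatisfiable.
\end{itemize}

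Applied at the root, this shows that $\Pi$ is not satisfiable.

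For the second sentence of the statement, let $\Pi = \Gamma \cup \lnot[\Delta]$ be the block associated with $\Gamma \segue \Delta$, and suppose its tableau closes. The first part gives that $\Pi$ is unsatisfiable, and the equivalence \eqref{eq:cardine} then yields that $\Gamma \segue \Delta$ is valid.

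The one point needing care is the $\delta$ case inside Lemma~\ref{lem:conservazione}. There the model is changed to $M'$, which reinterprets the fresh constant. The induction above only uses the existential statement "some child is satisfiable", so this change of model is harmless: no single fixed model needs to be carried through the tableau. I therefore expect no real obstacle. The substantive work, namely freshness and the coincidence lemma, was already done in Lemma~\ref{lem:conservazione}.
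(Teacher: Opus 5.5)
Your proof is correct and is essentially the paper's argument. The paper assumes $\Pi$ satisfiable and uses Lemma~\ref{lem:conservazione} to descend through satisfiable children until, by Lemma~\ref{lem:finitezza}, it reaches a leaf that is both satisfiable and closed, contradicting Proposition~\ref{prop:chiusura}; this is the top-down, contrapositive form of your bottom-up induction on height, and the passage to sequents via \eqref{eq:cardine} is the same in both.
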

\begin{proof}
Suppose $\Pi$ satisfiable and let $T$ be a closed tableau for it. We construct by recursion a descending sequence of nodes $\Pi = \Pi_0, \Pi_1, \ldots$ of $T$, each a child of the preceding, along which every block is satisfiable: the root is satisfiable by hypothesis, and if $\Pi_k$ is satisfiable and not a leaf, the rule applied to it has, by Lemma~\ref{lem:conservazione}, a satisfiable child, which we take as $\Pi_{k+1}$. By Lemma~\ref{lem:finitezza} the tableau is finite, so the sequence reaches a leaf $\Pi_n$ that is satisfiable. But every leaf of a closed tableau is a closed block, and a closed block is unsatisfiable by Proposition~\ref{prop:chiusura}. The claim about sequents follows from \eqref{eq:cardine}.
\end{proof}

\subsection{Persistence}

Before turning to completeness we isolate a monotonicity property of branches, the exact counterpart of the fact that no rule destroys a literal.

\begin{lem}[Persistence]\label{lem:persistenza}
Let $\mathcal R = \Pi_0, \Pi_1, \ldots$ be a branch of a tableau.
\begin{enumerate}
\item[\textup{(i)}] If $\psi \in \Pi_k$ and $\psi$ is not the principal formula of any application of a rule of type $\alpha$, $\beta$ or $\delta$ performed on $\mathcal R$ at a stage $\ge k$, then $\psi \in \Pi_j$ for every $j \ge k$.
\item[\textup{(ii)}] In particular every literal, and every formula of type $\gamma$, that appears in $\Pi_k$ belongs to $\Pi_j$ for every $j \ge k$.
\end{enumerate}
\end{lem}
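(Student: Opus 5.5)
Part (i) goes by induction on $j \ge k$, inspecting the single rule application that carries $\Pi_j$ to $\Pi_{j+1}$. The base case $j = k$ is the hypothesis $\psi \in \Pi_k$. For the step, assume $\psi \in \Pi_j$ and let $r$ be the rule applied at $\Pi_j$ along $\mathcal R$, with principal formula $\chi$. By the description of the rules in Sections~\ref{sec:alfabeta} and \ref{sec:gammadelta}, every rule writes its parent as $\Pi, \chi$ and every child as $\Pi$ together with some further formulae. So the context $\Pi = \Pi_j \setminus \{\chi\}$ is contained in each child, and in particular in $\Pi_{j+1}$. Two cases remain, according to whether $\psi$ is the principal formula.

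If $\psi \neq \chi$, then $\psi$ lies in the context, and hence $\psi \in \Pi_{j+1}$.

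If $\psi = \chi$, then $r$ cannot be of type $\alpha$, $\beta$ or $\delta$, since otherwise $\psi$ would be the principal formula of such an application at stage $j \ge k$, against the hypothesis. So $r$ is a $\gamma$ rule, and a $\gamma$ rule copies its principal formula into the unique child. Again $\psi \in \Pi_{j+1}$.

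If $\Pi_j$ is a leaf, there is no $\Pi_{j+1}$ and nothing to prove. This holds whether the leaf is a closed block, to which no rule is applied, or the last node of a finite branch.

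Part (ii) is a corollary of (i). It is enough to check that neither a literal nor a formula of type $\gamma$ is ever the principal formula of an $\alpha$, $\beta$ or $\delta$ application. For literals this is the observation recorded before Lemma~\ref{lem:finitezza}: no rule has a literal as its principal formula. A formula of type $\gamma$ has the form $\forall x\,\varphi$ or $\lnot\exists x\,\varphi$. The only rules whose principal formula has one of these shapes are $(\forall)$ and $(\lnot\exists)$, because the $\alpha$, $\beta$ and $\delta$ rules apply to $\lnot\lnot$, to the binary connectives and their negations, and to $\exists$ and $\lnot\forall$. So the hypothesis of (i) holds for all such formulae, and they persist.

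I do not expect a real obstacle. The one point needing care is the bookkeeping with sets. When the principal formula also appears among the formulae introduced below the line, the child still contains it, but this only helps the conclusion. When $\psi \neq \chi$, I must make sure $\psi$ really survives the replacement of $\chi$. That follows from reading the rules as $\Pi_{j+1} = (\Pi_j \setminus \{\chi\}) \cup \{\ldots\}$, or $\Pi_j \cup \{\ldots\}$ for $\gamma$, so no formula other than $\chi$ is ever removed.
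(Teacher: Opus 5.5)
Your proof is correct and follows the paper's own argument. The paper also inducts along the branch, using that the $\alpha$, $\beta$ and $\delta$ rules remove only their principal formula while the $\gamma$ rules remove nothing, and then gets (ii) by noting that literals are never principal and that $\gamma$ formulae are principal only for their own rules, which retain them; your explicit check of the shapes of $\gamma$ formulae spells out what the paper leaves implicit.
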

\begin{proof}
(i) By induction on $j - k$. Each rule removes from the parent block its principal formula, in the cases $\alpha$, $\beta$, $\delta$, or nothing at all, in the case $\gamma$; the residual context is transmitted unchanged to the child lying on $\mathcal R$. A formula that is never principal for a rule of the first three types therefore passes unchanged through every step. (ii) No rule has a literal as principal formula, so literals fall under (i); and a $\gamma$ formula, even when principal, is retained by its own rule, so it too falls under (i) as stated.
\end{proof}

The point of stating (i) with the restriction to $\alpha$, $\beta$ and $\delta$, rather than to all rules, is precisely the case of the $\gamma$ formulae, which are principal and persist nonetheless; the verification of clause (H4) in the completeness proof appeals to exactly that.

\subsection{Completeness}

We argue by contraposition, following the scheme of Smullyan's Hintikka model: a branch that refuses to close is read as a description of a model, and the description is complete enough to be realized. The notion that makes this precise is the following, in the form adapted to the relational fragment.

\begin{dfn}[First-order Hintikka set]\label{def:hintikka}
Let $C$ be a non-empty countable set of constants. A set $H$ of sentences whose closed terms belong to $C$ is a \emph{Hintikka set over $C$} if:
\begin{enumerate}
\item[\textup{(H1)}] no atomic formula occurs in $H$ together with its own negation;
\item[\textup{(H2)}] if a formula of type $\alpha$ is in $H$, both its components are in $H$;
\item[\textup{(H3)}] if a formula of type $\beta$ is in $H$, at least one of its components is in $H$;
\item[\textup{(H4)}] if $\forall x\,\varphi \in H$ then $\varphi[x/a] \in H$ for every $a \in C$, and if $\lnot\exists x\,\varphi \in H$ then $\lnot\varphi[x/a] \in H$ for every $a \in C$;
\item[\textup{(H5)}] if $\exists x\,\varphi \in H$ then $\varphi[x/a] \in H$ for some $a \in C$, and if $\lnot\forall x\,\varphi \in H$ then $\lnot\varphi[x/a] \in H$ for some $a \in C$.
\end{enumerate}
\end{dfn}

\begin{lem}[Hintikka's model lemma]\label{lem:hintikka}
Every Hintikka set $H$ over $C$ is satisfiable, in a model whose domain is $C$.
\end{lem}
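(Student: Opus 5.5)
We build the canonical term model directly from $H$ and verify it by induction on the weight. Set $M_H = (C, i)$ with $i(a) = a$ for every $a \in C$. For every constant of $\Ll$ not in $C$ we put $i(a) = a_0'$, where $a_0'$ is some fixed element of $C$; this is possible because $C$ is non-empty, and the choice is harmless because such constants do not occur in $H$. For each predicate we put $(a_1,\ldots,a_n) \in i(P)$ iff $P(a_1,\ldots,a_n) \in H$. By Convention~\ref{conv:relazionale} there are no function letters, so nothing else needs interpreting. The valuation then satisfies $v(a) = a$ for $a \in C$ and $v(c_d) = d$. We will prove, for every sentence $\varphi$ of $\Ll$ whose constants lie in $C$, that $\varphi \in H$ implies $M_H \vDash \varphi$.

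We argue by induction on $\wt{\varphi}$. Degree is unsuitable here, because clauses (H2) and (H3) for $\lnot\lor$, $\lnot\land$ and $\lnot\rightarrow$ produce components such as $\lnot\varphi$ whose degree need not fall below that of the parent. Lemma~\ref{lem:peso} shows that every component has strictly smaller weight, and substitution preserves weight, so instances are also covered.

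The cases run as follows.
\begin{enumerate}
\item[\textup{(a)}] \emph{Atoms.} $P(\vec a) \in H$ gives $M_H \vDash P(\vec a)$ by the definition of $i(P)$.
\item[\textup{(b)}] \emph{Negated atoms.} If $\lnot P(\vec a) \in H$, then (H1) gives $P(\vec a) \notin H$, hence $M_H \nvDash P(\vec a)$.
\item[\textup{(c)}] \emph{Propositional cases $\alpha$ and $\beta$, including $\lnot\lnot$.} (H2) and (H3) supply components of smaller weight in $H$; the induction hypothesis makes them true, and the truth clauses of Definition~\ref{def:verita} lift this to the principal formula.
\item[\textup{(d)}] \emph{$\forall x\,\varphi \in H$.} We need $M_H \vDash \varphi[x/c_d]$ for every $d \in C$. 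By (H4), $\varphi[x/d] \in H$ for each $d \in C$, and this formula has weight $\wt{\varphi} < \wt{\forall x\,\varphi}$, so the induction hypothesis gives $M_H \vDash \varphi[x/d]$. Since $v(d) = d$, Lemma~\ref{lem:sostituzione} converts this into $M_H \vDash \varphi[x/c_d]$.
\item[\textup{(e)}] \emph{$\lnot\exists x\,\varphi \in H$.} This is handled in the same way as (d), using the second half of (H4).
\item[\textup{(f)}] \emph{$\exists x\,\varphi$ and $\lnot\forall x\,\varphi$.} These use (H5) in the same manner, with a single witness $a \in C$ and $d = a$.
\end{enumerate}

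The main point of care is this translation step between the constants of $C$, which serve as elements of the domain, and the new names $c_d$ of the expanded language. That step is exactly what Lemma~\ref{lem:sostituzione} provides once $v(a) = a$ has been checked. We must also check that the induction really ranges over sentences of $\Ll$ with constants in $C$: instances taken on elements of $C$ stay within that class, so the hypothesis applies. It follows that $M_H \vDash H$, and the domain of $M_H$ is $C$, as the statement requires.
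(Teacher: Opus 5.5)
Your proof is correct, and it builds the same canonical model as the paper, but it organises the induction differently.

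\textbf{The two inductions.} The paper inducts on the degree and proves a two-sided claim simultaneously: $\varphi \in H$ implies $M_H \vDash \varphi$, and $\lnot\varphi \in H$ implies $M_H \nvDash \varphi$. A negated formula in $H$ is then handled by the negative half of the hypothesis applied to the unnegated formula. You prove only the positive half. You treat the negated compounds as formulae of type $\alpha$, $\beta$, $\gamma$, $\delta$ in their own right. You secure the strict decrease by switching to the weight, where Lemma~\ref{lem:peso} and the invariance of weight under substitution give exactly what each case needs.

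\textbf{What each buys.} Your version gives a uniform case analysis driven by the rule classification and reuses a lemma already proved. The paper's version keeps the more familiar measure, at the price of the two-sided statement.

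\textbf{A correction to your motivation.} For $\lnot\lor$, $\lnot\land$ and $\lnot\rightarrow$, each component does have strictly smaller degree than its parent. Only the \emph{sum} of the degrees fails to drop for $\lnot\lor$, which is the paper's remark after Definition~\ref{def:grado}. The case that actually defeats a one-sided induction on degree is the $\beta$ formula $\varphi\rightarrow\psi$ with $\psi$ atomic. Its component $\lnot\varphi$ has degree $\dg{\varphi}+1 = \dg{\varphi\rightarrow\psi}$. The paper's two-sided claim avoids this by invoking the negative half on $\varphi$ rather than the positive half on $\lnot\varphi$. Your choice of the weight handles the same case, so the proof stands; only the stated reason for rejecting the degree names the wrong rules.
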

\begin{proof}
Let $M_H = (D_H, i_H)$ with $D_H := C$, which is non-empty; interpret each parameter as itself, $i_H(a) := a$, so that $v(a) = a$ for $a \in C$, and put $(a_1,\ldots,a_n) \in i_H(P) :\Leftrightarrow P(a_1,\ldots,a_n) \in H$. The constants of $\Ll$ outside $C$ are interpreted arbitrarily; by Lemma~\ref{lem:coincidenza} this does not affect the sentences whose constants lie in $C$.

We show by induction on $\dg{\varphi}$, for every sentence $\varphi$ with constants in $C$, that $\varphi \in H$ implies $M_H \vDash \varphi$ and $\lnot\varphi \in H$ implies $M_H \nvDash \varphi$. The induction hypothesis applies to instances $\psi[x/a]$ of proper subformulae because $\dg{\psi[x/a]} = \dg{\psi}$.

If $\varphi = P(a_1,\ldots,a_n)$ is atomic and $\varphi \in H$, then $M_H \vDash \varphi$ by construction; if $\lnot\varphi \in H$, then $\varphi \notin H$ by (H1), so $M_H \nvDash \varphi$.

For the step we distinguish the principal sign. \emph{Negation}: if $\lnot\psi \in H$, the second half of the induction hypothesis on $\psi$ gives $M_H \nvDash \psi$, that is $M_H \vDash \lnot\psi$; if $\lnot\lnot\psi \in H$ then $\psi \in H$ by (H2), and the first half gives $M_H \vDash \psi$, that is $M_H \nvDash \lnot\psi$. \emph{Conjunction}: if $\psi\land\chi \in H$ then $\psi,\chi \in H$ by (H2) and $M_H \vDash \psi\land\chi$; if $\lnot(\psi\land\chi) \in H$ then $\lnot\psi \in H$ or $\lnot\chi \in H$ by (H3), and $M_H \nvDash \psi\land\chi$. \emph{Disjunction} and \emph{implication} are treated in the same way, the roles of (H2) and (H3) exchanging according as the formula is of type $\alpha$ or $\beta$.

\emph{Universal quantification}, $\varphi = \forall x\,\psi$. If $\forall x\,\psi \in H$, clause (H4) gives $\psi[x/a] \in H$ for every $a \in C$, whence $M_H \vDash \psi[x/a]$ by the induction hypothesis. Now every element $d$ of $D_H = C$ is a constant $a$ with $v(a) = a = d$, so Lemma~\ref{lem:sostituzione} turns each of these facts into $M_H \vDash \psi[x/c_d]$; as $d$ ranges over $D_H$ the universal clause yields $M_H \vDash \forall x\,\psi$. If $\lnot\forall x\,\psi \in H$, clause (H5) gives $\lnot\psi[x/a] \in H$ for some $a \in C$, hence $M_H \nvDash \psi[x/a]$ and, again by Lemma~\ref{lem:sostituzione}, $M_H \nvDash \psi[x/c_a]$: the element $a$ witnesses the failure of the universal clause. \emph{Existential quantification} is dual, (H5) providing the witness and (H4) the exhaustion of the domain.

Both universal steps depend on the identity $D_H = C$: every element of the domain is named by a parameter, so that the instances on parameters supplied by (H4) exhaust the witnesses that the quantifier clauses require.
\end{proof}

\begin{rem}[The case with function symbols]\label{oss:herbrand}
If $\Fun \neq \varnothing$, the correct canonical domain is the Herbrand universe $\Ter^0$ of all closed terms, with $i_H(f)(t_1,\ldots,t_n) := f(t_1,\ldots,t_n)$. The universal step then requires $\psi[x/t] \in H$ for every closed term $t$, not merely for every constant, so that clause (H4), the $\gamma$ rule and Definition~\ref{def:parametri} must be reformulated with $\adown$ ranging over $\Ter^0$. With this modification, and with the corresponding notion of fair strategy, soundness and completeness carry over unchanged, as do the results of Section~\ref{sec:strutturale}; what fails is only the finiteness of the set of terms, and with it Proposition~\ref{prop:crescita}. We keep to the relational fragment in the body so as not to burden the notation.
\end{rem}

\begin{thm}[Completeness]\label{thm:completezza}
If $\Pi$ is not satisfiable, then every tableau for $\Pi$ built according to a fair strategy closes. Consequently, if $\Gamma \segue \Delta$ is valid, every fair tableau for the associated block closes.
\end{thm}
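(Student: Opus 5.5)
We argue by contraposition: given a fair tableau $T$ for $\Pi$ that does not close, we produce a model of $\Pi$. Since $T$ is not closed, it has a branch $\mathcal R$ that is never closed. There are two cases.

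\begin{itemize}
\item If $\mathcal R$ is finite, it ends in a block that is not closed and admits no productive application, that is, a completed open leaf.
\item If $\mathcal R$ is infinite, no block on it is closed.
\end{itemize}

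Treating the finite case as a branch whose last block repeats forever, we set
\[
H := \bigcup_{k} \Pi_k \setminus \{\text{formulae principal in an $\alpha$, $\beta$ or $\delta$ application on } \mathcal R\}
\]
and $C := \mathrm{Par}(\mathcal R)$. Taking the union of all blocks minus the consumed formulae would be awkward. The cleaner choice is to let $H$ be the set of all formulae occurring in \emph{some} block of $\mathcal R$, and to verify the Hintikka conditions (H2)--(H5) directly for this union. By construction $C$ is non-empty, since it contains $a_0$, and countable, and every constant of $H$ lies in $C$. The plan is to show that $H$ is a Hintikka set over $C$ and then apply Lemma~\ref{lem:hintikka}. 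The resulting model satisfies $H \supseteq \Pi_0 = \Pi$, contradicting unsatisfiability.

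Conditions (H2), (H3) and (H5) follow from (F1). Suppose a formula of type $\alpha$, $\beta$ or $\delta$ occurs in some block of $\mathcal R$. Then it is principal at some finite stage, and the child lying on $\mathcal R$ contains the required components. For $\alpha$ this is both components, for $\beta$ the one taken by the branch, and for $\delta$ the instance on the fresh witness, which becomes a parameter of $\mathcal R$. For a completed open finite leaf, the same conclusion holds because every $\alpha/\beta/\delta$ formula has already been consumed on the path. Indeed, the last block contains none of them, since any such formula would give a productive application. Condition (H4) follows from (F2) in the infinite case, and in the finite case from the absence of productive $\gamma$ applications on parameters of the branch. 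Here we use Lemma~\ref{lem:persistenza}(ii): $\gamma$ formulae persist to the leaf, so every one of their instances on $C$ is already present there.

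The main obstacle is (H1). An atom $p$ and its negation $\lnot p$ may each lie in $H$ while occurring in different blocks. Lemma~\ref{lem:persistenza}(ii) resolves this. Literals are never principal, so once $p$ appears in $\Pi_k$ and $\lnot p$ in $\Pi_{k'}$, both belong to $\Pi_{\max(k,k')}$. That block would then be closed, contradicting the choice of $\mathcal R$.

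A secondary point to check is that the instances required by (H4) and (H5) on parameters are genuinely sentences with constants in $C$. This holds because substitution is only ever made on constants, under Convention~\ref{conv:relazionale}. The statement about sequents then follows from \eqref{eq:cardine}.
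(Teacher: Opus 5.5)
Your proof is correct and is, in substance, the paper's. Both argue by contraposition, take $H$ to be the union of all blocks of a never-closed branch $\mathcal R$ with $C=\mathrm{Par}(\mathcal R)$, obtain (H1) from the persistence of literals and the other clauses from fairness or from completedness of a finite leaf, and conclude with Lemma~\ref{lem:hintikka}. Delete your first displayed definition of $H$: removing the consumed formulae would lose $\Pi\subseteq H$, which you need at the end.

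Two points of arrangement differ.

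First, you take $\mathcal R$ directly from the definition of a closed tableau: every branch closed, branches being possibly infinite. The paper instead shows that $T$ is infinite and then extracts an infinite branch by K\"onig's lemma. Your shortcut is legitimate. K\"onig's lemma is not thereby avoided, only relocated to Lemma~\ref{lem:finitezza}, where it converts ``every branch closed'' into a finite closed tableau, i.e.\ into $\vdash\Pi$. The paper's arrangement keeps that step inside the completeness proof, as Remark~\ref{oss:wkl} records.

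Second, you verify (H2), (H3) and (H5) by (F1) on finite branches as well, where the paper's Case~1 uses only completedness. Do the same for (H4) with (F2), and drop the claim that a finite never-closed branch ends in a completed open leaf, because fairness does not guarantee it. Take the paper's own example in Section~\ref{sec:analiticita}: the non-repeating procedure of Proposition~\ref{prop:esistenza-equa}, applied to $\{\forall x\,(P(x)\land Q(x))\}$, stops at $\{\forall x\,(P(x)\land Q(x)), P(a_0), Q(a_0)\}$. There the $\gamma$ application on $a_0$ is productive again, so completedness gives you no (H4). (F2), which Definition~\ref{def:equa} imposes on every never-closed branch, finite or infinite, does give it.

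The paper's two-case split rests on the same tacit assumption (``were $T$ finite, every leaf would be a terminated block''). Your uniform fairness argument, once that claim is dropped, is therefore the more robust of the two.
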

\begin{proof}
By contraposition: if a fair tableau $T$ for $\Pi$ does not close, then $\Pi$ is satisfiable. Two cases arise.

\emph{Case 1: $T$ has a finite branch $\mathcal R = \Pi_0,\ldots,\Pi_k$ whose last block is completed open.} Put $H := \Pi_0 \cup \cdots \cup \Pi_k$ and $C := \mathrm{Par}(\mathcal R)$, non-empty by Definition~\ref{def:parametri}. Fairness plays no role here, the verification resting on the completedness of $\Pi_k$. For (H1): literals persist to $\Pi_k$ by Lemma~\ref{lem:persistenza}(ii), so an atom together with its negation in $H$ would make $\Pi_k$ closed. For (H2), (H3) and (H5): let $\psi \in \Pi_j$ be of type $\alpha$, $\beta$ or $\delta$. Since the rules of these types remove their principal formula and every application of them is productive, $\psi$ cannot belong to the completed block $\Pi_k$, for otherwise its rule would still be productively applicable; by Lemma~\ref{lem:persistenza}(i), then, $\psi$ was principal at some stage between $j$ and $k$, and that application placed into the branch both components of $\psi$ if it is $\alpha$, the component lying on $\mathcal R$ if it is $\beta$, and the instance on a fresh parameter of $C$ if it is $\delta$. For (H4): a $\gamma$ formula persists to $\Pi_k$ by Lemma~\ref{lem:persistenza}(ii), and for every $a \in C$ the $\gamma$ application on $a$ is licit at $\Pi_k$, hence unproductive by completedness, which says precisely that the instance already belongs to $\Pi_k \subseteq H$.

\emph{Case 2: no branch of $T$ terminates in a completed open block.} Since $T$ does not close, not every branch terminates in a closed block either; were $T$ finite, every leaf would be a terminated block, closed or completed open, and having excluded the second $T$ would be closed. Hence $T$ is infinite, and being finitely branching it possesses an infinite branch $\mathcal R = \Pi_0, \Pi_1, \ldots$ by K\"onig's lemma \cite{Konig1927}. No block of $\mathcal R$ is closed, a closed block being a leaf. Put $H := \bigcup_k \Pi_k$ and $C := \mathrm{Par}(\mathcal R)$. Clause (H1) holds as in Case 1: two complementary literals in $H$ would, by persistence, belong to a common $\Pi_n$, which would then be closed. Clauses (H2) and (H3) hold by (F1), every $\alpha$ or $\beta$ formula on $\mathcal R$ being decomposed at a finite stage, and (H5) likewise. Clause (H4) holds by (F2), every $\gamma$ formula being instantiated on every parameter of $\mathcal R$; here the retention of the principal formula in the $\gamma$ rules is what keeps it available for all the instantiations required.

In both cases $H$ is a Hintikka set over $C$, and every formula of $H$ is a sentence, since the root consists of sentences and every rule replaces sentences by sentences. Lemma~\ref{lem:hintikka} provides $M_H \vDash H \supseteq \Pi$, so $\Pi$ is satisfiable. The claim about sequents follows from \eqref{eq:cardine}.
\end{proof}

\begin{cor}[Adequacy]\label{cor:adeguatezza}
$\Gamma \segue \Delta$ is valid if and only if every fair tableau for $\Pi = \Gamma \cup \lnot[\Delta]$ closes. In particular $\varphi$ is a logical truth if and only if the tableau rooted at $\{\lnot\varphi\}$ closes.
\end{cor}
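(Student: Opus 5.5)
The corollary combines Theorem~\ref{thm:correttezza} and Theorem~\ref{thm:completezza} through the hinge \eqref{eq:cardine}, and I would prove the biconditional in its two directions separately.

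\emph{Left to right.} Assume $\Gamma \segue \Delta$ is valid. By \eqref{eq:cardine} the block $\Pi = \Gamma \cup \lnot[\Delta]$ is not satisfiable. Theorem~\ref{thm:completezza} then says that every tableau for $\Pi$ built according to a fair strategy closes, which is what is claimed.

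\emph{Right to left.} Assume every fair tableau for $\Pi$ closes. The one point needing care is that this hypothesis must not be vacuous. If no fair tableau existed, the universal statement would hold trivially and we could conclude nothing. Proposition~\ref{prop:esistenza-equa} supplies at least one fair tableau for $\Pi$. By hypothesis it is closed, so $\vdash \Pi$. Theorem~\ref{thm:correttezza} gives that $\Pi$ is not satisfiable, and \eqref{eq:cardine} turns this into the validity of $\Gamma \segue \Delta$. This also shows that ``every fair tableau closes'' is equivalent to ``some tableau closes''.

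\emph{The particular case.} Take the sequent $\segue \varphi$, with $\Gamma = \varnothing$ and $\Delta = \{\varphi\}$. Its associated block is $\{\lnot\varphi\}$. Its validity means $\varnothing \Vdash_{\Mod} \varphi$, that is, $\varphi$ is a logical truth. So the general statement specializes to: $\varphi$ is a logical truth iff every fair tableau rooted at $\{\lnot\varphi\}$ closes.

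The corollary phrases this with ``the tableau'' in the singular. To cover that reading I would add the remark already noted above: soundness makes closure of any single tableau sufficient, and completeness makes closure of every fair one necessary. Hence it does not matter which fair tableau is meant.

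The only real obstacle is the non-vacuity point in the right-to-left direction, and Proposition~\ref{prop:esistenza-equa} disposes of it.
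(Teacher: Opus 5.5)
Your proof is correct and follows the argument the paper intends. The paper states the corollary without separate proof, as Theorem~\ref{thm:completezza} (left to right) combined with Theorem~\ref{thm:correttezza} (right to left) through \eqref{eq:cardine}; your appeal to Proposition~\ref{prop:esistenza-equa} to keep the right-to-left direction from being vacuous, and your reading of ``the tableau'' as any fair tableau, are the points the paper relies on implicitly (compare the proof of Corollary~\ref{cor:LS}).
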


\subsection{Compactness and the countable model property}

Blocks are finite, so the method does not apply as it stands to an infinite set of sentences. To reach compactness we extend the construction, following Smullyan \cite{Smullyan1968}, by feeding the sentences of a countable set into the branches one at a time. The extension is used only here, and we keep its bookkeeping steps distinct from the rules of the calculus.

Both of the results that follow are proved for first-order tableaux in \cite[Ch.~V, \S4]{Smullyan1968}; we give them here in the form the block calculus takes.

\begin{cor}[Countable compactness]\label{cor:compattezza}
If every finite subset of a countable set $\Sigma$ of sentences is satisfiable, then $\Sigma$ is satisfiable.
\end{cor}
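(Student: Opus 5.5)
The plan is to run the fair tableau construction on an infinite root, with the sentences of $\Sigma$ fed into the branches one at a time, as the paragraph preceding the statement announces. Then we read a Hintikka set off an open branch exactly as in Case~2 of Theorem~\ref{thm:completezza}.

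First we fix an enumeration $\sigma_0, \sigma_1, \ldots$ of $\Sigma$; if $\Sigma$ is finite the claim is trivial. The only delicate bookkeeping is that the constants of $\Sigma$ may exhaust $\Cost$, leaving no fresh witnesses. So we first rename: a bijection of $\Cost$ onto the constants of even index, fixing $a_0$, preserves satisfiability of every subset by Lemma~\ref{lem:coincidenza}, and frees infinitely many constants for the $\delta$ rules.

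We then modify the procedure of Proposition~\ref{prop:esistenza-equa}. The root is $\{\sigma_0\}$. At stage $n+1$, besides discharging the head obligation, each open branch adjoins $\sigma_{n+1}$ to its current block, and the obligations generated by $\sigma_{n+1}$ are appended to the queue. The parameters of $\sigma_{n+1}$ enter $\mathrm{Par}(\mathcal R)$ at that stage, so the pairs $\langle\psi,b\rangle$ for the new constants $b$ are appended as well. Freshness for $\delta$ applications is taken relative to the branch as well as to all of $\Sigma$; this is possible after the renaming, because the witnesses are drawn from the odd-indexed constants. The argument of Proposition~\ref{prop:esistenza-equa} transfers verbatim to give (F1) and (F2).

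Next we argue that the tree has an infinite open branch. Suppose every branch closes. Each branch then closes at a finite stage, so by K\"onig's lemma the tree is finite, of height $N$. Its nodes then use only $\sigma_0,\ldots,\sigma_N$. Moving the adjunction steps up to the root, which the inductive application of Lemma~\ref{lem:wk} justifies, turns it into a closed tableau, in the ordinary calculus, for the finite block $\{\sigma_0,\ldots,\sigma_N\}$. The choice of witnesses fresh for all of $\Sigma$ keeps the $\delta$ side conditions valid after this move. By Theorem~\ref{thm:correttezza} that finite subset is unsatisfiable, which contradicts the hypothesis.

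So the tree is infinite, hence by K\"onig's lemma it has an infinite branch $\mathcal R$, and no block of $\mathcal R$ is closed. We set $H := \bigcup_k \Pi_k$ and $C := \mathrm{Par}(\mathcal R)$. Then $H \supseteq \Sigma$, and clauses (H1)--(H5) hold by persistence and fairness, exactly as in Case~2 of Theorem~\ref{thm:completezza}. Lemma~\ref{lem:hintikka} yields a model of $\Sigma$, whose domain is $C$ and therefore countable.

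The main obstacle is the interaction of the adjunction steps with freshness and with the reduction to a finite closed tableau. I will handle it by the renaming into even-indexed constants, with witnesses taken among the odd-indexed ones. An alternative is to place $\sigma_n$ only on branches that are still open: a closed leaf is then a closed block of the ordinary kind, since closure is monotone under adding formulae. This avoids rearranging the tableau, but it still requires checking that the tree obtained is a legitimate tableau for $\{\sigma_0,\ldots,\sigma_N\}$ weakened by Lemma~\ref{lem:wk}.
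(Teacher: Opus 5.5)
Your proposal is correct and follows the paper's proof. It grafts $\Sigma$-steps onto the fair procedure of Proposition~\ref{prop:esistenza-equa}, draws the $\delta$ witnesses from a reserve foreign to $\Sigma$ (you rightly identify this as the one delicate point), reads an infinite open branch as a Hintikka set containing $\Sigma$, and refutes a closed tree by a finite subset. The differences are only bookkeeping. The paper obtains the reserve by adjoining new constants $d_0, d_1, \ldots$ to the language, not by renaming $\Sigma$ into even indices; note that your renaming needs a translation lemma, not Lemma~\ref{lem:coincidenza} as stated. In the closed case the paper descends the finite tree along satisfiable blocks, changing the model only on reserved constants, rather than adjoining $\{\sigma_0,\ldots,\sigma_N\}$ to every block and invoking Theorem~\ref{thm:correttezza}. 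This spares it the check that steps whose principal formula is some $\sigma_j$ remain genuine rule applications after the enlargement.
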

\begin{proof}
For finite $\Sigma$ the claim is trivial; let $\Sigma = \{\sigma_0, \sigma_1, \ldots\}$. Two stipulations. First, adjoin to the language a countable stock $\{d_0, d_1, \ldots\}$ of constants foreign to $\Sigma$, and require every $\delta$ application below to draw its fresh constant from this reserve; every model of a subset of $\Sigma$ extends to the enlarged language by interpreting the reserved constants arbitrarily, and by Lemma~\ref{lem:coincidenza} the extension alters no truth value of a sentence of $\Sigma$. Second, modify the procedure of Proposition~\ref{prop:esistenza-equa} at one point: the root is $\{\sigma_0\}$, and at the end of stage $n$ the sentence $\sigma_n$ is adjoined to the leaf block of every open branch. We call these insertions \emph{$\Sigma$-steps}; they introduce hypotheses rather than decompose them, and are used in this proof alone. Fairness is understood as in Definition~\ref{def:equa}, relative to all the formulae that appear.

\emph{Case 1: at no stage is every branch closed.} Since a $\Sigma$-step extends every open branch at every stage, no open branch is ever completed, and the tree is infinite; being finitely branching it has, by K\"onig's lemma, an infinite branch $\mathcal R$, none of whose blocks is closed. Let $H$ be the union of the blocks of $\mathcal R$ and $C = \mathrm{Par}(\mathcal R)$. That $H$ is a Hintikka set over $C$ is verified exactly as in Case 2 of Theorem~\ref{thm:completezza}: the $\Sigma$-steps remove no formula, so Lemma~\ref{lem:persistenza} continues to hold, and fairness discharges (H2)--(H5). Moreover $\Sigma \subseteq H$, since for each $n$ the branch is open at stage $n$ and the $\Sigma$-step of that stage adjoined $\sigma_n$ to its leaf. By Lemma~\ref{lem:hintikka}, $M_H \vDash H \supseteq \Sigma$.

\emph{Case 2: at some stage every branch is closed.} We show that this contradicts the hypothesis. A tree all of whose branches close at a finite stage is finite, by the argument of Lemma~\ref{lem:finitezza}; in it only finitely many $\Sigma$-steps have been performed, say those adjoining $\sigma_0, \ldots, \sigma_N$, and $\Sigma_0 := \{\sigma_0,\ldots,\sigma_N\}$ is a finite subset of $\Sigma$. By hypothesis there is $M \vDash \Sigma_0$, regarded as extended to the reserved constants. Descend the closed tree along satisfiable blocks as in Theorem~\ref{thm:correttezza}, updating the model as one goes: at a rule step Lemma~\ref{lem:conservazione} provides a satisfiable child and, in the $\delta$ cases, replaces the current model by one differing only in the interpretation of a reserved constant fresh for the branch; at a $\Sigma$-step from $\Pi'$ to $\Pi' \cup \{\sigma_n\}$ with $n \le N$, the current model $M'$ satisfies $\Pi'$ by construction and satisfies $\sigma_n$ as well, since $M'$ differs from $M$ only on reserved constants, none of which occurs in $\sigma_n$, so that Lemma~\ref{lem:coincidenza} transfers $M \vDash \sigma_n$ to $M'$. The descent ends in a leaf both closed and satisfiable, against Proposition~\ref{prop:chiusura}.
\end{proof}

\begin{cor}[Countable model property; downward L\"owenheim--Skolem]\label{cor:LS}
Every satisfiable block has a model whose domain is countable; and every countable set of sentences that is satisfiable has a model whose domain is countable.
\end{cor}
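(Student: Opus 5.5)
The plan is to read off both claims from the Hintikka models already built in the proofs of Theorem~\ref{thm:completezza} and Corollary~\ref{cor:compattezza}, and to check that their domains are countable. No new construction is needed; the whole work is to confirm that the domain $C$ used there is a countable set of constants.

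For the first claim, let $\Pi$ be a satisfiable block. I will first invoke Proposition~\ref{prop:esistenza-equa} to build a fair tableau $T$ for $\Pi$. By Theorem~\ref{thm:correttezza}, $T$ cannot close, since a closed tableau for $\Pi$ would make $\Pi$ unsatisfiable. So the contrapositive argument in the proof of Theorem~\ref{thm:completezza} applies. It yields a branch $\mathcal R$, either finite and completed open or infinite, and the union $H$ of its blocks is a Hintikka set over $C = \mathrm{Par}(\mathcal R)$. Lemma~\ref{lem:hintikka} then gives $M_H \vDash H \supseteq \Pi$, and the domain of $M_H$ is $D_H = C$. Finally, $C \subseteq \Cost$, and $\Cost$ is countable by the definition of $\Ll$, so $C$ is countable. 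It is also non-empty, as it contains $a_0$.

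For the second claim, let $\Sigma$ be a countable satisfiable set of sentences. If $\Sigma$ is finite, it is itself a block, possibly after discarding duplicates, and the first claim applies. If $\Sigma$ is infinite, every finite subset is satisfiable. I will run the $\Sigma$-step construction from the proof of Corollary~\ref{cor:compattezza}. Case~2 of that proof is contradictory, so Case~1 holds, and it produces a Hintikka set $H \supseteq \Sigma$ over $C = \mathrm{Par}(\mathcal R)$ together with the model $M_H$ on domain $C$. Here $C$ lies inside $\Cost \cup \{d_0, d_1, \ldots\}$, a union of two countable sets, so $C$ is again countable. The model $M_H$ interprets the enlarged language. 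Its reduct to $\Ll$ keeps the same domain and, by Lemma~\ref{lem:coincidenza}, still satisfies every sentence of $\Sigma$, since those sentences contain no reserved constant.

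I do not expect a real obstacle, because countability is read off the syntax: the domain is a set of constant symbols. The one point needing care is the second claim, where the language has been enlarged by the reserve $\{d_i\}$. There I must check that this reserve is countable and that passing to the reduct changes neither the domain nor the truth values of sentences in $\Sigma$, which Lemma~\ref{lem:coincidenza} settles.
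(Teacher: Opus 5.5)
Your proposal is correct and follows the same route as the paper. By soundness a fair tableau for a satisfiable block cannot close, so the completeness argument produces a Hintikka model whose domain is the countable set $\mathrm{Par}(\mathcal R)$; for a countable satisfiable $\Sigma$, Case~2 of the compactness construction is excluded and Case~1 supplies the model. You also treat finite $\Sigma$ separately through the first claim, and you pass to the reduct over the reserved constants $d_i$ by Lemma~\ref{lem:coincidenza}. Both are details the paper leaves implicit (its compactness construction assumes $\Sigma$ infinite), and you handle them correctly.
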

\begin{proof}
Let $\Pi$ be satisfiable. By Theorem~\ref{thm:correttezza} no tableau for $\Pi$ closes, so a fair tableau for $\Pi$ -- which exists by Proposition~\ref{prop:esistenza-equa} -- falls under one of the two cases of Theorem~\ref{thm:completezza}, and the model produced by Lemma~\ref{lem:hintikka} has domain $C = \mathrm{Par}(\mathcal R)$, a set of constants, hence countable. For a countable $\Sigma$, Case 1 of Corollary~\ref{cor:compattezza} produces a model of $\Sigma$ with the same property, and Case 2 is impossible when $\Sigma$ is satisfiable.
\end{proof}

\begin{rem}[Where the non-constructive step lies]\label{oss:wkl}
Both Theorem~\ref{thm:completezza} and Corollary~\ref{cor:compattezza} use K\"onig's lemma for finitely branching trees, and nothing stronger. This is not an artefact of the presentation: completeness for countable languages is provable in $\mathrm{WKL}_0$ and is, over $\mathrm{RCA}_0$, equivalent to it \cite{Simpson2009}. The contrast with Section~\ref{sec:strutturale} is worth recording, since the admissibility of cut proved there is entirely finitary.
\end{rem}

\noindent The $\Sigma$-steps, being insertions of hypotheses, fall outside the analyticity analysis of the next section, which concerns tableaux built by the rules of the calculus alone.

\section{Analyticity and termination}\label{sec:analiticita}

A refutation calculus is \emph{analytic}, in the tradition that reaches from Gentzen to contemporary proof theory, if every proof proceeds by decomposing and never by composing: nothing enters the proof that is not already contained in its conclusion. In the sequent calculus the requirement is formalized in the subformula property, and it admits the characterization that a calculus enjoys that property when cut is admissible and, in every other rule, the formulae of the premisses are subformulae of the formulae of the conclusion. The first half is Theorem~\ref{thm:taglio}. The second holds here in the form appropriate to first-order logic, where the literal notion of subformula must give way to a notion closed under instantiation.

\subsection{Generalized subformulae}

Two notions of subformula are needed, and it is the gap between them that measures the price of the passage to the first order.

\begin{dfn}[Subformula]\label{def:sottoformula}
$\Sub$ is the smallest reflexive and transitive relation on $\Frml$ with $\varphi \in \Sub(\lnot\varphi)$, $\varphi,\psi \in \Sub(\varphi \star \psi)$ for $\star \in \{\land,\lor,\rightarrow\}$, and $\varphi \in \Sub(Qx\,\varphi)$ for $Q \in \{\forall,\exists\}$. We put $\Sub(\Pi) := \bigcup_{\varphi\in\Pi}\Sub(\varphi)$.
\end{dfn}

The rule $\forall$ produces from $\forall x\,\varphi$ the instance $\varphi[x/\adown]$, which is not a subformula of $\forall x\,\varphi$ in this sense, containing as it does a constant where the original has a variable. The notion has to be widened, as Gentzen already saw.

\begin{dfn}[Generalized subformula]\label{def:sottoformula-gen}
$\Subg(\Pi)$ is the smallest set of formulae such that $\Sub(\Pi) \subseteq \Subg(\Pi)$; if $\forall x\,\varphi \in \Subg(\Pi)$ or $\exists x\,\varphi \in \Subg(\Pi)$ then $\varphi[x/t] \in \Subg(\Pi)$ for every closed term $t$ with $\Leg\,\varphi[x/t]$; and $\Subg(\Pi)$ is closed under immediate propositional components. Every $\psi \in \Subg(\Pi)$ is an instance of a subformula of some $\varphi \in \Pi$.
\end{dfn}

\begin{thm}[Generalized subformula property]\label{thm:subformula}
Every block occurring in a tableau for $\Pi$ is contained in $\Subg(\Pi)$.
\end{thm}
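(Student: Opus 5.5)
The plan is an induction along the tree: we show that every block on every branch of a tableau for $\Pi$ lies in $\Subg(\Pi)$, by induction on the depth of the node. The base case is the root, where $\Pi \subseteq \Sub(\Pi) \subseteq \Subg(\Pi)$ by Definition~\ref{def:sottoformula-gen} and the reflexivity of $\Sub$.

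For the inductive step, suppose the parent block $\Pi'$ is contained in $\Subg(\Pi)$ and a rule is applied to it with principal formula $\varphi_0 \in \Pi'$. Each child is $(\Pi' \setminus \{\varphi_0\}) \cup K$, or $\Pi' \cup K$ for the $\gamma$ rules, where $K$ is the set of formulae introduced below the line. The context is inherited, so it suffices to prove $K \subseteq \Subg(\Pi)$ given $\varphi_0 \in \Subg(\Pi)$. This turns the proof into a closure property of $\Subg(\Pi)$, which I check rule by rule.

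\emph{Propositional rules.} For $(\land)$, $(\lor)$ and $(\rightarrow)$ the positive components $\varphi,\psi$ are immediate components, so closure under immediate propositional components applies. The negated components need more care. Take $\lnot\varphi$ from $\lnot(\varphi\land\psi)$, $\lnot(\varphi\lor\psi)$, $\varphi\rightarrow\psi$ or $\lnot(\varphi\rightarrow\psi)$, and $\varphi$ from $\lnot\lnot\varphi$. Here I read ``immediate propositional components'' in the sense of Smullyan's $\alpha/\beta$ classification, that is, the formulae the corresponding rules produce. I also note that the definition's closing sentence ("every $\psi\in\Subg(\Pi)$ is an instance of a subformula") is meant with $\lnot$ of a subformula allowed, i.e.\ the generalized subformulae are taken up to one leading negation. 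This is the main obstacle. Under the literal reading via $\Sub$ alone, $\lnot\varphi$ need not be a subformula of $\lnot(\varphi\land\psi)$. So I will first fix the reading, namely that closure under components means closure under the $\alpha$/$\beta$ components, negated ones included. If the literal reading is insisted upon, the fallback is to prove instead the statement that every formula in a block is, or is the negation of, a member of $\Subg(\Pi)$, and run the same induction with that invariant.

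\emph{Quantifier rules.} For $(\forall)$ and $(\exists)$ the instance $\varphi[x/a]$ is a closed-term instance of $\varphi$ with $Qx\,\varphi \in \Subg(\Pi)$, and the substitution is legitimate because $a$ is closed. So it belongs to $\Subg(\Pi)$ directly by the instantiation clause. For $(\lnot\exists)$ and $(\lnot\forall)$ one first passes from $\lnot Qx\,\varphi$ to its component $Qx\,\varphi$, instantiates, and then adds the negation, $\lnot\varphi[x/a]$, under the same component reading as above. The $\gamma$ rules also retain their principal formula, which is in $\Subg(\Pi)$ by hypothesis.

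Finally, the argument uses no side condition on $\adown$ or $\aup$: the instantiation clause allows every closed term. So the argument covers tableaux of any shape, including infinite ones, since each node is at finite depth. It is also independent of Section~\ref{sec:strutturale}, as Corollary~\ref{cor:taglio-analitico} requires.
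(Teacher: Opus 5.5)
This is correct and is essentially the paper's own proof. Both argue by induction on the depth of the node and check rule by rule that the formulae each rule introduces lie in $\Subg(\Pi)$ whenever the principal formula does. Your fallback invariant (every formula of a block is in $\Subg(\Pi)$ or is the negation of a member of it) is the version that holds up under the literal reading of Definition~\ref{def:sottoformula-gen}. It covers both the negated $\alpha/\beta$ components and the negated instances $\lnot\varphi[x/a]$ produced by $\lnot\exists$ and $\lnot\forall$, which mere closure under components would not license by ``adding the negation''; the paper passes over this point with ``covered by the closure conditions''.
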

\begin{proof}
By induction on the depth of the node. The root satisfies $\Pi \subseteq \Subg(\Pi)$. Let $\Pi' \subseteq \Subg(\Pi)$ and let a rule with principal formula $\varphi_0 \in \Pi'$ be applied. For the rules $\alpha$ and $\beta$ the formulae introduced are immediate subformulae of $\varphi_0$, or negations of such, both covered by the closure conditions, and the context $\Pi' \setminus \{\varphi_0\}$ already lies in $\Subg(\Pi)$. For a $\gamma$ rule the principal formula is retained and the instance introduced is $\varphi[x/\adown]$, respectively $\lnot\varphi[x/\adown]$, which lies in $\Subg(\Pi)$ by closure under instantiation and, in the negated form, under components. For a $\delta$ rule the same clauses apply to $\varphi[x/\aup]$.
\end{proof}

\begin{cor}[Analyticity]\label{cor:analiticita}
The block calculus is analytic: cut is admissible (Theorem~\ref{thm:taglio}) and every refutation uses only generalized subformulae of the root (Theorem~\ref{thm:subformula}). No block of a tableau for $\Pi$ contains a formula extraneous to $\Subg(\Pi)$: in particular no cut formula and no auxiliary lemma occurs in it. In the propositional fragment, where $\gamma$ and $\delta$ are absent, $\Subg(\Pi)$ reduces to $\Sub(\Pi)$ closed under negation of components, and the subformula property holds in the literal sense.
\end{cor}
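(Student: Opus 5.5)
The corollary gathers results already proved, so the proof is mostly a matter of assembling them in the right order. I take its four assertions one at a time.

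First, cut admissibility is exactly Theorem~\ref{thm:taglio}, which Corollary~\ref{cor:taglio-conservativo} upgrades to the conservativity of the $\mathrm{Cut}$ rule.

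Second, the containment of every block in $\Subg(\Pi)$ is Theorem~\ref{thm:subformula}. Every formula occurring in a refutation lies in some block of the tableau, so this gives the claim that only generalized subformulae of the root are used. It also gives the claim that no extraneous formula, and in particular no cut formula or auxiliary lemma, appears in any block of a (cut-free) tableau for $\Pi$. I would stress one point here: the proof of Theorem~\ref{thm:subformula} is an induction on depth over the rules of the calculus alone. It does not use Section~\ref{sec:strutturale}, so there is no circularity with Corollary~\ref{cor:taglio-analitico}.

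Third, the propositional clause needs a direct check of Definition~\ref{def:sottoformula-gen} when no quantifiers occur. In that case the instantiation clause never fires, so $\Subg(\Pi)$ is the closure of $\Sub(\Pi)$ under immediate propositional components. Those components include the negated components $\lnot\varphi$, $\lnot\psi$ produced by the rules $\lnot\land$, $\lnot\lor$, $\lnot\rightarrow$, $\rightarrow$. I would verify rule by rule that each formula introduced is either a subformula of the principal formula or the negation of one. This gives the description ``$\Sub(\Pi)$ closed under negation of components''. Every such formula is still built only from subformulae of $\Pi$ with at most one added $\lnot$, which is the literal subformula property in the form appropriate to unsigned calculi.

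The only step that needs care is this last one, namely making precise that ``immediate propositional components'' in Definition~\ref{def:sottoformula-gen} covers the negated components. I would settle it by listing, for each of the seven propositional rules, the formulae it introduces, and observing that each is $\chi$ or $\lnot\chi$ with $\chi\in\Sub(\varphi_0)$.
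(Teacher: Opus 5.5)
Your proposal is correct and takes essentially the paper's approach: the paper gives no separate proof and reads the corollary off Theorem~\ref{thm:taglio} and Theorem~\ref{thm:subformula}, and your rule-by-rule check of the propositional clause only makes explicit what the proof of Theorem~\ref{thm:subformula} leaves implicit. When you write that check out, run the induction on the invariant that every block lies in $\Sub(\Pi)\cup\lnot[\Sub(\Pi)]$: a principal formula may itself be a negated component $\lnot\chi$ with $\chi\in\Sub(\Pi)$, and its components are still built from proper subformulae of $\chi$.
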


\subsection{Controlled loss and the growth of parameters}

In first-order logic the literal subformula property cannot hold for any sound and complete calculus, the quantifier rules being obliged to introduce instances $\varphi[x/t]$ that contain terms absent from the principal formula. The loss is nonetheless controlled, in the following precise sense.

\begin{prop}[Localization of the loss]\label{prop:localizzazione}
The only formulae appearing in a tableau for $\Pi$ that are not literal subformulae, or negations of literal subformulae, of formulae of $\Pi$ are instances $\varphi[x/t]$ and $\lnot\varphi[x/t]$ of quantified subformulae of formulae of $\Pi$, with $t$ a closed term occurring on the branch.
\end{prop}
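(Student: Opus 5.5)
The plan is to strengthen the statement into an invariant on nodes and prove it by induction on the depth of the node, along the lines of Theorem~\ref{thm:subformula} but with sharper bookkeeping. The invariant is that every formula $\psi$ in a block at depth $n$ on a branch $\mathcal R$ falls under one of two cases:
\begin{enumerate}
\item[(a)] $\psi \in \Sub(\Pi)$, or $\psi = \lnot\chi$ with $\chi \in \Sub(\Pi)$;
\item[(b)] $\psi$ is $\chi[x/t]$ or $\lnot\chi[x/t]$, where $t$ is a constant occurring on $\mathcal R$ (by Convention~\ref{conv:relazionale}) and $\chi$ is a formula with $Qx\,\chi$ a subformula of some formula of $\Pi$, possibly after further instantiations.
\end{enumerate}
Case (b) reads ``instance of a quantified subformula'' with $\varphi[x/t]$ understood as a formula obtained from a subformula by substituting closed terms of the branch for its free variables. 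This is what Definition~\ref{def:sottoformula-gen} already licenses, by its remark that every member of $\Subg(\Pi)$ is an instance of a subformula. I would make this reading explicit first, since iterated quantifiers force simultaneous instances.

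The base case is immediate: $\Pi \subseteq \Sub(\Pi)$. For the step, the context is inherited unchanged. Parameters of the parent branch remain parameters of the child branch by monotonicity, since $\mathrm{Par}$ only grows. So only the formulae introduced by the rule need checking, and I would go through the rule groups in order.

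\emph{$\alpha$ and $\beta$ rules.} The components of $\varphi_0$ are immediate subformulae of $\varphi_0$ or negations of them. If $\varphi_0$ falls under (a), they fall under (a). A small check is needed for $\lnot\lnot\chi$ when only $\lnot\chi$ is a subformula: then $\chi$ is a subformula too, so the component is fine. If $\varphi_0$ falls under (b), $\varphi_0 = \chi[\vec x/\vec t]$ with $\chi$ a subformula. Substitution commutes with the connectives, so the components are $\chi_i[\vec x/\vec t]$ with $\chi_i$ subformulae, possibly negated. They therefore fall under (b), or under (a) when the substitution is vacuous.

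\emph{$\gamma$ and $\delta$ rules.} The new formula is $\chi[\vec x/\vec t][x/a]$ with $a$ either $\adown \in \mathrm{Par}(\mathcal R)$ or a fresh $\aup$, which now occurs in the child block. Either way it is an instance of the subformula $\chi$ on constants of the branch, which is case (b).

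The main obstacle is the parameter $a_0$. A $\gamma$ instance on $a_0$ may introduce a constant that occurs in no block above, so ``occurring on the branch'' must be read as ``belonging to $\mathrm{Par}(\mathcal R)$'', or noted to hold at the child since $a_0$ then occurs there. I would state this reading explicitly. I would also check that the quantifier clause never needs terms outside $\mathrm{Par}(\mathcal R)$, which is exactly where the relational convention is used.
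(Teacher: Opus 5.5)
Your proposal is correct and follows the paper's route. The paper obtains the proposition in one line from Theorem~\ref{thm:subformula}, whose proof is the same induction on the depth of the node, together with the remark in Definition~\ref{def:sottoformula-gen} that every generalized subformula is an instance of a subformula. Your extra invariant on constants is harmless but not needed: once vacuous substitutions are discarded, each substituted constant occurs in the formula itself and hence on the branch, and this also settles the worry about $a_0$.
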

\begin{proof}
Immediate from Theorem~\ref{thm:subformula} and Definition~\ref{def:sottoformula-gen}: the elements of $\Subg(\Pi)\setminus\Sub(\Pi)$ are exactly those produced by the instantiation clause, possibly negated.
\end{proof}

\begin{prop}[Growth of parameters]\label{prop:crescita}
In the relational fragment, let $\mathcal R$ be a branch of a tableau for $\Pi$ and let $\Cost(\Pi)$ be the set of constants occurring in $\Pi$. Then
\[
|\mathrm{Par}(\mathcal R)| \;\le\; |\Cost(\Pi)| \;+\; \#\{\delta \text{ applications on } \mathcal R\} \;+\; 1 ,
\]
the final unit accounting for the reserved parameter $a_0$. Every $\delta$ application introduces at most one new parameter, and the rules $\alpha, \beta, \gamma$ introduce none. In particular, if only finitely many $\delta$ applications are performed on $\mathcal R$, the set of terms occurring on $\mathcal R$ is finite.
\end{prop}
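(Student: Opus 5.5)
The plan is to argue by induction on the stages of $\mathcal R$, tracking the set of constants occurring in the blocks of the branch up to stage $k$ together with the set of constants already used as fresh witnesses. Write $\mathcal R_k$ for the initial segment $\Pi_0,\ldots,\Pi_k$. The invariant to establish is that the constants occurring in some block of $\mathcal R_k$ all lie in $\Cost(\Pi)\cup W_k$. Here $W_k$ is the set of witnesses introduced by the $\delta$ applications performed on $\mathcal R_k$, together with $a_0$. Since $|W_k\setminus\{a_0\}|$ is at most the number of $\delta$ applications so far, and $\mathrm{Par}$ adds only $a_0$ to the occurring constants, the bound follows by taking the union over $k$, the parameters being monotone by Definition~\ref{def:parametri}.

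The base case holds by definition, since $\Pi_0=\Pi$. For the step, I will inspect the rule applied at $\Pi_k$.

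\begin{itemize}
\item For $\alpha$ and $\beta$ rules, the components of the principal formula contain no constant that the principal formula lacks. Substitution and the propositional decomposition introduce no new symbol, so the invariant is preserved.
\item For a $\gamma$ rule, the instance $\varphi[x/\adown]$ has constants among those of $\varphi$ together with $\adown$. By the side condition, $\adown\in\mathrm{Par}(\mathcal R)$. More carefully, $\adown$ must be a parameter of the branch at that point, i.e.\ either $a_0$ or a constant already occurring on $\mathcal R_k$. In either case it lies in $\Cost(\Pi)\cup W_k$.
\item For a $\delta$ rule, the instance $\varphi[x/\aup]$ adds at most the single constant $\aup$, which is placed in $W_{k+1}$. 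If $x$ is not free in $\varphi$, no constant is added at all. This is the case flagged after Definition~\ref{def:nuova}, and it only makes the inequality slacker.
\end{itemize}

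Hence each $\delta$ step enlarges the set of occurring constants by at most one, and the other steps enlarge it by none. This gives both the displayed inequality and the sentence ``every $\delta$ application introduces at most one new parameter''.

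The main subtlety is the $\gamma$ case. The side condition is phrased via $\mathrm{Par}(\mathcal R)$ for the whole branch, which for an infinite branch might seem to allow instantiation on a constant appearing only later. I will read the condition, as the construction in Proposition~\ref{prop:esistenza-equa} and the proof of Lemma~\ref{lem:chiusura-gen} do, as requiring $\adown$ to be $a_0$ or to occur in a block already on the branch. Under that reading the instantiating constant never creates a new parameter. Even under the looser reading, any such constant still lies in $\mathrm{Par}(\mathcal R)$, which is then contained in $\Cost(\Pi)\cup W\cup\{a_0\}$ by the same induction applied to the occurrences.

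Finally, for the last claim, under Convention~\ref{conv:relazionale} the closed terms are exactly the constants. If $\mathcal R$ has finitely many $\delta$ applications, the right-hand side is finite because $\Pi$ is a finite block. Therefore the set of terms occurring on $\mathcal R$ is finite.
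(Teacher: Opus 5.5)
Your proof is correct and is the paper's own argument: the $\alpha$ and $\beta$ rules introduce no constant, a $\gamma$ rule instantiates on ``a parameter already available'', and a $\delta$ rule adds at most its witness. You make it explicit as an induction along the branch, reading the $\gamma$ side condition relative to the branch up to the current node, which is the reading the paper intends.

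Do drop the fallback sentence about the looser reading, though. If $\mathrm{Par}(\mathcal R)$ were taken over the whole branch, the condition $\adown \in \mathrm{Par}(\mathcal R)$ would be satisfied by the instance $\varphi[x/\adown]$ itself. Your induction would then be circular, and the bound actually fails under that reading: one $\forall$ step on $\{\forall x\,P(x)\}$ with an arbitrary $b \neq a_0$ gives $|\mathrm{Par}(\mathcal R)| = 2$ with no $\delta$ application, against a bound of $1$.
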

\begin{proof}
The rules $\alpha$ and $\beta$ act on truth-functional connectives and introduce no constant; the $\gamma$ rule instantiates on a parameter already available. The only rule that introduces a constant is $\delta$, and it introduces one per application -- or none, in the degenerate case in which the quantified variable does not occur free in the matrix, discussed after Definition~\ref{def:nuova}. The count follows.
\end{proof}

\begin{rem}[The functional case]\label{oss:funzionale}
With function symbols (Remark~\ref{oss:herbrand}) the linear control fails: a single constant $\aup$ generates through the functors the Herbrand universe $\{\aup, f(\aup), f(f(\aup)), \ldots\}$. Theorem~\ref{thm:subformula} survives, all these instances being generalized subformulae of the root, so analyticity is preserved and it is only the finiteness of the set of terms that is lost.
\end{rem}

\subsection{A terminating fragment}

Proposition~\ref{prop:crescita} bounds the parameters in terms of the $\delta$ applications, and says nothing about how many of those there are. On a syntactically delimited class of blocks the number is bounded outright, and the construction terminates. We first isolate one point of hygiene. Call a construction \emph{non-repeating} if on each branch no pair $\langle \psi, a \rangle$, with $\psi$ of type $\gamma$ and $a$ a parameter, is discharged twice. The procedure of Proposition~\ref{prop:esistenza-equa} is non-repeating, each such pair being appended to a queue exactly once. The proviso is not idle. A $\gamma$ instance may be introduced, decomposed by a later rule and thereby removed from the block, at which point the $\gamma$ application that produced it becomes productive again; a construction free to seize that opportunity would loop, and would do so on blocks as simple as $\{\forall x\,(P(x) \land Q(x))\}$.

\begin{dfn}[$\delta$-stratified block]\label{def:stratificato}
A block $\Pi$ is \emph{$\delta$-stratified} if no formula of type $\gamma$ belonging to $\Subg(\Pi)$ has a formula of type $\delta$ among its subformulae.
\end{dfn}

The condition is met whenever every $\gamma$ formula of $\Subg(\Pi)$ has a quantifier-free matrix, and in particular in the propositional fragment and in the block translation of the Bernays--Sch\"onfinkel prefix class.

\begin{thm}[Termination and decidability]\label{thm:terminazione}
Let $\Pi$ be $\delta$-stratified. Then every non-repeating fair construction for $\Pi$ is a finite tableau. Consequently the validity of a sequent whose associated block is $\delta$-stratified is decidable, and the procedure of Proposition~\ref{prop:esistenza-equa} decides it.
\end{thm}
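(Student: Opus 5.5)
The plan is to show that every branch of a non-repeating fair construction for a $\delta$-stratified $\Pi$ is finite. K\"onig's lemma \cite{Konig1927} then gives that the tree, being finitely branching, is finite. I would proceed in three counting steps, one for each of the three resources a branch can consume: $\delta$ applications, parameters, and $\gamma$ applications.

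First, I would bound the number of $\delta$ applications on a branch $\mathcal R$. Every formula on $\mathcal R$ lies in $\Subg(\Pi)$ by Theorem~\ref{thm:subformula}, and it can be traced back to an occurrence of a (signed, i.e.\ possibly negated) subformula of a formula of $\Pi$, through the decomposition that produced it. The rules $\alpha$, $\beta$ and $\delta$ consume their principal formula and create components. Only a $\gamma$ rule lets one occurrence give rise to several formulae on the branch, namely its instances $\varphi[x/\adown]$ for different $\adown$. By $\delta$-stratification, a $\gamma$ formula of $\Subg(\Pi)$ has no $\delta$ subformula. Substitution of constants does not change the principal sign, so no instance of its matrix, and no component of such an instance, is of type $\delta$. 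Hence every $\delta$ formula on $\mathcal R$ descends from an occurrence in $\Pi$ that does not lie under a $\gamma$ occurrence. Each such occurrence yields at most one formula on the branch, since nothing duplicates it, and that formula is principal at most once, since it is consumed. So the number of $\delta$ applications on $\mathcal R$ is at most the number $N$ of $\delta$-type subformula occurrences of $\Pi$ lying outside every $\gamma$ subformula. Making this ``origin'' bookkeeping precise, including the passage through negated components such as $\lnot\varphi$ from $\varphi\rightarrow\psi$, is the step I expect to be the main obstacle. I would handle it by induction along the branch, attaching to each formula its originating occurrence.

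Second, Proposition~\ref{prop:crescita} gives $|\mathrm{Par}(\mathcal R)| \le |\Cost(\Pi)| + N + 1$. Every formula on $\mathcal R$ is therefore an instance, over this finite set of constants, of a subformula (or negated subformula) of $\Pi$, so the set $F$ of formulae that can occur on $\mathcal R$ is finite. In particular there are finitely many $\gamma$ formulae on $\mathcal R$ and finitely many pairs $\langle\psi,a\rangle$. Since the construction is non-repeating, each pair is discharged at most once, so the number of $\gamma$ applications on $\mathcal R$ is bounded.

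Third, I would bound the $\alpha$ and $\beta$ applications using Lemma~\ref{lem:peso}. Each $\alpha$, $\beta$ or $\delta$ step strictly lowers the total weight of the block. A $\gamma$ step raises it by at most $\max_{\varphi\in F}\wt{\varphi}+1$. The total weight therefore increases only finitely many times, each time by a bounded amount, and otherwise strictly decreases. Starting from the finite total weight of $\Pi$, this allows only finitely many non-$\gamma$ steps. So $\mathcal R$ is finite.

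Finally, for decidability, I would run the procedure of Proposition~\ref{prop:esistenza-equa} on the block associated with $\Gamma\segue\Delta$. It is fair and non-repeating, hence by the above it halts with a finite tableau. If the tableau is closed, the sequent is valid by Theorem~\ref{thm:correttezza}. If it is not closed, the sequent is invalid: Theorem~\ref{thm:completezza} says that a fair tableau of an unsatisfiable block always closes, so the block is satisfiable, and then \eqref{eq:cardine} gives invalidity. Whether the finished tableau is closed can be checked effectively.
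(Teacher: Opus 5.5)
Your proof has the paper's skeleton. You bound the $\delta$ applications on a branch because a $\delta$ formula descends from $\Pi$ along a chain that never passes through a $\gamma$ instance. Your count by $\delta$-type occurrences outside every $\gamma$ subformula is a sharper form of the paper's bound by the $\sum_{\varphi\in\Pi}2^{\wt{\varphi}}$ nodes of a descendant forest. Then Proposition~\ref{prop:crescita} bounds the parameters, the possible formulae form a finite set $F$, non-repetition bounds the $\gamma$ steps, and the decision argument is the same as the paper's.

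You diverge only in counting the $\alpha$, $\beta$, $\delta$ steps. The paper builds a second forest, rooted at $\Pi$ and at the $\gamma$ instances, and matches these applications with its internal nodes. You instead take the total weight of the block as a potential. It falls by at least one at each such step, by the second clause of Lemma~\ref{lem:peso}, which holds for set-blocks because the principal formula is removed. It rises by at most $\max_{\varphi\in F}\wt{\varphi}$ at each of the boundedly many $\gamma$ steps. Your version needs no occurrence-tracking at this stage and is untroubled by the merging of equal formulae in a set-block; under merging, the paper's ``bijection'' between applications and internal nodes is strictly only an injection. It also yields a height bound uniform over branches, so K\"onig's lemma is not even needed. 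The paper's version has the merit that one descendant picture serves both counts.

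One step you share with the paper should be made explicit. The inference from $\delta$-stratification to ``no component of such an instance is of type $\delta$'' needs the subformulae in Definition~\ref{def:stratificato} to be read with polarity, as your signed bookkeeping already suggests. With the unsigned $\Sub$ of Definition~\ref{def:sottoformula}, the block $\{\forall x\,(\forall y\,P(x,y)\rightarrow Q(x))\}$ counts as $\delta$-stratified. Yet on each instance the rule $(\rightarrow)$ yields $\lnot\forall y\,P(a,y)$, whose fresh witness must in turn be instantiated, so the branch that always takes the left child is infinite. The signed reading is also the one that makes $\lnot\exists x\,\varphi$ with quantifier-free $\varphi$ count as stratified, as the paper's remark after the definition intends. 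Under that reading your argument goes through as written.
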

\begin{proof}
Fix a branch $\mathcal R$. We show first that only finitely many $\delta$ applications occur on it. Call a formula occurrence on $\mathcal R$ a \emph{$\gamma$-descendant} if the chain of rule applications that produced it passes through a $\gamma$ instance. By $\delta$-stratification no $\gamma$-descendant is of type $\delta$: an instance of a $\gamma$ formula $\psi$ has among its subformulae only instances of subformulae of $\psi$, and $\psi$ has no $\delta$ subformula. Hence every $\delta$ formula occurring on $\mathcal R$ is produced from a formula of $\Pi$ by a chain of applications of type $\alpha$, $\beta$ or $\delta$ alone. Each such application consumes one formula occurrence and produces at most two, of strictly smaller weight by Lemma~\ref{lem:peso}; along a single branch the resulting forest of descendants of $\Pi$ has therefore depth at most $\max_{\varphi\in\Pi}\wt{\varphi}$ and at most $\sum_{\varphi \in \Pi} 2^{\wt{\varphi}}$ nodes, which bounds the number of $\delta$ applications on $\mathcal R$.

By Proposition~\ref{prop:crescita} the set $K := \mathrm{Par}(\mathcal R)$ is then finite. Every formula occurring on $\mathcal R$ lies in $\Subg(\Pi)$ by Theorem~\ref{thm:subformula}, and all its constants lie in $K$; since $\Sub(\Pi)$ is finite and each of its elements has finitely many free variables, the set $F$ of formulae that can occur on $\mathcal R$ is finite.

We now count the applications performed on $\mathcal R$. Those of type $\gamma$ are at most $|F| \cdot |K|$, since the construction is non-repeating and each is determined by a pair $\langle \psi, a\rangle$ with $\psi \in F$ and $a \in K$. Those of the remaining types consume a formula occurrence and produce at most two, each of strictly smaller degree. Consider therefore the forest whose roots are the formulae of $\Pi$ together with the instances introduced by the $\gamma$ applications -- finitely many, by what precedes -- and in which the children of an occurrence are the formulae that an application of type $\alpha$, $\beta$ or $\delta$ produces from it. Every node of this forest has at most two children, and the weight strictly decreases from a node to its children by Lemma~\ref{lem:peso}, so each tree of the forest has depth at most $\max_{\varphi \in F}\wt{\varphi}$ and is finite. The applications of type $\alpha$, $\beta$ and $\delta$ on $\mathcal R$ are in bijection with the internal nodes of the forest, hence finite in number as well. The branch $\mathcal R$ is therefore finite. Since the tree is finitely branching and all its branches are finite, it is finite by K\"onig's lemma.

For decidability: given $\Pi$ $\delta$-stratified, run the procedure of Proposition~\ref{prop:esistenza-equa}, which is fair and non-repeating. It halts, by what precedes, on a finite tableau, and that tableau is closed if and only if $\Pi$ is unsatisfiable, by Theorems~\ref{thm:correttezza} and \ref{thm:completezza}.
\end{proof}

\begin{cor}[Propositional bound]\label{cor:prop-bound}
Put $W := \sum_{\varphi\in\Pi}\wt{\varphi}$. In the propositional fragment every tableau for $\Pi$ has height at most $W$ and at most $2^{W}$ leaves; in particular $W \le 2\sum_{\varphi\in\Pi}\dg{\varphi}$.
\end{cor}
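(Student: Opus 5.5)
The plan is to obtain all three claims from Lemma~\ref{lem:peso}, using the total weight of a block as a potential that strictly decreases along every branch. In the propositional fragment only the rules of type $\alpha$ and $\beta$ occur. For a block $\Pi'$ put $W(\Pi') := \sum_{\varphi\in\Pi'}\wt{\varphi}$. By the second claim of Lemma~\ref{lem:peso}, every application of an $\alpha$ or $\beta$ rule yields children whose total weight is at most $W(\Pi')-1$. The computation in that proof gives the exact decrements: $2$ for $\lnot\lnot$, $\land$ and $\lnot\rightarrow$; $1$ for $\lnot\lor$; and at least $1$ for each $\beta$ child, since the principal formula is replaced by a single formula of strictly smaller weight. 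Along any branch $\Pi_0,\Pi_1,\ldots$ the sequence $W(\Pi_k)$ therefore strictly decreases in the nonnegative integers, starting from $W(\Pi_0)=W$. Every branch thus has at most $W$ rule applications, and the tableau has height at most $W$.

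For the count of leaves, the tree is binary, since each $\beta$ rule has two children and each $\alpha$ rule has one, and its height is at most $W$. A binary tree of height at most $W$ has at most $2^W$ leaves, by an immediate induction on height. One could obtain the sharper bound $2^{\lfloor W/2\rfloor}$ or so by tracking how much each $\beta$ step costs, but the statement only asks for $2^W$, so I will not pursue that.

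For the inequality $W \le 2\sum\dg{\varphi}$, I compare the measures formula by formula. By Definition~\ref{def:grado}, $\dg{\varphi}=k(\varphi)+l(\varphi)$ and $\wt{\varphi}=2k(\varphi)+l(\varphi)$. Hence $\wt{\varphi}\le 2k(\varphi)+2l(\varphi)=2\dg{\varphi}$, and summing over $\Pi$ gives the claim.

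There is one step where care is needed. Because blocks are sets, a child can coincide with a set in which a produced formula was already present, for instance when $\varphi\in\Pi$ and $\land$ adds $\varphi$ again. The set-sum $W$ then drops by more than the computed amount, never less. I will say explicitly that the inequality in Lemma~\ref{lem:peso} survives this collapse, since adjoining already-present formulae adds nothing to a set-sum. With that remark, the strict decrease used in the first paragraph holds for every step, and the rest is routine.
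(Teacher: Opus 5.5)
Your proof is correct and is essentially the paper's: the total weight is a non-negative potential that drops by at least one at every $\alpha$ or $\beta$ step by Lemma~\ref{lem:peso}, binary branching gives the $2^{W}$ bound on leaves, and $\wt{\varphi}=2k(\varphi)+l(\varphi)\le 2\dg{\varphi}$. Your remark that merging duplicate formulae in a set can only enlarge the drop is right, and it makes explicit a point the paper leaves tacit.
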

\begin{proof}
By Lemma~\ref{lem:peso} the total weight of a block decreases by at least one at every propositional step, and it is non-negative; this bounds the height. Each step produces at most two children, which bounds the number of leaves. The final inequality holds because $\wt{\varphi} \le 2\dg{\varphi}$ for every $\varphi$.
\end{proof}

\subsection{Comparison with Smullyan's tableaux}

In Smullyan's presentation each node carries a single formula, the hierarchy $\alpha,\beta,\gamma,\delta$ is the same, and a $\gamma$ formula remains available for successive instantiations by returning to the node that carries it. The two presentations share cut-freeness and the generalized subformula property, the same controlled loss at the quantifier level, and the same closure criterion. They differ in the organization of the information: the block calculus transports the entire block rather than a single formula, and it realizes the repeatability of the $\gamma$ rules by the persistence of the principal formula rather than by returning to the nodes. No loss of analyticity is introduced by the block form: the two hierarchies of generalized subformulae coincide, and the replacement of the literal property by the generalized one is common to both and, as we have seen, unavoidable.

\section{The sequent calculus $\mathrm{LK}$ and the correspondence}\label{sec:LK}

The block calculus is not a merely notational hybrid: it stands in structural correspondence with Gentzen's $\mathrm{LK}$. We present the rules of $\mathrm{LK}$ for $\Ll$, in a formulation modelled on Galvan \cite{Galvan2015}, and then prove the correspondence in both directions. Since cut is admissible on the side of blocks, the direction from $\mathrm{LK}$ to tableaux no longer needs to assume its premiss cut-free, and Gentzen's Hauptsatz follows by transfer.

\subsection{The system}

Sequents are pairs of finite sets of sentences; $\alpha$ is a theorem of classical first-order logic exactly when $\segue \alpha$ is derivable. Contraction is built into the notation, so the explicit structural rules are weakening and cut.

\medskip
\noindent \textbf{Axiom.} \quad $\alpha \segue \alpha \; (\mathrm{Ax})$

\medskip
\noindent \textbf{Structural rules.}
\[
\frac{\Gamma \segue \Delta}{\alpha, \Gamma \segue \Delta} \; \text{Ws}
\qquad
\frac{\Gamma \segue \Delta}{\Gamma \segue \Delta, \alpha} \; \text{Wd}
\qquad
\frac{\Gamma \segue \Delta, \mu \quad \mu, \Theta \segue \Sigma}{\Gamma, \Theta \segue \Delta, \Sigma} \; \text{Cut}
\]

\medskip
\noindent \textbf{Propositional operational rules.}
\[
\frac{\Gamma \segue \Delta, \alpha}{\lnot \alpha, \Gamma \segue \Delta} \; \text{$\lnot$s}
\qquad
\frac{\alpha, \Gamma \segue \Delta}{\Gamma \segue \Delta, \lnot \alpha} \; \text{$\lnot$d}
\]
\[
\frac{\alpha, \beta, \Gamma \segue \Delta}{\alpha \land \beta, \Gamma \segue \Delta} \; \text{$\land$s}
\qquad
\frac{\Gamma \segue \Delta, \alpha \quad \Gamma \segue \Delta, \beta}{\Gamma \segue \Delta, \alpha \land \beta} \; \text{$\land$d}
\]
\[
\frac{\alpha, \Gamma \segue \Delta \quad \beta, \Gamma \segue \Delta}{\alpha \lor \beta, \Gamma \segue \Delta} \; \text{$\lor$s}
\qquad
\frac{\Gamma \segue \Delta, \alpha, \beta}{\Gamma \segue \Delta, \alpha \lor \beta} \; \text{$\lor$d}
\]
\[
\frac{\Gamma \segue \Delta, \alpha \quad \beta, \Gamma \segue \Delta}{\alpha \rightarrow \beta, \Gamma \segue \Delta} \; \text{$\rightarrow$s}
\qquad
\frac{\alpha, \Gamma \segue \Delta, \beta}{\Gamma \segue \Delta, \alpha \rightarrow \beta} \; \text{$\rightarrow$d}
\]

\medskip
\noindent \textbf{Quantifier rules.} We work with closed sentences in the relational fragment, so instantiation is on constants: $\alpha(a) \equiv \alpha(x)[x/a]$.
\[
\frac{\alpha(a), \Gamma \segue \Delta}{\forall x\,\alpha(x), \Gamma \segue \Delta} \; \text{$\forall$s}
\qquad
\frac{\Gamma \segue \Delta, \alpha(a)}{\Gamma \segue \Delta, \forall x\,\alpha(x)} \; \text{$\forall$d}^{\ast}
\]
\[
\frac{\alpha(a), \Gamma \segue \Delta}{\exists x\,\alpha(x), \Gamma \segue \Delta} \; \text{$\exists$s}^{\ast}
\qquad
\frac{\Gamma \segue \Delta, \alpha(a)}{\Gamma \segue \Delta, \exists x\,\alpha(x)} \; \text{$\exists$d}
\]

The rules marked with an asterisk are \emph{critical}: their \emph{proper parameter} $a$ must not occur in the conclusion. In Gentzen's original formulation the same role is played by a free variable, the Eigenvariable; the parameter-based formulation adopted here goes back to Smullyan \cite{Smullyan1968} and matches the closed-sentence setting of blocks. The critical condition is, on the sequent side, the exact counterpart of the freshness condition on $\aup$: both express the introduction of a witness that is fixed but generic. In $\forall$s and $\exists$d the instantiating constant is arbitrary, and these correspond to the $\gamma$ rules. Cut-free derivability is written $\vdash_{\mathrm{cf}}$, derivability with Cut $\vdash_{\mathrm{LK}}$.

\subsection{Three lemmas on $\mathrm{LK}$}

The proof of the correspondence manipulates $\mathrm{LK}$ derivations, and it needs the standard liberty of renaming proper parameters, together with the invertibility of one rule. These facts are folklore; since it is exactly at these junctures that informal treatments of the correspondence pass over the details in silence, we prove them.

\begin{lem}[Renaming of constants]\label{lem:rinomina}
Let $\mathcal D$ be a cut-free derivation of $\Gamma \segue \Delta$ and let $a, c$ be constants neither of which is the proper parameter of a critical application in $\mathcal D$. Then $\mathcal D[a/c]$ is a cut-free derivation of $(\Gamma \segue \Delta)[a/c]$, of the same height.
\end{lem}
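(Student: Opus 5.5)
The plan is an induction on the height of $\mathcal D$, replacing $a$ by $c$ uniformly in every sequent and checking that each inference of $\mathcal D$ is carried to an inference of the same rule. Substituting a constant for a constant commutes with every formula-forming operation: $(\alpha\star\beta)[a/c] = \alpha[a/c]\star\beta[a/c]$, $(\lnot\alpha)[a/c]=\lnot\alpha[a/c]$, $(Qx\,\alpha)[a/c] = Qx\,(\alpha[a/c])$. For instances, $\alpha(b)[a/c] = \alpha[a/c](b[a/c])$, because $x$ and the constants are distinct symbols and substitution on closed terms is always legitimate. Sequents are pairs of finite sets, and the image of a set under $[a/c]$ is again a set, possibly smaller if two formulae collapse. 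Since contraction is built into the notation, such a collapse costs nothing: the translated premisses and conclusion still have the shape the rule demands, with the context $\Gamma[a/c]$, $\Delta[a/c]$.

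\emph{Base case and structural rules.} An axiom $\alpha\segue\alpha$ goes to the axiom $\alpha[a/c]\segue\alpha[a/c]$. Ws and Wd go to Ws and Wd with the weakened formula $\alpha[a/c]$. No Cut occurs, since $\mathcal D$ is cut-free.

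\emph{Non-critical rules.} For the propositional rules the commutation just noted suffices. For $\forall$s and $\exists$d with instantiating constant $b$, the translated inference is the same rule with instantiating constant $b[a/c]$, which may be $c$. This is harmless, because these rules carry no side condition on their constant.

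\emph{Critical rules.} This is the step where the hypothesis is used and where care is needed. Take $\forall$d with proper parameter $b$, premiss $\Gamma\segue\Delta,\alpha(b)$, conclusion $\Gamma\segue\Delta,\forall x\,\alpha(x)$, and $b$ not occurring in the conclusion. By hypothesis $b\neq a$ and $b\neq c$. Hence $b[a/c]=b$, and the translated premiss is $\Gamma[a/c]\segue\Delta[a/c],\alpha[a/c](b)$. The proper parameter is the same $b$.

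It then remains to check the eigenvariable condition: $b$ must not occur in $(\Gamma\segue\Delta,\forall x\,\alpha)[a/c]$. The substitution removes only occurrences of $a$ and adds only occurrences of $c$. Since $b\neq c$, no occurrence of $b$ is created, and $b$ did not occur before, so the condition holds. The case $\exists$s is identical.

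Every inference is carried to an inference of the same rule, and nothing is added or removed, so the height is unchanged. I expect the main obstacle to be the critical case. It is the only point at which both halves of the hypothesis are needed: $b\neq a$ keeps the premiss an instance on the same parameter, and $b\neq c$ prevents the substitution from planting $b$ into the conclusion.
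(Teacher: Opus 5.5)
Your proof is correct and follows the paper's argument step by step. It argues by induction on the height, uses the fact that the constant-for-constant replacement commutes with forming instances (so $\forall$s and $\exists$d carry over with instantiating constant $b[a/c]$), and in the critical case uses the proper parameter's distinctness from $a$ to keep it fixed and its distinctness from $c$ to preserve the side condition. Your explicit treatment of Ws, Wd and of possible collapse of set elements is a harmless addition that the paper covers under ``non-critical rules''.
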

\begin{proof}
By induction on the height of $\mathcal D$. An axiom $\theta \segue \theta$ becomes $\theta[a/c] \segue \theta[a/c]$, again an axiom. For a non-critical rule the uniform replacement of a constant by a constant commutes with the formation of instances -- in particular $\alpha(x)[x/b][a/c] = \alpha(x)[a/c][x/b[a/c]]$, so an application of $\forall$s or $\exists$d with instantiating constant $b$ becomes one with instantiating constant $b[a/c]$ -- and no side condition is at stake. For a critical application with proper parameter $p$: by hypothesis $p \notin \{a,c\}$, so $p$ is untouched and remains the proper parameter; its side condition survives, since $p$ was absent from the old conclusion and the replacement, trading occurrences of $a$ for occurrences of $c \neq p$, cannot introduce $p$ into it.
\end{proof}

\begin{lem}[Pure-parameter form]\label{lem:puri}
Every cut-free derivation of $\Gamma \segue \Delta$ can be transformed into a cut-free derivation of the same endsequent and the same height in which distinct critical applications have distinct proper parameters, each proper parameter occurs only in the subderivation ending with the premiss of its application, and all proper parameters avoid a finite set $X$ of constants fixed in advance.
\end{lem}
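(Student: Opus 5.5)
The plan is an induction on the height of $\mathcal D$, rebuilding the derivation from the leaves downwards. Each critical application is renamed to a fresh constant as soon as it is reached, and Lemma~\ref{lem:rinomina} carries out the renaming. Together with the finite set $X$, the induction will carry a finite set $Y$ of constants that must not be used as proper parameters. The strengthened statement is: for every finite $Y$, the derivation can be transformed into one of the same endsequent and height in which proper parameters are pairwise distinct, each occurs only above its own application, and none lies in $Y$. We apply it with $Y := X \cup \{\text{constants of } \Gamma \segue \Delta\}$.

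Axioms contain no critical applications, so they are already in the required form. For a one-premiss non-critical rule we apply the induction hypothesis to the premiss with the same $Y$, enlarged by the constants of the conclusion, and re-apply the rule. The enlargement ensures that no proper parameter of the transformed subderivation coincides with a constant of the conclusion, and hence none leaks below its application.

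For a two-premiss rule we treat the left premiss first, with $Y$ enlarged by the constants of the conclusion. Call $Z$ the finite set of proper parameters it produces. We then treat the right premiss with $Y \cup Z$ and the conclusion's constants. Proper parameters of the two sides are therefore distinct, and none of them occurs in the other subderivation. It occurs in neither the conclusion nor the other branch, since it avoids every constant fixed before that branch was built. The sequents of a subderivation contain only constants from its own endsequent or proper parameters introduced within it. This holds because cut-free rules introduce no constants downwards except instantiating constants, and those remain in the conclusion.

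The critical case, say $\forall$d with premiss $\Gamma \segue \Delta, \alpha(a)$ and $a$ not in the conclusion, is the main step. First apply the induction hypothesis to the premiss with $Y' := Y \cup \{\text{constants of the conclusion}\}$. Then pick a constant $p$ outside $Y'$, outside the constants occurring in the transformed subderivation $\mathcal D'$, and distinct from every proper parameter of $\mathcal D'$. Such a $p$ exists because $\Cost$ is countable and $\mathcal D'$ is finite.

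The delicate point is whether Lemma~\ref{lem:rinomina} applies to replace $a$ by $p$, which requires that $a$ not be a proper parameter inside $\mathcal D'$. To secure this, put $a$ into $Y'$ as well. It may seem that the premiss's endsequent mentions $a$, but this is harmless: the hypothesis only forbids using $a$ as a proper parameter, and the strengthened induction hypothesis guarantees exactly that. Then $\mathcal D'[a/p]$ derives $\Gamma \segue \Delta, \alpha(p)$, because $a$ is absent from $\Gamma, \Delta, \forall x\,\alpha$. The rule re-applies with proper parameter $p$, since $p$ does not occur in the conclusion.

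Heights are preserved throughout, since Lemma~\ref{lem:rinomina} preserves height and no step adds inferences. The main obstacle is the bookkeeping just described: the set of forbidden constants must be threaded so that both hypotheses of Lemma~\ref{lem:rinomina} hold ($a$ and $p$ are not proper parameters of $\mathcal D'$), and so that the localization claim holds, each parameter confined to the subderivation above its application.
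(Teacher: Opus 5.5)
The gap is in the localization clause at two-premiss rules; the rest of your argument is sound. Your route is the paper's: induction on the height, with the critical case handled by transforming the premiss subderivation while forbidding the old proper parameter, then renaming it by Lemma~\ref{lem:rinomina}. Your critical and one-premiss cases are correct. Threading the constants of the conclusions even takes care of occurrences below an application, which the paper's sketch leaves implicit.

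The localization argument at a two-premiss rule rests on your claim that the sequents of a subderivation contain only constants of its endsequent or its own proper parameters. That claim is false. The instantiating constant of $\forall$s or $\exists$d need not survive into the conclusion, since $\alpha(b)$ becomes $\forall x\,\alpha(x)$. Here is a failure of your construction. Take as left premiss of an $\land$d the sequent $\forall x\,R(x), \Gamma \segue \Delta, A$, obtained by $\forall$s from $R(b), \Gamma \segue \Delta, A$ (itself obtained by Ws), with $b$ occurring nowhere else in the derivation. Let the right premiss end with a $\forall$d. The set $Y \cup Z \cup \{\text{constants of the conclusion}\}$ need not contain $b$, so your prescription allows $b$ as the new proper parameter on the right. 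That parameter then occurs in the left subderivation, outside the subderivation above its own application. Symmetrically, a parameter chosen on the left may coincide with a constant that occurs on the right only above a $\forall$s or $\exists$d, because the left side is treated before, and without reference to, the right one.

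The repair is to forbid more. At each recursive call, put into the forbidden set all constants occurring anywhere in the current derivation (equivalently, all constants of $\mathcal D$ from the outset), together with the parameters already chosen. Add to the induction hypothesis the invariant that the transformed derivation contains no constant other than those of the original and its own new proper parameters; each renaming $[a/p]$ preserves this invariant. A new parameter is then distinct from every original constant and from every other new parameter. It enters the derivation only through the renaming inside the premiss subderivation of its own application. Localization follows at once, both below the application and in sibling branches. The paper's proof needs the same strengthening, since it only makes the parameters chosen in the two subderivations avoid each other.
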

\begin{proof}
By induction on the height of $\mathcal D$. If the last rule is not critical, transform the immediate subderivations by the induction hypothesis, enlarging $X$ progressively so that the parameters chosen in one subderivation avoid those chosen in the other. If the last rule is critical -- say $\forall$d with premiss $\Gamma_0 \segue \Delta_0, \alpha(p)$ and conclusion $\Gamma_0 \segue \Delta_0, \forall x\,\alpha(x)$, with $p$ absent from the conclusion -- transform the premiss subderivation $\mathcal D'$ by the induction hypothesis, with proper parameters avoiding $X \cup \{p\}$, and choose $q$ occurring nowhere in $\mathcal D'$ and outside $X$, which exists because $\Cost$ is countable while $\mathcal D'$ and $X$ are finite. In the transformed $\mathcal D'$ neither $p$ nor $q$ is a proper parameter, so Lemma~\ref{lem:rinomina} applies and $\mathcal D'[p/q]$ derives $\Gamma_0 \segue \Delta_0, \alpha(q)$, the contexts and $\alpha(x)$ being unaffected because $p$ occurs in none of them. An application of $\forall$d with proper parameter $q$ returns the original conclusion. The case $\exists$s is symmetric, and renaming alters no height.
\end{proof}

\begin{lem}[Invertibility of $\lnot$s]\label{lem:inversione}
If $\vdash_{\mathrm{cf}} \lnot\alpha, \Gamma \segue \Delta$ then $\vdash_{\mathrm{cf}} \Gamma \segue \Delta, \alpha$.
\end{lem}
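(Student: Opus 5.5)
The plan is an induction on the height of a cut-free derivation $\mathcal D$ of $\lnot\alpha, \Gamma \segue \Delta$. We prove the slightly more general claim in which sequents are sets, so that $\lnot\alpha$ may or may not belong to $\Gamma$ as well; in the latter case the conclusion follows by the same argument with $\lnot\alpha$ kept in the context. We first apply Lemma~\ref{lem:puri} to put $\mathcal D$ in pure-parameter form, with proper parameters avoiding the constants of $\alpha$, $\Gamma$, $\Delta$. This ensures that critical side conditions survive when we alter the context.

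The cases split according to the last rule of $\mathcal D$.

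\emph{Axiom.} The endsequent is $\lnot\alpha \segue \lnot\alpha$, so $\Gamma = \varnothing$ and $\Delta = \{\lnot\alpha\}$. We need $\segue \lnot\alpha, \alpha$ cut-free. Start from $\alpha \segue \alpha$, apply $\lnot$d to get $\segue \alpha, \lnot\alpha$, and this is exactly what is required.

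\emph{Last rule $\lnot$s with principal formula $\lnot\alpha$.} The premiss is $\Gamma \segue \Delta, \alpha$ or, if $\lnot\alpha$ was also in the premiss context, $\lnot\alpha, \Gamma \segue \Delta, \alpha$. In the first subcase we are done. In the second we apply the induction hypothesis to get $\Gamma \segue \Delta, \alpha, \alpha = \Gamma \segue \Delta, \alpha$.

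\emph{Last rule Ws introducing $\lnot\alpha$.} The premiss is $\Gamma \segue \Delta$, and Wd adds $\alpha$ on the right.

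\emph{Any other rule.} Here $\lnot\alpha$ is in the context of every premiss, possibly after a Ws that introduced something else. We apply the induction hypothesis to each premiss, which is legitimate because the heights are smaller, and then reapply the same rule with $\alpha$ added to the right context. For $\forall$d and $\exists$s the critical condition requires the proper parameter $p$ not to occur in the new conclusion. This holds because $p$ avoids the constants of $\alpha$ by the pure-parameter normalization.

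The main obstacle is exactly this critical case, together with the bookkeeping for sets. When $\lnot\alpha$ coincides with a formula already principal or present, the set notation collapses occurrences, so we must check that the inductive claim is stated for the version where $\lnot\alpha$ might remain on the left. We handle it by proving the statement \emph{if $\vdash_{\mathrm{cf}} \lnot\alpha, \Gamma \segue \Delta$ then $\vdash_{\mathrm{cf}} \Gamma\setminus\{\lnot\alpha\} \segue \Delta, \alpha$} and noting that reinserting $\lnot\alpha$ needs only Ws. If instead we wanted height preservation, we would also need to observe that Wd on $\alpha$ can be pushed up to the axioms. Height is not required here, so a single added Wd step is acceptable.
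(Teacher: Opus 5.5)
Your argument is correct and is essentially the paper's: an induction on the height of the cut-free derivation, with the same three base cases (axiom via $\lnot$d, Ws via Wd, principal $\lnot$s via its premiss), and every other last rule permuted below the induction hypothesis, with a Ws restoring $\lnot\alpha$ when it coincides with an active formula that the rule consumes. The only difference is your preliminary appeal to Lemma~\ref{lem:puri}, which is unnecessary: the proper parameter $p$ of a critical last rule is absent from that rule's conclusion, and this conclusion contains $\lnot\alpha$, so $p$ cannot occur in $\alpha$.
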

\begin{proof}
If $\lnot\alpha \in \Gamma$ the required sequent follows from the hypothesis by Wd, so we may assume $\lnot\alpha \notin \Gamma$, the comma denoting set-theoretic union. We induct on the height of a cut-free derivation $\mathcal D$, by cases on its last rule.

If $\mathcal D$ is an axiom $\theta \segue \theta$, the antecedent is $\{\theta\}$ and contains $\lnot\alpha$, so $\theta = \lnot\alpha$, $\Gamma = \varnothing$, $\Delta = \{\lnot\alpha\}$; the sequent $\segue \lnot\alpha, \alpha$ is derived from $\alpha \segue \alpha$ by $\lnot$d. If the last rule is Ws introducing $\lnot\alpha$, its premiss is $\Gamma \segue \Delta$ and Wd yields the claim. If it is $\lnot$s with principal formula $\lnot\alpha$, its premiss is already $\Gamma \segue \Delta, \alpha$.

In every other case the principal formula of the last rule $R$ differs from $\lnot\alpha$, and $\lnot\alpha$ occurs in the antecedent of every premiss of $R$; apply the induction hypothesis to each premiss and re-apply $R$. Two points need attention. Antecedents being sets, $\lnot\alpha$ may coincide with an active formula that $R$ consumes -- with a component $\beta_1$ of a principal $\beta_1\land\beta_2$ of $\land$s, say; in that case one application of Ws restores the missing occurrence before $R$ is re-applied, and the final antecedent is the required one, duplicates collapsing. And if $R$ is critical with proper parameter $p$, the side condition demands that $p$ not occur in $\Gamma \setminus \{\lnot\alpha\} \segue \Delta, \alpha$: it does not occur in $\Gamma$ or $\Delta$ by the original side condition, and it does not occur in $\alpha$ because $\lnot\alpha$ belonged to the original conclusion, from which $p$ was absent.
\end{proof}

\subsection{The correspondence}

A block $\Pi$ is read as the sequent $\Pi \segue$, with empty succedent; a closed block $\{p, \lnot p\}$ then becomes $p, \lnot p \segue$, which is derivable from the basic sequent $p \segue p$ by $\lnot$s and weakening. Under this reading the two calculi simulate each other, and the simulation is what the following theorem asserts.

\begin{thm}[Correspondence]\label{thm:corrispondenza}
Let $\Pi = \Gamma \cup \lnot[\Delta]$ be the block associated with $\Gamma \segue \Delta$.
\begin{enumerate}
\item[\textup{(a)}] If there is a closed tableau for $\Pi$, then $\vdash_{\mathrm{cf}} \Gamma \segue \Delta$.
\item[\textup{(b)}] If $\vdash_{\mathrm{LK}} \Gamma \segue \Delta$ -- Cut allowed -- then there is a closed tableau for $\Pi$.
\end{enumerate}
\end{thm}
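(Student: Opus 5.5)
For (a) I would first prove the more convenient statement that a closed tableau for any block $\Pi$ yields $\vdash_{\mathrm{cf}} \Pi \segue$, by induction on the height of the tableau, which is finite by Lemma~\ref{lem:finitezza}. Afterwards I would move the negated formulae of $\lnot[\Delta]$ back to the succedent using Lemma~\ref{lem:inversione}, once for each $\psi \in \Delta$. At a closed leaf containing $p, \lnot p$, the sequent $p \segue p$ gives $p, \lnot p \segue$ by $\lnot$s, and Ws adds the rest of the block. At an inner node I read each rule upside down as a left rule of $\mathrm{LK}$:
\begin{itemize}
\item $\land$, $\lor$ and $\rightarrow$ become $\land$s, $\lor$s and $\rightarrow$s. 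For $\rightarrow$ the child $\Pi, \lnot\varphi \segue$ is first turned into $\Pi \segue \varphi$ by Lemma~\ref{lem:inversione}.
\item The negated rules are obtained by composing with $\lnot$s. For $\lnot\land$: apply Lemma~\ref{lem:inversione} to the children to get $\Pi \segue \varphi$ and $\Pi \segue \psi$, then $\land$d, then $\lnot$s.
\item For $\lnot\lor$ and $\lnot\rightarrow$: apply Lemma~\ref{lem:inversione} twice, then $\lor$d or $\rightarrow$d, then $\lnot$s.
\item For $\lnot\lnot$: the child $\Pi, \varphi \segue$ gives $\Pi \segue \lnot\varphi$ by $\lnot$d, and then $\lnot$s.
\item The $\gamma$ rules become $\forall$s (respectively $\exists$d followed by $\lnot$s). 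The principal formula is still present in the child, and it is absorbed because antecedents are sets.
\item The $\delta$ rules become $\exists$s (respectively $\forall$d followed by $\lnot$s). The freshness of $\aup$ for the branch gives the critical condition, since $\aup$ does not occur in the parent block.
\end{itemize}
There is a recurring bookkeeping point. When a component such as $\lnot\varphi$ already lies in the context $\Pi$, the set collapse means that Lemma~\ref{lem:inversione} must be applied on the assumption that the formula is not otherwise present. Where it is present, the case split inside that lemma, which uses Wd, handles it.

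For the final step of (a), from $\Gamma \cup \lnot[\Delta] \segue$ I apply Lemma~\ref{lem:inversione} to each $\lnot\psi$. The situation of Remark~\ref{oss:non-iniettiva} arises when $\lnot\psi \in \Gamma$. In that case the inversion removes $\lnot\psi$ from the antecedent, and I restore it with Ws, since it belongs to $\Gamma$. If this formula is also needed for a different $\psi'$, I carry on by induction on $|\Delta|$.

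For (b) I would induct on an $\mathrm{LK}$ derivation with Cut, showing that for every derivable sequent $\Gamma \segue \Delta$ the block $\Gamma \cup \lnot[\Delta]$ is refutable.
\begin{itemize}
\item The axiom $\alpha \segue \alpha$ gives $\{\alpha, \lnot\alpha\}$, which is refutable by Lemma~\ref{lem:chiusura-gen}.
\item Ws and Wd are handled by Lemma~\ref{lem:wk}.
\item $\lnot$s and $\lnot$d do not change the block at all, because both $\Gamma \segue \Delta, \alpha$ and $\lnot\alpha, \Gamma \segue \Delta$ map to $\Gamma \cup \lnot[\Delta] \cup \{\lnot\alpha\}$.
\item The left rules are simulated forwards by the corresponding block rule. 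The premisses of $\forall$s and $\exists$d contain the instance, and I weaken them with the principal formula, using Lemma~\ref{lem:wk}, so that the $\gamma$ rule applies. The instantiating constant must be a parameter of the branch; if it is not, I rename it to $a_0$ by Lemma~\ref{lem:sost}, as in Case~3a of Theorem~\ref{thm:taglio}.
\item The right rules become the negated block rules. For instance, $\land$d becomes $\lnot\land$ with children $\Pi, \lnot\varphi$ and $\Pi, \lnot\psi$.
\item For the critical rules, the proper parameter $a$ is absent from the conclusion. I apply the $\delta$ rule with a fresh witness $b$ and obtain the needed child from the premiss by Lemma~\ref{lem:sost}, replacing $a$ by $b$; this requires $a \neq a_0$, and if $a = a_0$ I first rename via Lemma~\ref{lem:puri}.
\item Cut: the premisses give refutable blocks $\Gamma \cup \lnot[\Delta] \cup \{\lnot\mu\}$ and $\{\mu\} \cup \Theta \cup \lnot[\Sigma]$. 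I weaken both to the common context $\Gamma \cup \Theta \cup \lnot[\Delta] \cup \lnot[\Sigma]$ by Lemma~\ref{lem:wk} and conclude by Theorem~\ref{thm:taglio}.
\end{itemize}
One subtlety in (b): a premiss may contain the principal formula's component already inside the context. This is harmless, because blocks are sets and the rule only needs the component present.

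I expect the main obstacle to be the interaction with the reserved constant $a_0$ and with parameter side conditions in both directions. In (a), a witness of the tableau must not clash with constants introduced by the ambient derivation, which is ensured by working branch by branch under the freshness discipline. In (b), a proper parameter might equal $a_0$, which Lemma~\ref{lem:sost} forbids renaming. I would first apply Lemma~\ref{lem:puri} with $X = \{a_0\}$ to the cut-free pieces, or more simply note that a critical $a_0$ can be renamed on the $\mathrm{LK}$ side by Lemma~\ref{lem:rinomina}. Failing that, I would strengthen the induction in (b) to ``refutable for every renaming of non-conclusion constants''.
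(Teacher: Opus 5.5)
Your plan follows the paper's proof in all essentials. In (a) you prove the strengthened claim $\vdash_{\mathrm{cf}} \Pi' \segue$ by induction on the height of the closed tableau. There the negated rules are obtained by composing right rules with $\lnot$s, after Lemma~\ref{lem:inversione} where needed, and the same lemma moves $\lnot[\Delta]$ back to the succedent. In (b) you simulate rule by rule, using Lemmas~\ref{lem:chiusura-gen} and \ref{lem:wk}, with Theorem~\ref{thm:taglio} absorbing Cut.

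One step fails as written. You assert that $\lnot$s \emph{and} $\lnot$d leave the block unchanged, but the sequents you cite are the premiss and conclusion of $\lnot$s only. For $\lnot$d the premiss $\alpha, \Gamma \segue \Delta$ has block $\{\alpha\} \cup \Gamma \cup \lnot[\Delta]$. The conclusion $\Gamma \segue \Delta, \lnot\alpha$ has block $\Gamma \cup \lnot[\Delta] \cup \{\lnot\lnot\alpha\}$. So the tableau supplied by the induction hypothesis is not a tableau for the conclusion. The repair is one application of $(\lnot\lnot)$ at the root, whose child is the premiss block; this is exactly what the paper does.

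In the quantifier cases you depart from the paper, partly to your advantage.
\begin{itemize}
\item For $\forall$s and $\exists$d, suppose the instantiating constant $a$ is not a parameter of the conclusion block. You rename on the tableau side, $a \mapsto a_0$ by Lemma~\ref{lem:sost}, as in Case~3a of Theorem~\ref{thm:taglio}. This is legitimate, since then $a \neq a_0$ and $a$ occurs in none of $\Gamma$, $\Delta$, $\alpha(x)$. The paper instead renames inside the premiss derivation with Lemmas~\ref{lem:puri} and \ref{lem:rinomina}. Those lemmas are stated for cut-free derivations, whereas in (b) Cut is allowed, so your route avoids that mismatch.
\item For the critical rules, the detour through a new witness $b$ is unnecessary. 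When $a \neq a_0$ the critical condition already makes $a$ itself fresh at the root, and the paper simply takes $\aup := a$.
\item For $a = a_0$ you fall back, like the paper, on Lemmas~\ref{lem:puri} and \ref{lem:rinomina}. The hedging in your last paragraph is not needed, because both lemmas hold verbatim for derivations with Cut, which carries no side condition on parameters.
\item Alternatively, for $a = a_0$ you can stay on the tableau side. The restriction $a \neq a_0$ in Lemma~\ref{lem:sost} only matters when $a_0$ is a parameter by reservation alone. Here, when $x$ is free in $\alpha$, $a_0$ occurs in the premiss block, for instance $\{\alpha(a_0)\} \cup \Gamma \cup \lnot[\Delta]$ for $\exists$s. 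The same proof then carries $a_0$ to a fresh $b$: every $\gamma$ step on $a_0$ becomes one on $b$, which occurs in the renamed root. When $x$ is not free in $\alpha$ there is nothing to rename.
\end{itemize}
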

\begin{proof}
(a) We prove that a block $\Pi'$ with a closed tableau satisfies $\vdash_{\mathrm{cf}} \Pi' \segue$. Statement (a) follows: from $\vdash_{\mathrm{cf}} \Gamma \cup \lnot[\Delta] \segue$, successive applications of Lemma~\ref{lem:inversione} to $\lnot\psi_1,\ldots,\lnot\psi_m$ move the $\psi_j$ to the succedent and yield $\vdash_{\mathrm{cf}} \Gamma \setminus \lnot[\Delta] \segue \Delta$; if some $\lnot\psi_j$ belonged to $\Gamma$ as well -- the situation of Remark~\ref{oss:non-iniettiva} -- finitely many applications of Ws restore it, and $\vdash_{\mathrm{cf}} \Gamma \segue \Delta$.

The claim is proved by induction on the height of the closed tableau $T$ for $\Pi'$. If $T$ has height $0$, then $\Pi'$ contains a complementary pair of literals $p, \lnot p$: from $p \segue p$ one application of $\lnot$s gives $\lnot p, p \segue$, and finitely many applications of Ws reconstruct the context $\Pi' \segue$. Otherwise let the rule applied at the root have principal formula $\varphi_0$, and put $\Pi_0 = \Pi' \setminus \{\varphi_0\}$; the subtableaux rooted at the children are closed and of smaller height, so the induction hypothesis applies to them. The eleven cases are:
\begin{enumerate}
\item[$(\land)$] IH: $\vdash_{\mathrm{cf}} \varphi, \psi, \Pi_0 \segue$. By $\land$s: $\varphi\land\psi, \Pi_0 \segue$.
\item[$(\lnot\lnot)$] IH: $\vdash_{\mathrm{cf}} \varphi, \Pi_0 \segue$. By $\lnot$d: $\Pi_0 \segue \lnot\varphi$; by $\lnot$s: $\lnot\lnot\varphi, \Pi_0 \segue$.
\item[$(\lnot\lor)$] IH: $\vdash_{\mathrm{cf}} \lnot\varphi, \lnot\psi, \Pi_0 \segue$. By Lemma~\ref{lem:inversione} twice: $\Pi_0 \segue \varphi, \psi$; by $\lor$d and $\lnot$s: $\lnot(\varphi\lor\psi), \Pi_0 \segue$.
\item[$(\lnot\rightarrow)$] IH: $\vdash_{\mathrm{cf}} \varphi, \lnot\psi, \Pi_0 \segue$. By Lemma~\ref{lem:inversione}: $\varphi, \Pi_0 \segue \psi$; by $\rightarrow$d and $\lnot$s: $\lnot(\varphi\rightarrow\psi), \Pi_0 \segue$.
\item[$(\lor)$] IH: $\vdash_{\mathrm{cf}} \varphi, \Pi_0 \segue$ and $\vdash_{\mathrm{cf}} \psi, \Pi_0 \segue$. By $\lor$s.
\item[$(\lnot\land)$] IH: $\vdash_{\mathrm{cf}} \lnot\varphi, \Pi_0 \segue$ and $\vdash_{\mathrm{cf}} \lnot\psi, \Pi_0 \segue$. By Lemma~\ref{lem:inversione}, $\land$d and $\lnot$s.
\item[$(\rightarrow)$] IH: $\vdash_{\mathrm{cf}} \lnot\varphi, \Pi_0 \segue$ and $\vdash_{\mathrm{cf}} \psi, \Pi_0 \segue$. By Lemma~\ref{lem:inversione} on the first and $\rightarrow$s with the second.
\item[$(\forall)$] IH: $\vdash_{\mathrm{cf}} \varphi[x/\adown], \forall x\,\varphi, \Pi_0 \segue$. By $\forall$s with context $\{\forall x\,\varphi\}\cup\Pi_0$ and instantiating constant $\adown$, the conclusion being $\forall x\,\varphi, \forall x\,\varphi, \Pi_0 \segue$, that is $\forall x\,\varphi, \Pi_0 \segue$. The implicit contraction of the set-theoretic reading is the counterpart of the persistence of the principal formula in the $\gamma$ rule.
\item[$(\lnot\exists)$] IH: $\vdash_{\mathrm{cf}} \lnot\exists x\,\varphi, \lnot\varphi[x/\adown], \Pi_0 \segue$. By Lemma~\ref{lem:inversione}, $\exists$d and $\lnot$s, with the same collapse of the duplicate.
\item[$(\exists)$] IH: $\vdash_{\mathrm{cf}} \varphi[x/\aup], \Pi_0 \segue$. By $\exists$s with proper parameter $\aup$: its side condition demands that $\aup$ be absent from $\exists x\,\varphi, \Pi_0 \segue$, and this is exactly the freshness of $\aup$ for the branch, which contains the root $\Pi' = \Pi_0 \cup \{\exists x\,\varphi\}$.
\item[$(\lnot\forall)$] IH: $\vdash_{\mathrm{cf}} \lnot\varphi[x/\aup], \Pi_0 \segue$. By Lemma~\ref{lem:inversione}, then $\forall$d with proper parameter $\aup$, licit by freshness, then $\lnot$s.
\end{enumerate}
No step employs Cut, so the derivations produced are cut-free.

\smallskip
(b) By induction on the height of a derivation $\mathcal D$ of $\Gamma \segue \Delta$ in $\mathrm{LK}$, we construct a closed tableau for $\Pi$. If $\mathcal D$ is the axiom $\alpha \segue \alpha$, then $\Pi = \{\alpha,\lnot\alpha\}$, which is refutable by Lemma~\ref{lem:chiusura-gen}. Otherwise we distinguish according to the last rule.
\begin{enumerate}
\item[(Ws)] From $\Gamma \segue \Delta$ to $\alpha, \Gamma \segue \Delta$: the induction hypothesis gives a closed tableau for $\Gamma \cup \lnot[\Delta]$, and Lemma~\ref{lem:wk} with $\Sigma = \{\alpha\}$ one for the required block. (Wd) is symmetric, with $\Sigma = \{\lnot\alpha\}$.
\item[($\lnot$s)] From $\Gamma \segue \Delta, \alpha$ to $\lnot\alpha, \Gamma \segue \Delta$: the block of the conclusion is identical to the block of the premiss, and the tableau of the induction hypothesis already serves.
\item[($\lnot$d)] From $\alpha, \Gamma \segue \Delta$ to $\Gamma \segue \Delta, \lnot\alpha$: apply $(\lnot\lnot)$ at the root of $\Gamma \cup \lnot[\Delta] \cup \{\lnot\lnot\alpha\}$, obtaining the block of the premiss.
\item[($\land$s), ($\lor$d), ($\rightarrow$d)] One-premiss rules, matched respectively by $(\land)$, $(\lnot\lor)$, $(\lnot\rightarrow)$: in each case the unique child is the block of the premiss.
\item[($\land$d), ($\lor$s), ($\rightarrow$s)] Two-premiss rules, matched respectively by $(\lnot\land)$, $(\lor)$, $(\rightarrow)$: the two children are the blocks of the two premisses.
\item[($\forall$s)] From $\alpha(a), \Gamma \segue \Delta$ to $\forall x\,\alpha(x), \Gamma \segue \Delta$; the required block is $B := \{\forall x\,\alpha\} \cup \Gamma \cup \lnot[\Delta]$. If $x$ is not free in $\alpha$ then $\alpha(a) = \alpha = \alpha(b)$ for every $b$, and the argument goes through with any admissible parameter. Choose $b := a$ if $a$ occurs in $B$; otherwise let $b$ be any constant occurring in $B$, or $a_0$ if $B$ contains none. In the second case $a$ occurs in none of $\Gamma, \Delta, \alpha(x)$; Lemma~\ref{lem:puri} puts the premiss subderivation in pure form with proper parameters avoiding $\{a,b\}$, and Lemma~\ref{lem:rinomina} turns it into a derivation of $\alpha(b), \Gamma \segue \Delta$ of the same height. The induction hypothesis applied to that derivation yields a closed tableau for $\{\alpha(b)\}\cup\Gamma\cup\lnot[\Delta]$, and Lemma~\ref{lem:wk} extends it to one for $\{\forall x\,\alpha, \alpha(b)\}\cup\Gamma\cup\lnot[\Delta]$. Applying at the root $B$ the $\gamma$ rule $(\forall)$ with $\adown = b$ -- licit, $b$ being a parameter of the root branch by construction -- produces exactly that block.
\item[($\exists$d)] Mirror image of the previous case, with $(\lnot\exists)$ in place of $(\forall)$.
\item[($\exists$s)] Critical, from $\alpha(p), \Gamma \segue \Delta$ to $\exists x\,\alpha(x), \Gamma \segue \Delta$ with $p$ absent from the conclusion. Apply at the root the $\delta$ rule $(\exists)$ with $\aup := p$: freshness demands that $p$ occur nowhere in the block, which is exactly the critical condition, and the unique child is the block of the premiss. If $p = a_0$, Lemmas~\ref{lem:puri} and \ref{lem:rinomina} first replace it by a constant distinct from $a_0$ and absent from the conclusion.
\item[($\forall$d)] Critical, symmetric, with the $\delta$ rule $(\lnot\forall)$ and the same proviso on $a_0$.
\item[(Cut)] From $\Gamma \segue \Delta, \mu$ and $\mu, \Theta \segue \Sigma$ to $\Gamma, \Theta \segue \Delta, \Sigma$. The blocks of the premisses are $\Gamma \cup \lnot[\Delta] \cup \{\lnot\mu\}$ and $\Theta \cup \lnot[\Sigma] \cup \{\mu\}$, and the block of the conclusion is $\Pi = \Gamma \cup \Theta \cup \lnot[\Delta] \cup \lnot[\Sigma]$. The induction hypothesis provides closed tableaux for the two former; Lemma~\ref{lem:wk} enlarges them to closed tableaux for $\Pi \cup \{\lnot\mu\}$ and $\Pi \cup \{\mu\}$, and Theorem~\ref{thm:taglio} yields a closed tableau for $\Pi$.
\end{enumerate}
In every case the tableau constructed is closed.
\end{proof}

\begin{cor}[Hauptsatz for $\mathrm{LK}$, by transfer]\label{cor:hauptsatz}
If $\vdash_{\mathrm{LK}} \Gamma \segue \Delta$ then $\vdash_{\mathrm{cf}} \Gamma \segue \Delta$.
\end{cor}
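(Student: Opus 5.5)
The plan is to compose the two directions of Theorem~\ref{thm:corrispondenza}. Suppose $\vdash_{\mathrm{LK}} \Gamma \segue \Delta$, where the derivation may use Cut. Part~(b) of Theorem~\ref{thm:corrispondenza} then gives a closed tableau for the associated block $\Pi = \Gamma \cup \lnot[\Delta]$. Its Cut case is handled by weakening the blocks of the two premisses to $\Pi \cup \{\mu\}$ and $\Pi \cup \{\lnot\mu\}$ (Lemma~\ref{lem:wk}) and then applying the block-level cut of Theorem~\ref{thm:taglio}. Part~(a) converts this closed tableau back into a derivation of $\Gamma \segue \Delta$. That derivation is cut-free: every step of (a) uses only logical rules, weakening, the axiom, and Lemma~\ref{lem:inversione}, and that lemma itself preserves cut-freeness. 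Hence $\vdash_{\mathrm{cf}} \Gamma \segue \Delta$.

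The only point to check is that the composition is not circular. Theorem~\ref{thm:taglio} is proved entirely inside the block calculus, from Lemmas~\ref{lem:chiusura-gen}, \ref{lem:sost}, \ref{lem:wk} and \ref{lem:inv}. None of these refers to $\mathrm{LK}$ or to cut elimination in $\mathrm{LK}$. Likewise, Lemmas~\ref{lem:rinomina}, \ref{lem:puri} and \ref{lem:inversione} concern only cut-free derivations, and are applied in (a) and in the $\forall$s/$\exists$d cases of (b) only to cut-free subderivations. In the $\forall$s case of (b) the premiss subderivation may contain Cut, whereas Lemmas~\ref{lem:puri} and \ref{lem:rinomina} are stated for cut-free derivations. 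If that gap is real, there are two ways to close it. One is to extend the renaming lemma to derivations with Cut: its proof goes through verbatim, since Cut has no side condition and commutes with uniform replacement of constants. The other, simpler, is to avoid renaming at the $\mathrm{LK}$ level altogether. Apply the induction hypothesis of (b) to the premiss $\alpha(a), \Gamma \segue \Delta$ as it stands, obtaining a closed tableau for $\{\alpha(a)\} \cup \Gamma \cup \lnot[\Delta]$, and then use Lemma~\ref{lem:sost} to replace $a$ by an admissible parameter $b$ on the tableau side. This is legitimate because $a$ is absent from $\Gamma$, $\Delta$ and $\alpha(x)$, and it can be done whenever $a \neq a_0$; when $a = a_0$ the constant $a_0$ is already a parameter and no renaming is needed.

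I expect this check, that (b) genuinely handles premisses with cuts, to be the main obstacle. Everything else is an immediate chaining of the two parts. Since the transformation in (a) is effective and that in Theorem~\ref{thm:taglio} is too, the resulting cut elimination is also effective.
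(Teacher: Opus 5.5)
Your proposal is correct and is exactly the paper's proof: compose Theorem~\ref{thm:corrispondenza}(b), with Cut absorbed through Theorem~\ref{thm:taglio}, with Theorem~\ref{thm:corrispondenza}(a), whose output is cut-free. Your side remark is also right: the paper's proof of (b) does apply Lemmas~\ref{lem:puri} and \ref{lem:rinomina}, which are stated only for cut-free derivations, to premiss subderivations that may contain Cut. The same thing happens in the critical cases $\exists$s/$\forall$d when $p = a_0$, and there only your first remedy (extending both lemmas to derivations with Cut) works, because Lemma~\ref{lem:sost} requires the replaced constant to differ from $a_0$.
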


That the Hauptsatz for tableaux implies the Hauptsatz for Gentzen systems, by constructive translation in both directions, is Smullyan's observation \cite[p.~111]{Smullyan1968}; what the proof below adds is the translation for $\mathrm{LK}$ in the presentation of Section~\ref{sec:LK}, with the axiom $\alpha \segue \alpha$ and explicit weakening.
\begin{proof}
By Theorem~\ref{thm:corrispondenza}(b) the block $\Pi = \Gamma \cup \lnot[\Delta]$ has a closed tableau, and by Theorem~\ref{thm:corrispondenza}(a) that tableau translates into a cut-free derivation of $\Gamma \segue \Delta$.
\end{proof}

The route is worth describing, since it is the one along which the correspondence stops being an equivalence statement and becomes an instrument. A derivation with cut is translated into a tableau, cut being absorbed by Theorem~\ref{thm:taglio}; the tableau, having no cut to begin with, is translated back into a derivation, which is therefore cut-free. Every step is finitary: Theorem~\ref{thm:taglio} is proved syntactically, and no appeal is made to soundness, to completeness, or to K\"onig's lemma. Two other routes to the same conclusion are worth setting beside it. One is semantic and takes three lines -- cut-free $\mathrm{LK}$ is complete and $\mathrm{LK}$ with cut is sound for the classical semantics, so a derivable sequent is valid and therefore derivable without cut -- but it produces no derivation, only the knowledge that one exists \cite[Ch.~1]{Franks2026}. The other is Gentzen's own, an algorithm internal to $\mathrm{LK}$ that rewrites a derivation until every cut has disappeared \cite{Gentzen1935, Buss1998}. The route through blocks is constructive like the second and short like the first; what it costs is the detour through a second calculus. The same pattern -- eliminate the cut where it is cheapest to eliminate, then transfer -- governs the propositional treatment of the companion paper \cite{cuconato-proofteoria}, where cut elimination for a $\mathrm{G0}$-style calculus is obtained by passing through its $\mathrm{G3}$-style counterpart.

\begin{rem}[Correspondence, not isomorphism]\label{oss:non-iso}
Theorem~\ref{thm:corrispondenza} establishes a simulation preserving derivability in both directions; it does not assert a structure-preserving bijection between closed tableaux and $\mathrm{LK}$ derivations. The map is not injective, for three reasons. The structural rules of weakening remain visible in $\mathrm{LK}$ while they are absorbed in the block calculus, so that several derivations differing only by structural steps collapse onto one tableau. The order of application of the rules is free on both sides, and generates several derivations for one refutation. And in Gentzen's original formulation antecedents are lists, with contraction and exchange explicit, whereas blocks are sets that forget multiplicities and order -- a collapse that our set-based presentation of $\mathrm{LK}$ has already partly performed. A bijection would require normal forms on both sides: cut-free derivations free of explicit weakenings and with a fixed discipline on the order of principal formulae, and tableaux built under the same discipline. Whether that discipline can be given so as to make the bijection canonical we leave open. That the goal is attainable in a neighbouring case is known: von Plato \cite{vonPlato2001} exhibits an isomorphism between cut-free sequent derivations and normal natural deductions with general elimination rules, and it is that result which fixes the standard the present correspondence does not yet meet.
\end{rem}

\subsection{The block calculus and the $\mathrm{G3}$ family}\label{sec:G3}

The comparison with $\mathrm{LK}$ has a cost that is worth naming. In $\mathrm{LK}$ weakening is a primitive rule and the axioms are of the form $\alpha \segue \alpha$ without context; it is for this reason that the passage from $\mathrm{LK}$ to tableaux needs Lemmas~\ref{lem:rinomina}--\ref{lem:inversione}, which are pure bookkeeping about parameters and about the inversion of $\lnot$s. In a $\mathrm{G3}$-style calculus, with initial sequents $p, \Gamma \segue p, \Delta$ carrying an arbitrary context and with weakening and contraction height-preserving admissible \cite{NegriVonPlato2001}, those lemmas become citations rather than proofs. The invertible, context-sharing form of the operational rules on which such calculi rest goes back to Ketonen \cite{Ketonen1944}, and the simplification it brings to the elimination of cut is by now standard \cite{Buss1998, Franks2026}.

The affinity is not superficial. Section~\ref{sec:strutturale} establishes for blocks exactly the list of properties that characterizes the $\mathrm{G3}$ calculi, and in the same order of dependence: closure on literals with generalized closure derivable, height-preserving admissibility of weakening and of substitution, height-preserving invertibility of the rules, and admissibility of cut by double induction on the degree of the cut formula and on the sum of the heights. The block calculus is, from this angle, the empty-succedent fragment of a $\mathrm{G3}$-style calculus for classical logic read as a refutation procedure, and the persistence of the principal formula in the $\gamma$ rules is what stands in that fragment for height-preserving admissible contraction on the left. The propositional case of this identification, together with the corresponding natural deduction system and its normalization theorem, is developed in \cite{cuconato-proofteoria}.

\subsection{The translation at work}

We illustrate the translation on a propositional case, which carries over unchanged to first-order logic; the common tree format of the two displays is what makes the inversion of orientation visible. Consider $\varphi \lor \psi \segue \lnot\varphi \rightarrow \psi$, whose block tableau is

\begin{prooftree}
\AxiomC{$\varphi \lor \psi, \; \lnot(\lnot\varphi \rightarrow \psi)$ \quad $\Pi$}
\RightLabel{$\lnot\rightarrow$}
\UnaryInfC{$\varphi \lor \psi, \; \lnot\varphi, \; \lnot\psi$}
\RightLabel{$\lor$}
\UnaryInfC{$\{\varphi, \lnot\varphi, \lnot\psi\} \times \quad \mid \quad \{\psi, \lnot\varphi, \lnot\psi\} \times$}
\end{prooftree}

\noindent Replacing each block by the corresponding sequent and inverting the tree, one obtains the derivation

\begin{prooftree}
\AxiomC{$\varphi \segue \varphi, \psi$ \; $(\mathrm{Ax})^{\text{Wd}}$}
\RightLabel{$\lnot$s}
\UnaryInfC{$\lnot\varphi, \varphi \segue \psi$}
\AxiomC{$\psi \segue \psi, \varphi$ \; $(\mathrm{Ax})^{\text{Wd}}$}
\RightLabel{$\lnot$s}
\UnaryInfC{$\lnot\varphi, \psi \segue \psi$}
\RightLabel{$\lor$s}
\BinaryInfC{$\lnot\varphi, \varphi \lor \psi \segue \psi$}
\RightLabel{$\rightarrow$d}
\UnaryInfC{$\varphi \lor \psi \segue \lnot\varphi \rightarrow \psi$}
\end{prooftree}

\noindent The two trees have the same skeleton: a closed block becomes a basic sequent with possible weakening, a $\beta$ bifurcation becomes a two-premiss rule, an $\alpha$ or $\gamma$ step becomes a one-premiss rule. No step employs Cut. Table~\ref{tab:corrispondenza} records the dictionary item by item.

\begin{table}[htb]
{\small
\renewcommand{\arraystretch}{1.25}
\begin{tabular}{@{}p{48mm}@{\quad}c@{\quad}p{56mm}@{}}
\hline
\textbf{Block calculus} & & \textbf{Cut-free $\mathrm{LK}$} \\
\hline
block $\Pi=\Gamma\cup\lnot[\Delta]$ & $\longleftrightarrow$ & sequent $\Gamma\segue\Delta$, read one-sidedly as $\Pi\segue$ \\
closed block $\{p,\lnot p\}$ & $\longleftrightarrow$ & basic sequent $p\segue p$ (with $\lnot$s and weakening) \\
$\alpha$ step (one child) & $\longleftrightarrow$ & one-premiss rule (via $\lnot$s and its inversion) \\
$\beta$ step (bifurcation) & $\longleftrightarrow$ & two-premiss operational rule \\
$\gamma$ step, principal formula retained & $\longleftrightarrow$ & $\forall$s\,/\,$\exists$d with left contraction \\
$\delta$ step, fresh constant $\aup$ & $\longleftrightarrow$ & critical $\exists$s\,/\,$\forall$d, proper parameter \\
set structure of blocks, closure criterion & $\longleftrightarrow$ & weakening and contraction, absorbed \\
admissibility of cut (Thm~\ref{thm:taglio}) & $\longleftrightarrow$ & \emph{Hauptsatz} (Cor.~\ref{cor:hauptsatz}) \\
\hline
\end{tabular}}
\caption{The dictionary underlying Theorem~\ref{thm:corrispondenza}: a closed
tableau for $\Pi$ and a cut-free derivation of $\Gamma\segue\Delta$ are the same
tree read in opposite orientations. The last line records the transfer of
Corollary~\ref{cor:hauptsatz}.}
\label{tab:corrispondenza}
\end{table}

\section{Examples of derivation}\label{sec:esempi}

Four derivations follow. They are chosen for what they exhibit about the interaction of the quantifier rules, and not for the difficulty of the sequents they establish: an open branch and the model it describes, the reiteration of a $\gamma$ rule on a constant introduced later by a $\delta$ rule, the discipline of order among witnesses under nested quantifiers, and a construction that does not terminate, whose root fails the hypothesis of Theorem~\ref{thm:terminazione}.

\begin{con}[Derivation displays]\label{conv:display}
A derivation is displayed as a tree with the root block at the top; each rule application is drawn as an inference step whose label, to the right of the line, records the rule and, for the quantifier rules, the parameter chosen. Every node carries the entire current block, written out in full: no formula is elided and no ellipsis is employed. A terminal block is enclosed in braces and marked $\times$ if closed, $\odot$ if completed open, and a $\beta$ rule displays its two children side by side, separated by $\mid$. The convention is heavier than the usual one, in which contexts are abbreviated, and it is adopted deliberately: sequent derivations grow broad, and it is precisely the elided context that hides the side conditions on parameters, as von Plato observes in a related connection \cite{vonPlato2017}.
\end{con}

Throughout, $a_0$ is the reserved parameter of Definition~\ref{def:parametri} and the constants introduced by $\delta$ rules are named $b, c, d, \ldots$

\begin{exm}[Invalidity and openness of the block]\label{es:aperto}
The sequent $\exists x\, P(x) \segue \forall x\, P(x)$ is not valid.
\end{exm}
\begin{proof}
The associated block is $\Pi = \{\exists x\, P(x),\ \lnot\forall x\, P(x)\}$.
\begin{prooftree}
\AxiomC{$\exists x\, P(x), \; \lnot \forall x\, P(x)$ \quad $\Pi$}
\RightLabel{$\exists$, \; $\aup=b$}
\UnaryInfC{$\lnot \forall x\, P(x), \; P(b)$}
\RightLabel{$\lnot\forall$, \; $\aup=c$, \; $c \neq b$}
\UnaryInfC{$\{P(b), \; \lnot P(c)\} \; \odot$}
\end{prooftree}
Both formulae of $\Pi$ are of type $\delta$ and require, by Definition~\ref{def:nuova}, two distinct constants: reusing $b$ is not permitted (Remark~\ref{oss:novita}). The terminal block contains no complementary pair, $P(b)$ and $\lnot P(c)$ being distinct atoms, and it contains only literals, so no rule applies to it: the block is completed open, and the tableau displayed is fair. By Theorem~\ref{thm:completezza}, read in contraposition, $\Pi$ is satisfiable. The Hintikka model of the open branch confirms it: $D = \mathrm{Par}(\mathcal R) = \{a_0, b, c\}$ with each constant interpreted as itself and $i(P) = \{b\}$ satisfies $\exists x\,P(x)$ and falsifies $\forall x\,P(x)$.
\end{proof}

\begin{exm}[The drinker paradox]\label{es:bevitore}
The sequent $\segue \exists x\,(P(x) \rightarrow \forall y\, P(y))$ is valid.
\end{exm}
\begin{proof}
The associated block is $\Pi = \{\lnot\exists x\,(P(x)\rightarrow\forall y\,P(y))\}$. Since $\Pi$ contains no individual constant, the first $\gamma$ instantiation takes place on the reserved parameter $a_0$.
\begin{prooftree}
\AxiomC{$\lnot\exists x\,(P(x) \rightarrow \forall y\, P(y))$ \quad $\Pi$}
\RightLabel{$\lnot\exists$, \; $\adown=a_0$}
\UnaryInfC{$\lnot\exists x\,(P(x) \rightarrow \forall y\, P(y)), \; \lnot(P(a_0) \rightarrow \forall y\, P(y))$}
\RightLabel{$\lnot\rightarrow$}
\UnaryInfC{$\lnot\exists x\,(P(x) \rightarrow \forall y\, P(y)), \; P(a_0), \; \lnot\forall y\, P(y)$}
\RightLabel{$\lnot\forall$, \; $\aup=b$}
\UnaryInfC{$\lnot\exists x\,(P(x) \rightarrow \forall y\, P(y)), \; P(a_0), \; \lnot P(b)$}
\RightLabel{$\lnot\exists$, \; $\adown=b$}
\UnaryInfC{$\lnot\exists x\,(P(x) \rightarrow \forall y\, P(y)), \; P(a_0), \; \lnot P(b), \; \lnot(P(b) \rightarrow \forall y\, P(y))$}
\RightLabel{$\lnot\rightarrow$}
\UnaryInfC{$\{\lnot\exists x\,(P(x) \rightarrow \forall y\, P(y)), \; P(a_0), \; \lnot P(b), \; P(b), \; \lnot\forall y\, P(y)\} \times$}
\end{prooftree}
The $\gamma$ formula is instantiated twice: first on $a_0$, then on the constant $b$ that the $\delta$ rule has introduced meanwhile. The second instantiation is the one that closes the branch. It places the negated implication on $b$ into the block, and the rule $(\lnot\rightarrow)$ extracts from it $P(b)$, the complement of the $\lnot P(b)$ already present. That the same application reintroduces $\lnot\forall y\, P(y)$, which the $\delta$ rule had consumed, is harmless, blocks being sets. Two features of the calculus are indispensable here: the persistence of the $\gamma$ formula, and the obligation (F2) to instantiate it on every parameter of the branch, those generated later included.
\end{proof}

\begin{exm}[Nested quantifiers: the valid direction]\label{es:annidati-si}
The sequent
\[
\exists x\,\forall y\, R(x,y) \segue \forall y\,\exists x\, R(x,y)
\]
is valid.
\end{exm}
\begin{proof}
The associated block is $\Pi = \{\exists x\,\forall y\, R(x,y),\ \lnot\forall y\,\exists x\, R(x,y)\}$.
\begin{prooftree}
\AxiomC{$\exists x\,\forall y\, R(x,y), \; \lnot\forall y\,\exists x\, R(x,y)$ \quad $\Pi$}
\RightLabel{$\exists$, \; $\aup=b$}
\UnaryInfC{$\forall y\, R(b,y), \; \lnot\forall y\,\exists x\, R(x,y)$}
\RightLabel{$\lnot\forall$, \; $\aup=c$}
\UnaryInfC{$\forall y\, R(b,y), \; \lnot\exists x\, R(x,c)$}
\RightLabel{$\forall$, \; $\adown=c$}
\UnaryInfC{$\forall y\, R(b,y), \; \lnot\exists x\, R(x,c), \; R(b,c)$}
\RightLabel{$\lnot\exists$, \; $\adown=b$}
\UnaryInfC{$\{\forall y\, R(b,y), \; \lnot\exists x\, R(x,c), \; R(b,c), \; \lnot R(b,c)\} \times$}
\end{prooftree}
The order is essential here, and it obeys the discipline by which a $\delta$ rule must precede the $\gamma$ rules that will draw on its constant. The two $\delta$ rules consume their principal formulae and introduce the distinct witnesses $b$ and $c$. Only then can the two $\gamma$ rules instantiate, on $c$ and on $b$ respectively, and produce the complementary pair.
\end{proof}

\begin{exm}[Nested quantifiers: the invalid direction]\label{es:annidati-no}
The sequent
\[
\forall y\,\exists x\, R(x,y) \segue \exists x\,\forall y\, R(x,y)
\]
is not valid.
\end{exm}
\begin{proof}
The associated block is $\Pi = \{\forall y\,\exists x\, R(x,y),\ \lnot\exists x\,\forall y\, R(x,y)\}$, both of whose formulae are of type $\gamma$; since $\Pi$ contains no constant, the construction starts from $a_0$. One fair schedule begins as follows.
\begin{prooftree}
\AxiomC{$\forall y\,\exists x\, R(x,y), \; \lnot\exists x\,\forall y\, R(x,y)$ \quad $\Pi$}
\RightLabel{$\forall$, \; $\adown=a_0$}
\UnaryInfC{$\forall y\,\exists x\, R(x,y), \; \lnot\exists x\,\forall y\, R(x,y), \; \exists x\, R(x,a_0)$}
\RightLabel{$\exists$, \; $\aup=b$}
\UnaryInfC{$\forall y\,\exists x\, R(x,y), \; \lnot\exists x\,\forall y\, R(x,y), \; R(b,a_0)$}
\RightLabel{$\lnot\exists$, \; $\adown=b$}
\UnaryInfC{$\forall y\,\exists x\, R(x,y), \; \lnot\exists x\,\forall y\, R(x,y), \; R(b,a_0), \; \lnot\forall y\, R(b,y)$}
\RightLabel{$\lnot\forall$, \; $\aup=c$}
\UnaryInfC{$\forall y\,\exists x\, R(x,y), \; \lnot\exists x\,\forall y\, R(x,y), \; R(b,a_0), \; \lnot R(b,c)$}
\RightLabel{$\forall$, \; $\adown=c$}
\UnaryInfC{$\forall y\,\exists x\, R(x,y), \; \lnot\exists x\,\forall y\, R(x,y), \; R(b,a_0), \; \lnot R(b,c), \; \exists x\, R(x,c)$}
\RightLabel{$\exists$, \; $\aup=d$}
\UnaryInfC{$\forall y\,\exists x\, R(x,y), \; \lnot\exists x\,\forall y\, R(x,y), \; R(b,a_0), \; \lnot R(b,c), \; R(d,c)$}
\noLine
\UnaryInfC{$\vdots$}
\end{prooftree}
The last block displayed contains the atoms $R(b,a_0)$, $\lnot R(b,c)$, $R(d,c)$, no two of which are complementary, since $b \neq d$ and $a_0 \neq c$. Nor can the construction halt: each $\delta$ application introduces a new parameter on which (F2) requires the two $\gamma$ formulae to be instantiated anew, and each instantiation of $\forall y\,\exists x\,R(x,y)$ generates a further existential, hence a further witness, while each instantiation of $\lnot\exists x\,\forall y\,R(x,y)$ generates a further negated universal. The branch grows indefinitely. The root, incidentally, is not $\delta$-stratified: the $\gamma$ formula $\forall y\,\exists x\,R(x,y)$ has the $\delta$ formula $\exists x\,R(x,y)$ among its subformulae, so Theorem~\ref{thm:terminazione} does not apply, and the example shows that its hypothesis cannot simply be dropped.

The display of an initial segment does not prove that no tableau for $\Pi$ closes; that is established semantically. Let $M = (D,i)$ with $D = \mathbb{N}$ and $i(R) = \{(m,n) \mid m > n\}$. Then $M \vDash \forall y\,\exists x\, R(x,y)$, the witness for $n$ being $n+1$, and $M \nvDash \exists x\,\forall y\, R(x,y)$, no natural number exceeding all the others, itself included. Hence $\Pi$ is satisfiable, and by Theorem~\ref{thm:correttezza} read in contraposition no tableau for $\Pi$ can close.
\end{proof}

\section{Concluding remarks}\label{sec:conclusioni}

The calculus of sequent-style tableaux, born in the propositional setting from the need to combine the readability of the sequent calculus with the refutational effectiveness of tableaux \cite{cuconato2025metodi}, extends to first-order logic without strain. Each node carries a block $\Pi$; each rule decomposes the principal formula and preserves the rest, save for the $\delta$ rules, which replace it by an instance on a fresh constant; a branch closes as soon as a complementary pair of literals appears. The addition of the $\gamma$ and $\delta$ rules requires that one account for the freshness condition, in its twofold syntactic and semantic reading, and for the non-reductive character of universal instantiation. Against this cost, the calculus gains the expressiveness of the language of predicates while remaining sound and complete for the semantics of Section~\ref{sec:preliminari}, analytic in the sense of Corollary~\ref{cor:analiticita}, and closed under cut.

That last point deserves to be separated from the others. A calculus without a cut rule is not thereby a calculus in which cut is superfluous, and the difference is the whole content of Gentzen's theorem. Theorem~\ref{thm:taglio}, which is Smullyan's, settles the matter on the side of blocks by a finitary argument, and Corollary~\ref{cor:hauptsatz} then returns the Hauptsatz for $\mathrm{LK}$ by transfer: a derivation with cut is translated into a tableau, where cut is absorbed, and the tableau is translated back into a derivation, which is cut-free because tableaux have nothing else to offer. It is here that the correspondence of Section~\ref{sec:LK} earns its place; without it, the two calculi would merely prove the same theorems.

On the plane of pure derivability the two systems are, as Theorem~\ref{thm:corrispondenza} shows, one combinatorial structure. Differences appear as soon as one moves beyond what is derivable. The sequent calculus is a calculus of synthesis: it builds the derivation upwards, and its natural reading is that of a proof being erected. The block calculus is a calculus of analysis and of search: it starts from the negation of what one wishes to establish and decomposes, in the attempt to exhibit a model, and when the attempt fails on every branch that failure \emph{is} the proof. The two directions describe distinct cognitive acts, even though the underlying structure is the same. A second difference concerns what each system makes explicit: in $\mathrm{LK}$ weakening is a visible step, in the block calculus it is absorbed into the data structure and into the closure criterion, and the choice of what to display and what to hide directs both the reader's attention and the measure of the complexity of derivations. What Hazen and Pelletier observed of Gentzen's and Ja\'skowski's natural deduction \cite{HazenPelletier2014} holds here too: two formulations that prove the same theorems may be importantly different once they are tested on ground for which neither was designed.

Analyticity, finally, binds the syntax to the semantics. Since no formula extraneous to the root can appear in a refutation, a branch that fails to close gathers exactly the formulae needed to construct a model; this is what Lemma~\ref{lem:hintikka} and Theorem~\ref{thm:completezza} establish and what Example~\ref{es:annidati-no} shows at work. To close every branch is to prove, to leave one open is to construct a model; it is this bridge between the proof and the model that it seeks or excludes that structural proof theory takes as its core \cite{NegriDirezioni, Poggiolesi2012}. It should not be concealed that analyticity has a price -- cut-free proofs represent mathematical reasoning, which composes lemmas rather than decomposing theses, imperfectly, and they may be very much longer \cite{Boolos1984, Statman1978} -- and it is in response to this that the line of analytic cut was developed \cite{DAgostinoMondadori1994}. Corollary~\ref{cor:taglio-analitico} shows where the block calculus stands on that question: analytic cut is available, and it is available as a theorem rather than as an addition to the rules.

It remains to say what is not attempted. Interpolation is not: Craig's lemma and Beth's theorem are obtained from block tableaux, through a Gentzen system with a strengthened subformula property, in \cite[Ch.~XIV--XV]{Smullyan1968}, and we have nothing to add to that treatment. What is left open is narrower, and it is the question of Remark~\ref{oss:non-iso}: which discipline on the order of principal formulae turns the simulation of Theorem~\ref{thm:corrispondenza} into a bijection, in the sense in which von Plato \cite{vonPlato2001} obtains one between cut-free sequent derivations and normal natural deductions with general elimination rules. The other direction we regard as more promising, and it lies outside this paper: the network of the companion work \cite{cuconato-proofteoria}, which relates the propositional block calculus to the sequent calculi $\mathrm{G0T}$ and $\mathrm{G3T}$ and to a natural deduction system with general elimination rules, has no first-order counterpart, and the normalization theorem on which it ends has not been extended to quantification theory.

\bibliographystyle{amsplain}

\begin{thebibliography}{30}

\bibitem{Boolos1984}
G. Boolos,
\emph{Don't eliminate cut},
J. Philos. Logic \textbf{13} (1984), 373--378.

\bibitem{Buss1998}
S.\,R. Buss,
\emph{An introduction to proof theory},
in: S.\,R. Buss (ed.), \emph{Handbook of Proof Theory}, North-Holland, Amsterdam, 1998, 1--78.

\bibitem{Church1936}
A. Church,
\emph{An unsolvable problem of elementary number theory},
Amer. J. Math. \textbf{58} (1936), 345--363.

\bibitem{cuconato2025metodi}
S. Cuconato,
\emph{Metodi Logici 1. Teoria degli Insiemi, Logica del Primo Ordine, Tableaux Analitici, Calcolo dei Sequenti},
Il Sileno Edizioni, Lago (CS), 2025.

\bibitem{cuconato-proofteoria}
S. Cuconato,
\emph{Proof theory for sequent-style tableaux: $\mathrm{G0}$- and $\mathrm{G3}$-style sequent calculi and full normalization},
manuscript.

\bibitem{DAgostinoMondadori1994}
M. D'Agostino and M. Mondadori,
\emph{The taming of the cut. Classical refutations with analytic cut},
J. Logic Comput. \textbf{4} (1994), 285--319.

\bibitem{Fitting1996}
M. Fitting,
\emph{First-Order Logic and Automated Theorem Proving},
2nd ed.,
Springer, New York, 1996.

\bibitem{Fitting1999}
M. Fitting,
\emph{Introduction},
in: M. D'Agostino, D.\,M. Gabbay, R. H\"ahnle, J. Posegga (eds.), \emph{Handbook of Tableau Methods}, Kluwer, Dordrecht, 1999, 1--43.

\bibitem{Franks2026}
C. Franks,
\emph{Gentzen's Logical Calculi (the Theory Pamphlet)},
SpringerBriefs Philos., Springer, Cham, 2026.

\bibitem{Galvan2015}
S. Galvan,
\emph{L'Hauptsatz di Gentzen},
EDUCatt, Milano, 2015.

\bibitem{Gentzen1935}
G. Gentzen,
\emph{Untersuchungen \"uber das logische Schlie{\ss}en I--II},
Math. Z. \textbf{39} (1935), 176--210, 405--431.

\bibitem{HazenPelletier2014}
A.~P. Hazen and F.~J. Pelletier,
\emph{Gentzen and Ja\'skowski natural deduction: Fundamentally similar but importantly different},
Stud. Log. \textbf{102} (2014), 1103--1142.

\bibitem{Ketonen1944}
O. Ketonen,
\emph{Untersuchungen zum Pr\"adikatenkalk\"ul},
Ann. Acad. Sci. Fenn. Ser. A I Math.-Phys. \textbf{23} (1944).

\bibitem{Konig1927}
D. K\"onig,
\emph{\"Uber eine Schlussweise aus dem Endlichen ins Unendliche},
Acta Sci. Math. (Szeged) \textbf{3} (1927), 121--130.

\bibitem{mancosu2021}
P. Mancosu, S. Galvan, and R. Zach,
\emph{An Introduction to Proof Theory: Normalization, Cut-Elimination, and Consistency Proofs},
Oxford University Press, Oxford, 2021.

\bibitem{NegriDirezioni}
S. Negri,
\emph{Teoria strutturale della dimostrazione},
in \emph{Le direzioni della ricerca logica in Italia},
H. Hosni, G. Lolli, and C. Toffalori (eds.),
Edizioni della Normale, Pisa, 2015,
221--253.

\bibitem{NegriVonPlato2001}
S. Negri and J. von Plato,
\emph{Structural Proof Theory},
Cambridge University Press, Cambridge, 2001.

\bibitem{Palladino2002}
D. Palladino,
\emph{Corso di logica. Introduzione elementare al calcolo dei predicati},
Carocci, Roma, 2002.

\bibitem{vonPlato2001}
J. von Plato,
\emph{Natural deduction with general elimination rules},
Arch. Math. Logic \textbf{40} (2001), 541--567.

\bibitem{vonPlato2017}
J. von Plato,
\emph{From Gentzen to Jaskowski and back: algorithmic translation of derivations between the two main systems of natural deduction},
Bull. Sect. Log. \textbf{46} (2017), 65--73.

\bibitem{Poggiolesi2012}
F. Poggiolesi,
\emph{On the importance of being analytic. The paradigmatic case of the logic of proofs},
Log. Anal. (N.S.) \textbf{55} (2012), 443--461.

\bibitem{Schuette1977}
K. Sch\"utte,
\emph{Proof Theory},
Springer, Berlin, 1977.

\bibitem{Simpson2009}
S.~G. Simpson,
\emph{Subsystems of Second Order Arithmetic},
2nd ed.,
Cambridge University Press, Cambridge, 2009.

\bibitem{Smullyan1968}
R.~M. Smullyan,
\emph{First-Order Logic},
Springer, Berlin, 1968.

\bibitem{Statman1978}
R. Statman,
\emph{Bounds for proof-search and speed-up in the predicate calculus},
Ann. Math. Logic \textbf{15} (1978), 225--287.

\bibitem{Tait1968}
W.~W. Tait,
\emph{Normal derivability in classical logic},
in \emph{The Syntax and Semantics of Infinitary Languages},
J. Barwise (ed.),
Lecture Notes in Math. 72,
Springer, Berlin, 1968,
204--236.

\bibitem{troelstra2000}
A.~S. Troelstra and H. Schwichtenberg,
\emph{Basic Proof Theory},
2nd ed.,
Cambridge University Press, Cambridge, 2000.

\bibitem{Turing1936}
A.~M. Turing,
\emph{On computable numbers, with an application to the Entscheidungsproblem},
Proc. London Math. Soc. (2) \textbf{42} (1936), 230--265;
correction, ibid. (2) \textbf{43} (1937), 544--546.

\bibitem{VanDalen2013}
D. van Dalen,
\emph{Logic and Structure},
5th ed.,
Springer, London, 2013.

\end{thebibliography}

\end{document}